%Layout Komafont+Pagestyle
\documentclass[fontsize=12pt,a4paper,headings=normal,
twoside=false,leqno,parskip=half-,abstract=true]{scrartcl}
\usepackage[english]{babel}
\usepackage[utf8]{inputenc}
%\setkomafont{sectioning}{\mdseries}
\setlength{\textwidth}{15.5cm}
\setlength{\textheight}{23.5cm}
\usepackage{hyperref}
\hypersetup{
 %bookmarks=true,
 pdftitle={Sturm 3-ball global attractors 3: Examples},
 pdfauthor={Bernold Fiedler, Carlos Rocha},
 colorlinks=true,
 linkcolor=blue,
 citecolor=blue,
 filecolor=blue,
 urlcolor=blue}
  
 \usepackage{afterpage}

%Grafiken
\usepackage{graphicx}
\usepackage[format=plain,labelfont=bf,font=small]{caption}
\usepackage{xcolor}
\usepackage[arrow, matrix, curve]{xy}
\usepackage{float}

%regulates font size of captions: normalsize, small=default, footnotesize, ...
\usepackage{caption}
\captionsetup{font=footnotesize}

%%Tabellen Zählung fortlaufend mit Bildern
%\def\table{\def\figurename{Table}\figure}
%\let\endtable\endfigure
\usepackage{tabulary}
\usepackage{array}
\newcolumntype{N}[1]{>{\centering\arraybackslash}m{#1}}

%Mathe
\usepackage{amsmath,amsthm}
\swapnumbers % Switch number/label style
\usepackage{amssymb} 
%for \pitchfork overlined
\newcommand{\transv}{\mathrel{\text{\tpitchfork}}}
\makeatletter
\newcommand{\tpitchfork}{%
  \vbox{
    \baselineskip\z@skip
    \lineskip-.52ex
    \lineskiplimit\maxdimen
    \m@th
    \ialign{##\crcr\hidewidth\smash{$-$}\hidewidth\crcr$\pitchfork$\crcr}
  }%
}
\makeatother
\usepackage{latexsym}

%%%%%%%%%%%%%%%%%%%%%%%% temporary cross-referencing
\usepackage[notref,notcite,color,final %make final instead of draft, by % %
%draft
]{showkeys}
%\usepackage{refcheck} %remove by % %
%\usepackage[pagewise]{lineno}\linenumbers %% for numbering of lines
%%%%%%%%%%%%%%%%%%%%%%%% temporary cross-referencing

\definecolor{refkey}{rgb}{1,0,0}
\definecolor{labelkey}{rgb}{1,0,0}

%TiKz
\usepackage{tikz}

%Notizen
%\usepackage[textwidth=2cm,textsize=small,backgroundcolor=none]{todonotes}

%Makro v. Donald Arseneau Doppelpunkt im mathmode generell auf mathematische Achse zentriert
  \mathchardef\ordinarycolon\mathcode`\:
  \mathcode`\:=\string"8000
  \begingroup \catcode`\:=\active
    \gdef:{\mathrel{\mathop\ordinarycolon}}
  \endgroup

\theoremstyle{plain}
\newtheorem{thm}{Theorem}[section]
\newtheorem{lem}[thm]{Lemma}

\newtheorem{prop}[thm]{Proposition}
\newtheorem{cor}[thm]{Corollary}
\newtheorem{defi}[thm]{Definition}

%Trennung
\hyphenation{non neg ative non zero non linear ity cur ves super critical ity homeo morphisms}

%%%%%%%%%%%%%%%%%%%%%%%%%%%%%%%%%%%%%%%%%%%%%%%%%%%%%%%%%%%

\begin{document}

\title{\LARGE{Boundary orders and  geometry\\
of the signed Thom-Smale complex\\
for Sturm global attractors}
\vspace{1cm}}
{\subtitle{	
	\vspace{1ex}
	{\large -- Dedicated to the dear memory of Geneviève Raugel --}}\vspace{1ex}
	}

\author{
 \\
%\emph{-- Dedicated to 	Waldyr M. Oliva, mentor and friend  --}\\
{~}\\
Bernold Fiedler* and Carlos Rocha**\\
\vspace{2cm}}

\date{version of \today}
\maketitle
\thispagestyle{empty}

\vfill

*\\
Institut für Mathematik\\
Freie Universität Berlin\\
Arnimallee 3\\ 
14195 Berlin, Germany\\
\\
**\\
Center for Mathematical Analysis, Geometry and Dynamical Systems\\
Instituto Superior T\'ecnico\\
Universidade de Lisboa\\
Avenida Rovisco Pais\\
1049--001 Lisbon, Portugal\\

%%%%%%%%%%%%%%%%%%%%%%%%%%%%%%%%%%%%%%%%%%%%%%%%%%%%%%%%%%%

\newpage
\pagestyle{plain}
\pagenumbering{roman}
\setcounter{page}{1}

\begin{abstract}
\noindent
We embark on a detailed analysis of the close relations between combinatorial and geometric aspects of the scalar parabolic PDE
	\begin{equation}\label{eq:*}
	u_t = u_{xx} + f(x,u,u_x) \tag{$*$}
	\end{equation}
on the unit interval $0 < x<1$ with Neumann boundary conditions.
We assume $f$ to be dissipative with $N$ hyperbolic equilibria $v\in\mathcal{E}$.
The global attractor $\mathcal{A}$ of \eqref{eq:*}, also called \emph{Sturm global attractor}, consists of the unstable manifolds of all equilibria $v$.
As cells, these form the \emph{Thom-Smale complex} $\mathcal{C}$.
\medskip

\noindent
Based on the fast unstable manifolds of $v$, we introduce a refinement $\mathcal{C}^s$ of the regular cell complex $\mathcal{C}$, which we call the \emph{signed Thom-Smale complex}.
Given the signed cell complex $\mathcal{C}^s$ and its underlying partial order, only, we derive the two total boundary orders $h_\iota:\{1,\ldots , N\}\rightarrow\mathcal{E}$ of the equilibrium values $v(x)$ at the two Neumann boundaries $\iota=x=0,1$.
In previous work we have already established how the resulting Sturm permutation 
\[\sigma:=h_{0}^{-1} \circ h_1,\]
conversely, determines the global attractor $\mathcal{A}$ uniquely, up to topological conjugacy.

\end{abstract}

%\newpage
\vspace{2cm}
\tableofcontents

%%%%%%%%%%%%%%%%%%%%%%%%%%%%%%%%%%%%%%%%%%%%%%%%%%%%%%%%%%%

\newpage
\pagenumbering{arabic}
\setcounter{page}{1}

\section{Introduction}
\label{sec1}

\numberwithin{equation}{section}
\numberwithin{figure}{section}
\numberwithin{table}{section}

For our general introduction we first follow \cite{firo3d-1, firo3d-2, firo3d-3} and the references there.
\emph{Sturm global attractors} $\mathcal{A}_f$ are the global attractors of scalar parabolic equations
	\begin{equation}
	u_t = u_{xx} + f(x,u,u_x)
	\label{eq:1.1}
	\end{equation}
on the unit interval $0<x<1$.
Just to be specific we consider Neumann boundary conditions $u_x=0$ at $x \in \{0,1\}$.
Standard theory of strongly continuous semigroups provides local solutions $u(t,x)$ in suitable Sobolev spaces $u(t, \cdot) \in X \subseteq C^1 ([0,1], \mathbb{R})$, for $t \geq 0$ and given initial data $u=u_0(x)$ at time $t=0$.

We assume the solution semigroup $u(t,\cdot)$ generated by the nonlinearity $f \in C^2$ to be \emph{dissipative}: there exists some large constant $C$, independent of initial conditions, such that any solution $u(t,\cdot)$ satisfies $\|u(t,\cdot)\|_X\leq C$ for all large enough times $t\geq t_0(u_0)$.
In other words, any solution $u(t,\cdot)$ exists globally in forward time $t\geq 0$, and eventually enters a fixed large ball in $X$.
Explicit sufficient, but by no means necessary, conditions on $f=f(x,u,p)$ which guarantee dissipativeness are sign conditions $f(x,u,0)\cdot u<0$, for large $|u|$, together with subquadratic growth in $|p|$.

For large times $t\rightarrow\infty$, the large attracting ball of radius $C$ in $X$ limits onto the maximal compact and invariant subset $\mathcal{A}=\mathcal{A}_f$ of $X$ which is called the \emph{global attractor}. 
Invariance refers to, both, forward and backward time.
In general, the global attractor $\mathcal{A}$ consists of all solutions $u(t,\cdot)$ which exist globally, for all positive and negative times $t\in\mathbb{R}$, and remain bounded in $X$.
See \cite{he81, pa83, ta79} for a general PDE background, and \cite{bavi92, chvi02, edetal94, ha88, haetal02, la91, ra02, seyo02, te88} for global attractors in general.

For Geneviève Raugel, in particular, global attractors were a main focus of interest.
Her beautiful survey \cite{ra02}, for example, puts our past and present work on scalar one-dimensional parabolic equations in a much broader perspective.

For the convenience of the reader, we provide a rather complete background on our current understanding of the global attractors of \eqref{eq:1.1}. 
It is not required, and would in fact be pedantic, to read all technical references given. Rather, the present paper is elementary, although nontrivial, given the background facts which we will now summarize.

Equilibria $u(t,x) = v(x)$ are time-independent solutions, of course, and hence satisfy the ODE
	\begin{equation}
	0 = v_{xx} + f(x,v,v_x)
	\label{eq:1.2}
	\end{equation} 
for $0\leq x \leq 1$, again with Neumann boundary.
Here and below we assume that all equilibria $v$ of \eqref{eq:1.1}, \eqref{eq:1.2} are \emph{hyperbolic}, i.e. without eigenvalues $\lambda=0$ of their Sturm-Liouville linearization
\begin{equation}
\label{eq:SL}
\lambda u = u_{xx} + f_p(x,v(x),v_x(x)) u_x + f_u(x,v(x),v_x(x)) u
\end{equation}
under Neumann boundary conditions.
We recall here that all eigenvalues $\lambda_0>\lambda_1>\ldots$ are algebraically simple and real.
The \emph{Morse index} $i(v)$ of $v$ counts the number of unstable eigenvalues $\lambda_j>0$.
In other words, the Morse index $i(v)$ is the dimension of the unstable manifold $W^u(v)$ of $v$.
Let $\mathcal{E} = \mathcal{E}_f \subseteq \mathcal{A}_f$ denote the set of equilibria.
Our generic hyperbolicity assumption and dissipativeness of $f$ imply that $N$:= $|\mathcal{E}_f|$ is odd.

It is known that \eqref{eq:1.1} possesses a Lyapunov~function, alias a variational or gradient-like structure, under separated boundary conditions;  see \cite{ze68, ma78, mana97, hu11, fietal14, lafi18}. 
In particular, the time invariant global attractor consists of equilibria and of solutions $u(t, \cdot )$, $t \in \mathbb{R}$, with forward and backward limits, i.e.
	\begin{equation}
	\underset{t \rightarrow -\infty}{\mathrm{lim}} u(t, \cdot ) = v\,,
	\qquad
	\underset{t \rightarrow +\infty}{\mathrm{lim}} u(t, \cdot ) = w\,.
	\label{eq:1.3}
	\end{equation}
In other words, the $\alpha$- and $\omega$-limit sets of $u(t,\cdot )$ are two distinct equilibria $v$ and $w$.
We call $u(t, \cdot )$ a \emph{heteroclinic} or \emph{connecting} orbit, or \emph{instanton},  and write $v \leadsto w$ for such heteroclinically connected equilibria. 
See fig.~\ref{fig:1.0}(a) for a modest 3-ball example with $N=9$ equilibria.

\begin{figure}[p!]
\centering \includegraphics[width=0.9\textwidth]{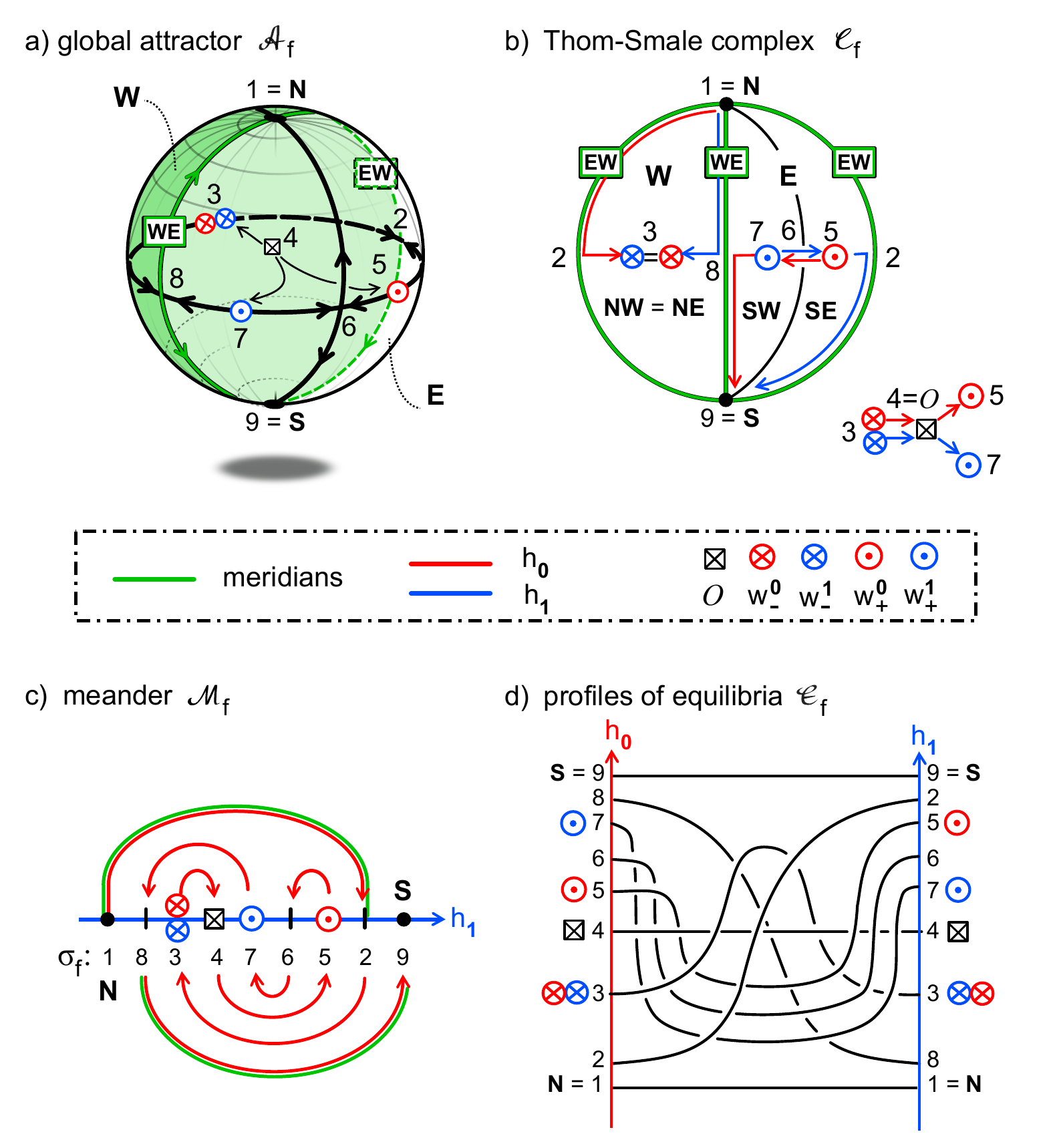}
\caption{\emph{
Example of a Sturm 3-ball global attractor $\mathcal{A}_f = clos W^u(\mathcal{O})$. Equilibria are labeled as $\mathcal{E}=\{1,\ldots,9\}$. The previous papers \cite{firo3d-1, firo3d-2} established the equivalence of the viewpoints (a)--(d).
(a) The Sturm global attractor $\mathcal{A}$, 3d view, including the location of the poles $\textbf{N}$, $\textbf{S}$, the (green) meridians $\textbf{WE}$, $\textbf{EW}$, the central equilibrium $\mathcal{O}=4$ and the hemispheres $\textbf{W}$ (green), $\textbf{E}$.
(b) The Thom-Smale complex $\mathcal{C}_f$ of the boundary sphere $\Sigma^2 = \partial c_\mathcal{O}$, including the Hamiltonian SZS-pair of paths $(h_0,h_1)$, (red/blue). 
The right and left boundaries denote the same $\textbf{EW}$ meridian and have to be identified. 
(c) The Sturm meander $\mathcal{M}$ of the global attractor $\mathcal{A}$. 
The meander $\mathcal{M}$ is the curve $a \mapsto (v,v_x)$, at $x=1$, which results from Neumann initial conditions $(v,v_x)=(a,0)$, at $x=0$, by shooting via the equilibrium ODE \eqref{eq:1.2}. 
Intersections of the meander with the horizontal $v$-axis indicate equilibria. 
(d) Spatial profiles $x\mapsto v(x)$ of the equilibria $v \in \mathcal{E}$. 
Note the different orderings of $v(x)$, by $h_0 = \mathrm{id}$ at the left boundary $x=0$, and by the Sturm permutation $\sigma = h_1 = (1\ 8\ 3\ 4\ 7\ 6\ 5\ 2\ 9)$ at the right boundary $x=1$. 
The same orderings characterize the meander $\mathcal{M}$ in (c) and the Hamiltonian SZS-pair $(h_0,h_1)$ in the Thom-Smale complex (b).
}}
\label{fig:1.0}
\end{figure}

We attach the name of \emph{Sturm} to the PDE \eqref{eq:1.1}, and to its global attractor $\mathcal{A}_f$\,. This refers to a crucial \emph{Sturm nodal property} of its solutions, which we express by the \emph{zero number} $z$.
Let $0 \leq z (\varphi) \leq \infty$ count the number of (strict) sign changes of continuous spatial profiles $\varphi : [0,1] \rightarrow \mathbb{R}, \, \varphi \not\equiv 0$.
For any two distinct solutions $u^1$, $u^2$ of \eqref{eq:1.1}, the zero number
	\begin{equation}
	t \quad \longmapsto \quad z(u^1(t, \cdot ) - u^2(t, \cdot ))\,
	\label{eq:1.4}
	\end{equation}
is then nonincreasing with time $t$, for $t\geq0$, and finite for $t>0$.
Moreover $z$ drops strictly with increasing $t>0$, at any multiple zero of the spatial profile $x \mapsto u^1(t_0 ,x) - u^2(t_0 ,x)$; see \cite{an88}.
See Sturm \cite{st1836} for the linear autonomous variant \eqref{eq:SLpar} below. 

For example, let $\varphi_j$ denote the $j$-th Sturm-Liouville eigenfunction $\varphi_j$ of the linearization at any equilibrium $v$.
Sturm not only observed that $z(\varphi_j)=j$.
Already in 1836 he proved the much more general statement
	\begin{equation}
	\label{eq:1.5a}
	0 \neq \varphi \in \mathrm{span}\,\{\varphi_j\,,\varphi_{j+1}\,\ldots,\varphi_k\} \quad\Longrightarrow\quad j\leq z(\varphi)\leq k\,.
	\end{equation} 
His proof was based on the solution $\psi(t,\cdot)$ of the associated linear parabolic equation
	\begin{equation}
	\label{eq:SLpar}
	\psi_t = \psi_{xx} + f_p(x,v(x),v_x(x))\,\psi_x + f_u(x,v(x),v_x(x))\,\psi\,,
	\end{equation} 
under Neumann boundary conditions and with initial condition $\psi(0,\cdot)=\varphi$.
He then invoked nonincrease of the zero number $t\mapsto z(\psi(t,\cdot))$.
Since the rescaled limits of $\psi$ for $t\rightarrow\pm\infty$ are eigenfunctions, this proved his claim \eqref{eq:1.5a}.

As a convenient notational variant of the zero number $z$, we will also write
	\begin{equation}
	z(\varphi) = j_{\pm}
	\label{eq:1.5}
	\end{equation}
to indicate $j$ strict sign changes of $\varphi$, by $j$, and the sign $\pm \varphi (0) >0$, by the index $\pm$.
For example, we may fix the sign of any $j$-th Sturm-Liouville eigenfunction $\varphi_j$ such that $\varphi_j(0)>0$, i.e.~$z(\varphi_j) = j_+$\,.

The consequences of the Sturm nodal property \eqref{eq:1.4} for the nonlinear dynamics of \eqref{eq:1.1} are enormous.
For an introduction see \cite{ma82, brfi88, fuol88, mp88, brfi89, ro91, fisc03, ga04} and the many references there.
Let us also mention Morse-Smale transversality, a prominent concept in \cite{pasm70, pame82, ol83}.
The Sturm property \eqref{eq:1.4} automatically implies Morse-Smale transversality, for hyperbolic equilibria.
More precisely, intersections of unstable and stable manifolds $W^u(v)$ and $W^s(w)$ along heteroclinic orbits $v \leadsto w$ are  automatically transverse:
$W^u(v) \transv W^s(w)$.
See \cite{he85, an86}.
In the Morse-Smale setting, Henry \cite{he85} also observed
	\begin{equation}
	\label{eq:1.5b}
	v \leadsto w \quad\Longleftrightarrow\quad w\in\partial W^u(v)\,.
	\end{equation} 
Here $\partial W^u(v) := \mathrm{clos}\,W^u(v) \setminus W^u(v)$ denotes the topological boundary of the unstable manifold $W^u(v)$.

In a series of papers, based on the zero number, we have given a purely combinatorial description of Sturm global attractors $\mathcal{A}_f$; see \cite{firo96, firo99, firo00}.
Define the two \emph{boundary orders} $h_0, h_1$: $\lbrace 1, \ldots, N \rbrace \rightarrow \mathcal{E}$ of the equilibria such that
	\begin{equation}
	h_\iota (1) < h_\iota (2) < \ldots < h_\iota (N) \qquad \mathrm{at}
	\qquad x=\iota \in \{0,1\}\,.
	\label{eq:1.6}
	\end{equation}
See fig.~\ref{fig:1.0}(d) for an illustration with $N=9$ equilibrium profiles, $\mathcal{E} = \{1,\ldots,9\}, \ h_0 = \mathrm{id},\ h_1 = (1\ 8\ 3\ 4\ 7\ 6\ 5\ 2\ 9)$.	
The general combinatorial description of Sturm global attractors $\mathcal{A}$ is based on the \emph{Sturm~permutation} $\sigma \in S_N$ which was introduced by Fusco and Rocha in \cite{furo91} and is defined as
	\begin{equation}
	\sigma:= h_0^{-1} \circ h_1\,.
	\label{eq:1.7a}
	\end{equation}
Already in \cite{furo91}, the following explicit recursions have been derived for the Morse indices $i_k:=i(h_0(k))$:
	\begin{equation}
	\begin{aligned}
	i_1 &:=  i_N := 0\,,\\
	i_{k+1} &:= i_k+(-1)^{k+1}\,
	\mathrm{sign}\, (\sigma^{-1}(k+1)-\sigma^{-1}(k))\,.\\
	\end{aligned}
	\label{eq:1.7b}
	\end{equation}
Similarly, the (unsigned) zero numbers $z_{jk} := z(v_j-v_k)$ are given recursively, for $j\neq k \neq j+1$, as
	\begin{equation}
	\begin{aligned}
	z_{kk} &:= i_k\,,\qquad\qquad\qquad \\z_{1k} &:= z_{Nk}:=0\,,\\
	z_{j+1,k} &:= \scalebox{0.97}[1.0]
	{$ z_{jk} + \tfrac{1}{2}(-1)^{j+1}
	\cdot\left[ \mathrm{sign}\,\left(\sigma^{-1}(j+1)-\sigma^{-1}(k)\right)-\mathrm{sign}\,\left(\sigma^{-1}(j)-\sigma^{-1}(k)\right)\right].
	$}
	\end{aligned}
	\label{eq:1.7c}
	\end{equation}

Using a shooting approach to the ODE boundary value problem \eqref{eq:1.2}, the Sturm~permutations $\sigma \in S_N$ have been characterized, purely combinatorially, as \emph{dissipative Morse meanders} in \cite{firo99}.
Here the \emph{dissipativeness} property requires fixed $\sigma(1)=1$ and $\sigma(N)=N$.
The \emph{Morse} property requires nonnegative Morse indices $i_k\geq0$ in \eqref{eq:1.7b}, for all $k$.
The \emph{meander} property, finally, requires the formal path $\mathcal{M}$ of alternating upper and lower half-circles defined by the permutation $\sigma$, as in fig.~\ref{fig:1.0}(c), to be Jordan, i.e.~non-selfintersecting.
See the beautifully illustrated book \cite{ka17} for ample material on many additional aspects of meanders.

In \cite{firo96} we have shown how to determine which equilibria $v, w$ possess a heteroclinic orbit connection \eqref{eq:1.3}, explicitly and purely combinatorially from dissipative Morse meanders $\sigma$. 
This was based, in particular, on the results \eqref{eq:1.7b} and \eqref{eq:1.7c} of \cite{furo91}.

More geometrically, global Sturm attractors $\mathcal{A}_f$ and $\mathcal{A}_g$ of nonlinearities $f, g$ with the same Sturm permutation $\sigma_f = \sigma_g$ are $C^0$ orbit-equivalent \cite{firo00}.
Only locally, i.e.~for $C^1$-close dissipative nonlinearities $f$ and $g$, this global rigidity result is based on the Morse-Smale transversality property mentioned above. See for example \cite{pasm70, pame82, ol83}, for such local aspects.

More recently, we have pursued a more explicitly geometric approach.
Let us consider \emph{finite~regular cell complexes}
	\begin{equation}
	\mathcal{C} = \bigcup\limits_{v\in \mathcal{E}} c_v\,,
	\label{eq:1.8a}
	\end{equation}
i.e. finite disjoint unions of \emph{cell~interiors} $c_v$ with additional gluing properties of their boundaries.
We think of the labels $v\in \mathcal{E}$ as \emph{barycenter} elements of $c_v$\,.
For cell complexes we require the closures $\bar{c}_v$ in $\mathcal{C}$ to be the continuous images of closed unit balls $\bar{B}_v$ under \emph{characteristic maps}.
We call $\mathrm{dim}\,\bar{B}_v$ the dimension of the (open) cell $c_v$\,. 
For positive dimensions of $\bar{B}_v$ we require $c_v$ to be the homeomorphic images of the interiors $B_v$\,. 
For dimension zero we write $B_v := \bar{B}_v$ so that any 0-cell $c_v= B_v$ is just a point.
The \emph{m-skeleton} $\mathcal{C}^m$ of $\mathcal{C}$ consists of all cells of dimension at most $m$.
Gluing requires $\partial c_v := \bar{c}_v \setminus c_v \subseteq \mathcal{C}^{m-1}$ for any $m$-cell $c_v$\,.
Thus, the boundary $(m-1)$-sphere $S_v := \partial B_v = \bar{B}_v \setminus B_v$ of any $m$-ball  $B_v$\,, $m>0$, maps into the $(m-1)$-skeleton,
	\begin{equation}
	\partial B_v \quad \longrightarrow \quad \partial c_v \subseteq \mathcal{C}^{m-1}\,,
	\label{eq:1.9a}
	\end{equation}
for the $m$-cell $c_v$, by restriction of the characteristic map.
The continuous map \eqref{eq:1.9a} is called the \emph{attaching} (or \emph{gluing}) \emph{map}.
For \emph{regular} cell complexes, more strongly, the characteristic maps $ \bar{B}_v  \rightarrow  \bar{c}_v $ are required to be homeomorphisms, up to and including the \emph{attaching} (or \emph{gluing}) \emph{homeomorphism} \eqref{eq:1.9a} on the boundary $\partial B_v$\,. 
The $(m-1)$-sphere $\partial{c_v}$  is also required to be a sub-complex of $\mathcal{C}^{m-1}$. 
See \cite{frpi90} for some further background on this terminology.

The disjoint dynamic decomposition
	\begin{equation}
	\mathcal{A}_f = \bigcup\limits_{v \in \mathcal{E}_f} W^u(v) =: \mathcal{C}_f
	\label{eq:1.10a}
	\end{equation}
of the global attractor $\mathcal{A}_f$ into unstable manifolds $W^u(v)$ of equilibria $v$ is called the \emph{Thom-Smale complex} or \emph{dynamic complex}; see for example \cite{fr79, bo88, bizh92}.
In our Sturm setting \eqref{eq:1.1} with hyperbolic equilibria $v \in \mathcal{E}_f$\,, the Thom-Smale complex is a finite regular cell complex.
The open cells $c_v$ are the unstable manifolds $W^ u (v)$ of the equilibria $v \in \mathcal{E}_f$\,.
The proof follows from the Schoenflies result of \cite{firo13}; see \cite{firo14} for a summary.

We can therefore define the \emph{Sturm~complex} $\mathcal{C}_f$ to be the regular Thom-Smale complex $\mathcal{C}$
of the Sturm global attractor $\mathcal{A}=\mathcal{A}_f$\,, provided all equilibria $v \in \mathcal{E}_f$ are hyperbolic.
Again we call the equilibrium $v \in \mathcal{E}_f$ the \emph{barycenter} of the cell $c_v=W^u(v)$.
The dimension of $c_v$ is the Morse index $i(v)$, of course.
A 3-dimensional Sturm complex $\mathcal{C}_f$\,, for example, is the regular Thom-Smale complex of a 3-dimensional $\mathcal{A}_f$\,, i.e. of a Sturm global attractor for which all equilibria $v \in \mathcal{E}_f$ have Morse indices $i(v) \leq 3$.
See fig.~\ref{fig:1.0}(b) for the Sturm complex $\mathcal{C}_f$ of the Sturm global attractor $\mathcal{A}_f$ sketched in fig.~\ref{fig:1.0}(a), which is the closure $\bar{c}_\mathcal{O}$ of a single 3-cell.
With this identification we may henceforth omit the explicit subscripts $f$, when the context is clear.

We can now formulate the \emph{main task} of the present paper:\\
\emph{Let the Thom-Smale complex $\mathcal{C}=\mathcal{C}_f$ of a Sturm global attractor $\mathcal{A}=\mathcal{A}_f$ be given, as an abstract regular cell complex. Derive the possible orders $h_\iota:\{1, \ldots, N\}\rightarrow \mathcal{E}$ of the equilibria $v\in \mathcal{E}=\mathcal{E}_f$, evaluated at the boundaries $x=\iota=0,1$.}

Once again: in the example of fig.~\ref{fig:1.0}, this task requires to derive the red and blue boundary orders in (d) from the given complex (b) -- of course without any previous knowledge of the red and blue path cheats in (b) which indicate precisely those orders.

For $\dim \mathcal{A} = 1$, the answer is almost trivial: any heteroclinic orbit is monotone, and therefore $h_0=h_1$\,.
We have also solved this task for Sturm global attractors $\mathcal{A}$ of dimension
	\begin{equation}
	\dim \mathcal{A} = \max\{i(v)\mid v \in \mathcal{E}\} 
	\label{eq:1.7}
	\end{equation}
equal to two; see the planar trilogy \cite{firo08, firo3d-2, firo3d-3}.
For Sturm 3-balls $\mathcal{A}=\bar{c}_ \mathcal{O}$, which are the closure of the unstable manifold cell $c_\mathcal{O}$ of a single equilibrium $\mathcal{O}$ of maximal Morse index $i (\mathcal{O})=3$, our solution has been presented in the 3-ball trilogy \cite{firo3d-1, firo3d-2, firo3d-3}. 
See \eqref{eq:1.12}--\eqref{eq:1.14} below and section \ref{sec5} for further discussion.
The present paper settles the general case.

It has turned out that the Thom-Smale complex $\mathcal{C}=\mathcal{C}_f$ does not determine the boundary orders $h_\iota$ uniquely -- not even when the trivial equivalences \eqref{eq:1.27} below are taken into account.
See \cite{fi94, firo96}.
Our unique construction of $h_\iota$ therefore involves a refinement of the Thom-Smale complex $\mathcal{C}=\mathcal{C}_f$ which we introduce next: the signed Thom-Smale complex $\mathcal{C}^s=\mathcal{C}_f^s$\,.
The examples in \cite{fi94, firo96, firo3d-3} show that the same abstract regular cell complex $\mathcal{C}$ may possess one or several such refinements, as a signed Thom-Smale complex $\mathcal{C}^s=\mathcal{C}_f^s$ of Sturm type -- or no such refinement at all.

The refinement is crucially based on the disjoint \emph{signed hemisphere decomposition}
	\begin{equation}
	\partial W^u(v) =
	\bigcup\limits_{0\leq j< i(v)}^\centerdot
	\Sigma_\pm^j(v)
	\label{eq:1.8}
	\end{equation}
of the topological boundary $\partial W^u= \partial c_v = \bar{c}_v \smallsetminus c_v$ of the unstable manifold $W^u(v)=c_v$\,, for any equilibrium $v$.
As in \cite[(1.19)]{firo3d-2} we define the open hemispheres by their Thom-Smale cell decompositions
	 \begin{equation}
	 \Sigma_ \pm^j(v) :=
	 \bigcup\limits_{w\in \mathcal{E}_\pm^j(v)}^\centerdot
	 W^u(w)
	 \label{eq:1.9}
	 \end{equation}
with the nonempty equilibrium sets
	\begin{equation}
	\mathcal{E}_\pm^j(v) := \lbrace w\in \mathcal{E}_f\,|\,z(w-v)=j_\pm \ \mathrm{and} \
  v\leadsto w\rbrace\,,
	\label{eq:1.10}
	\end{equation}
as barycenters, for $0\leq j<i(v)$.
Equivalently, we may define the hemisphere decompositions, inductively, via the topological boundary $j$-spheres $\Sigma^j(v)$ of the fast unstable manifolds $W^{j+1}(v)$, as
	\begin{equation}
	\Sigma^j(v) =:
	\bigcup\limits_{0\leq k\leq j}^\centerdot \Sigma_\pm^k(v) \,.
	\label{eq:1.11}
	\end{equation}

Here $W^{j+1}(v)$ is tangent to the eigenvectors $\varphi_0, \ldots ,\varphi_j$ of the first $j+1$ unstable Sturm-Liouville eigenvalues $\lambda_0 > \ldots > \lambda_j >0$ of the linearization at the equilibrium $v$.
In fact $\Sigma^{j-1}(v)$ becomes an equator in $\Sigma^j(v)$, recursively, defining the two remaining hemispheres $\Sigma_\pm^j(v)$ in $\Sigma^j(v)$.
See \cite{firo3d-1} for further details.

We call the resulting refined Thom-Smale regular cell complex $\mathcal{C}=\mathcal{C}_f$ of a Sturm global attractor $\mathcal{A}=\mathcal{A}_f$\,, together with the above hemisphere decompositions of all cell boundaries, the \emph{signed Thom-Smale complex} $\mathcal{C}^s=\mathcal{C}_f^s$\,.

Abstractly, we define a \emph{signed hemisphere complex} $\mathcal{C}^s$ via a regular refinement of a given regular cell complex $\mathcal{C}$ as follows.
We recursively bisect each closed $n$-cell $\bar{c}=\bar{c}^n$, equatorially, by closed cells $\bar{c}^j$ of successively lower dimensions  $j=n-1,\ldots, 1$.
(The cell interiors $c^j$ are an abstraction of the fast unstable manifolds $W^j$ of the barycenter of $v$ with Morse index $i(v)=n$.)
On each boundary sphere $\Sigma^j = \partial c^{j+1}$, this induces a decomposition of $\Sigma^j \setminus \Sigma^{j-1}$ into two hemispheres.
For bookkeeping, we may assign signs $\pm$ to these hemispheres and denote them as $\Sigma_\pm^j$\,, respectively.
Without further discussion of proper bookkeeping constraints, we call any such resulting refinement of the original regular cell complex $\mathcal{C}$ a signed hemisphere complex $\mathcal{C}^s$.
It remains a main open question to properly describe the specific sign assignments which characterize the signed Thom-Smale complexes $\mathcal{C}^s=\mathcal{C}_f^s$ arising from the hemisphere decomposition \eqref{eq:1.8}--\eqref{eq:1.11} above.

Consider 3-ball Sturm attractors $\mathcal{A}=\bar{c}_\mathcal{O}$, for example, with Morse index $i(\mathcal{O})=3$. 
Then the signed hemisphere decomposition \eqref{eq:1.11} at $v=\mathcal{O}$ reads
	\begin{equation}
	\Sigma^2(\mathcal{O}) = \partial W^u (\mathcal{O}):=\mathrm{clos}\,W^ u(\mathcal{O})\setminus W^u(\mathcal{O})=
	\bigcup\limits_{j=0}^2 \Sigma_\pm^j\ (\mathcal{O})\,.
	\label{eq:1.12}
	\end{equation}
Here the \emph{North pole} $\mathbf{N}:=\Sigma^0_- (\mathcal{O})$ and the \emph{South pole} $\mathbf{S}:=\Sigma^0_+ (\mathcal{O})$ denote the boundary of the one-dimensional fastest unstable manifold $W^1 = W^1(\mathcal{O})$, tangent to the positive eigenfunction $\varphi_0$ of the largest eigenvalue $\lambda_0$ at $\mathcal{O}$.
Indeed, solutions $t \mapsto u(t,x)$ in $W^1$ are monotone in $t$, 
for any $x$. Accordingly 
	\begin{equation}
	z(\mathbf{N}- \mathcal{O}) = 0_-\,, 
	\quad z(\mathbf{S}- \mathcal{O}) = 0_+\,,
	\label{eq:1.13}
	\end{equation}
i.e. $\mathbf{N} < \mathcal{O} < \mathbf{S}$ for all $0\leq x\leq 1$.
The poles $\mathbf{N},\mathbf{S}$ split the boundary circle $\Sigma^1 = \partial W^2 (\mathcal{O})$ of the 2-dimensional fast unstable manifold $W^2=W^2(\mathcal{O})$ into the two \emph{meridian} half-circles $\mathbf{EW}:= \Sigma^1_- (\mathcal{O})$ and $\mathbf{WE}:=\Sigma^1_+(\mathcal{O})$.
The boundary circle $\Sigma^1$, in turn, splits the boundary sphere $\Sigma^2 = \partial W^u(\mathcal{O})$ of the whole 3-dimensional unstable manifold $W^u=W^3$ of $\mathcal{O}$ into the Western hemisphere $\mathbf{W}:=\Sigma^2_-(\mathcal{O})$ and the Eastern hemisphere $\mathbf{E}:=\Sigma^2_+(\mathcal{O})$.
Omitting the explicit references to the central equilibrium $\mathcal{O}$, the hemisphere translation table becomes:
	\begin{equation}
	\begin{aligned}
	(\Sigma_-^0, \Sigma_+^0) \quad &\mapsto \quad (\mathbf{N}, \mathbf{S})\,;\\
	(\Sigma_-^1, \Sigma_+^1) \quad &\mapsto \quad 
	(\mathbf{EW}, 		\mathbf{WE})\,;\\
	(\Sigma_-^2, \Sigma_+^2) \quad &\mapsto \quad (\mathbf{W}, \mathbf{E})\,.
	\end{aligned}
	\label{eq:1.14}
	\end{equation}
In this case,  a complete characterization of the signed Thom-Smale complexes $\mathcal{C}^s=\mathcal{C}_f^s$ has been achieved. See section \ref{sec5}, definition \ref{def:5.1} and theorem \ref{thm:5.2}, for further discussion.
	
To return to our main task, let us now fix any unstable equilibrium $\mathcal{O}\in \mathcal{A}$ of Morse index $n:=i (\mathcal{O})\geq1$.
It is our task to identify the \emph{predecessors} and \emph{successors}
	\begin{equation}
	w^\iota_\pm:= h_\iota(h^{-1}_\iota (\mathcal{O})\pm1)
	\label{eq:1.15}
	\end{equation}
of $\mathcal{O}$, along the boundary orders $h_\iota$ at $x=\iota=0,1$.
As input information, we will only use the geometric information encoded in the signed Thom-Smale complex $\mathcal{C}^s=\mathcal{C}_f^s$, of Sturm type.
Specifically, the hemisphere refinements are given by the hemisphere decompositions $\Sigma_\pm^j(v)$ of the Sturm complex.
We will illustrate all this for the example of fig.~\ref{fig:1.0} at the end of the present section.
Suffice it here to say that it remains a highly nontrivial task to pass from the partial order, defined by the collection of all signed hemisphere decompositions, to the total order required by $h_0$ and $h_1$\,.

Already \eqref{eq:1.7b} implies that the $\iota$-neighbors $w^\iota_\pm$ of $\mathcal{O}$ possess Morse indices $i(w_\pm^\iota)$ adjacent to $i(\mathcal{O})$:
	\begin{equation}
	\begin{aligned}
	&i(h_\iota(1)) = i(h_\iota(N))=0\,;\\
	&i(w^\iota_\pm)= i(\mathcal{O})\pm(-1)^{i(\mathcal{O})}\mathrm{sign}\,(h^{-1}_{1-\iota}(w^\iota_\pm)-h^{-1}_{1-\iota}(\mathcal{O}))\,.\\
	\end{aligned}
	\label{eq:1.16}
	\end{equation}
For $\iota=0$, this follows from \eqref{eq:1.7b} by elementary substitutions.
For $\iota=1$, this follows from the case $\iota=0$ by the substitution $x\mapsto 1-x$; see also the trivial equivalences \eqref{eq:1.27} below.

To determine the $\iota$-neighbors $w^\iota_\pm$ of $\mathcal{O}$ geometrically, in case $i(w^\iota_\pm)=i(\mathcal{O})-1$, we develop the notion of descendants next. See \cite{firo3d-2} for the special case $n=3$.

\begin{defi}\label{def:1.1}
For fixed $n:=i(\mathcal{O})>0$, let $\mathbf{s}= s_{n-1} \ldots s_0$ denote any sequence of $n$ symbols $s_j\in\{\pm\}$.
Let
	\begin{equation}
	v^j(\mathbf{s})\in \mathcal{E}^j_{s_j}(\mathcal{O})\subseteq \Sigma^j_{s_j}(\mathcal{O}),
	\label{eq:1.18}
	\end{equation}
be defined, inductively for increasing $j=1,\ldots,n-1$, as the unique equilibrium in the signed hemisphere $\Sigma^j_{s_j}(\mathcal{O})$ such that
	\begin{equation}
	v^{j-1}(\mathbf{s}) \in \partial W^u(v^j(\mathbf{s}))=\partial c_{v^j(\mathbf{s})} \, .
	\label{eq:1.19a}
	\end{equation}
For $j=0$ we start the induction with the unique polar equilibria
	\begin{equation}
	\{v^0(\mathbf{s})\}:= \Sigma^0_{s_0} (\mathcal{O})
	\label{eq:1.17}
	\end{equation}
at the two endpoints of the one-dimensional fastest unstable manifold $W^0(\mathcal{O})$.
We call the sequence $v^j(\mathbf{s}),\ j=n-1, \ldots, 0,$ the $\mathbf{s}$-\emph{descendants} of $\mathcal{O}$.
For the constant sequence $\mathbf{s}=++\ldots$\ , we call $v^j(++ \ldots)$ the $+$\emph{descendants} of $\mathcal{O}$.
Similarly, $-$\emph{descendants} have constant $\mathbf{s}=--\ldots$\ .
\emph{Alternating descendants} have alternating sign sequences $s_j$\,.
\end{defi}

For any given $\mathcal{O}$, the descendant $v^j(\mathbf{s})$ only depends on $s_j\,, \ldots, s_0$\,.
In section 2 we show that the descendants $v^j(\mathbf{s})$ are indeed defined uniquely.
We also determine the Morse indices $i(v^j(\mathbf{s}))=j$ and show that the descendants define a \emph{staircase} sequence of heteroclinic orbits between equilibria of descending adjacent Morse indices:
	\begin{equation}
	\mathcal{O}\leadsto v^{n-1}(\mathbf{s})\leadsto \ldots\leadsto v^0(\mathbf{s});
	\label{eq:1.19b}
	\end{equation}
see \eqref{eq:2.3}--\eqref{eq:2.8}.

Clearly, the notion \eqref{eq:1.18} -- \eqref{eq:1.17} of descendants is purely geometric: it is based on the signed hemisphere decomposition $\Sigma^j_\pm(\mathcal{O})=\partial c_{\mathcal{O},\pm}^j$ in the abstract signed hemisphere complex $\mathcal{C}^s$, Beyond the partial order implicit in the hemisphere signs, we do not require any further explicit data on the total boundary orders $h_\iota$\, which we derive.
In section \ref{sec5} we will illustrate this viewpoint, based on specific examples.

In our main result, we will only be concerned with alternating and constant symbol sequences $s_j$\,.
We therefore abbreviate these sequences as follows
	\begin{equation}
	\begin{aligned}
	&\mathbf{s}=+-\ldots:\qquad s_j:=(-1)^{n-1-j} \, ;\\
	&\mathbf{s}=-+\ldots:\qquad s_j:=(-1)^{n-j} \, ;\\
	&\mathbf{s}=++\ldots:\qquad s_j:=+ \, ;\\
	&\mathbf{s}=--\ldots:\qquad s_j:=- \, .\\
	\end{aligned}
	\label{eq:1.20}
	\end{equation}
With this notation we can finally formulate that main result.

\begin{thm} \label{thm:1.2}
Consider any unstable equilibrium $\mathcal{O}$ with  unstable dimension $n= i(\mathcal{O})\geq1$.
Assume that any one of the boundary successors $w^\iota_+$ or predecessors $w^\iota_-$ of $\mathcal{O}$ at $x=\iota=0,1$, as defined in \eqref{eq:1.15}, is more stable than $\mathcal{O}$, i.e.
	\begin{equation}
	i(w^\iota_\pm)=n-1.
	\label{eq:1.21}
	\end{equation}
	
Then $w^\iota_\pm$ of $\mathcal{O}$ is given by the leading descendant $v^{n-1}(\mathbf{s})$ of $\mathcal{O}$, according to the following list:
	\begin{eqnarray}
	w^0_-&=& v^{n-1}(-+\ldots); \label{eq:1.22}\\
	w^0_+&=& v^{n-1}(+-\ldots); \label{eq:1.23}\\
	w^1_-&=& 
	   \begin{cases}
	   v^{n-1}(++\ldots), \qquad \mathrm{for\ even\ n;}\\
       v^{n-1}(--\ldots), \qquad \mathrm{for\ odd\ n;}
	   \end{cases} \label{eq:1.24}\\
	w^1_+&=& 
	   \begin{cases}
	   v^{n-1}(--\ldots), \qquad \mathrm{for\ even\ n;}\\
       v^{n-1}(++\ldots), \qquad\mathrm{for\ odd\ n.}\\
	   \end{cases} \label{eq:1.25}
	\end{eqnarray}
\end{thm}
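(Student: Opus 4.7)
The plan is to first reduce the right-boundary cases $\iota=1$ to the left-boundary cases $\iota=0$ via the spatial reflection $x\mapsto 1-x$, which interchanges $h_0$ and $h_1$. This reflection preserves the zero count $z(\psi)$ but flips the sign indicator at $x=0$ by the factor $(-1)^{z(\psi)}$, since $\psi(0)$ and $\psi(1)$ differ in sign by $(-1)^{z(\psi)}$. In terms of the sign sequence $\mathbf{s}$ of \eqref{eq:1.20}, the reflection acts at level $j$ by $s_j\mapsto (-1)^j s_j$. A short calculation then shows that the alternating sequences $-+\ldots$ and $+-\ldots$ at $\iota=0$ are mapped to constant sequences $\pm\ldots$ at $\iota=1$, precisely in the parity-of-$n$ pattern claimed in \eqref{eq:1.24}--\eqref{eq:1.25}.

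For the $\iota=0$ cases, I first identify the leading hemisphere sign $s_{n-1}$. Since $h_0$ orders equilibria by the value $v(0)$, the sign of $(w^0_\pm-\mathcal{O})(0)$ is directly $\pm$. The zero-number recursion \eqref{eq:1.7c} evaluated between boundary-adjacent equilibria whose Morse indices differ by one, together with the Sturm dropping bound $i(w)\le z(w-\mathcal{O})<i(\mathcal{O})$ on any heteroclinic $\mathcal{O}\leadsto w$, forces $z(w^0_\pm-\mathcal{O})=n-1$. Combined with the sign at $x=0$, this yields $z(w^0_\pm-\mathcal{O})=(n-1)_\pm$. Henry's criterion \eqref{eq:1.5b} and the Thom-Smale structure \eqref{eq:1.8}--\eqref{eq:1.10} then give $\mathcal{O}\leadsto w^0_\pm$ and hence $w^0_\pm\in\mathcal{E}^{n-1}_\pm(\mathcal{O})\subseteq\Sigma^{n-1}_\pm(\mathcal{O})$. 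This fixes $s_{n-1}=+$ for $w^0_+$, matching $\mathbf{s}=+-\ldots$ of \eqref{eq:1.23}, and $s_{n-1}=-$ for $w^0_-$, matching $\mathbf{s}=-+\ldots$ of \eqref{eq:1.22}.

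To upgrade this hemisphere identification to the full descendant identification $w^0_\pm=v^{n-1}(\mathbf{s})$, I build the descending chain by induction on $n$. Applying the theorem inductively at the central equilibrium $w^0_\pm$, now of Morse index $n-1$ at boundary $\iota=0$, locates the $0$-boundary neighbor $w'$ of $w^0_\pm$ whose Morse index drops to $n-2$, and this $w'$ is a specific leading descendant of $w^0_\pm$. A further application of \eqref{eq:1.7c} along the consecutive triple $\mathcal{O},w^0_\pm,w'$ in $h_0$ computes $z(w'-\mathcal{O})=(n-2)_{s_{n-2}}$ with the alternation $s_{n-2}=-s_{n-1}$, placing $w'$ in $\mathcal{E}^{n-2}_{s_{n-2}}(\mathcal{O})$. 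Iterating this descent down to a polar equilibrium $u_0\in\Sigma^0_{s_0}(\mathcal{O})$ produces the heteroclinic chain $w^0_\pm\leadsto u_{n-2}\leadsto\ldots\leadsto u_0$ realizing the descendant chain for the alternating $\mathbf{s}$, and the uniqueness of descendants to be established in section 2 then forces $w^0_\pm=v^{n-1}(\mathbf{s})$. The main obstacle is this inductive step: one must rigorously check that at each descent level a $0$-boundary neighbor with strictly lower Morse index exists, possibly requiring a deliberate choice between successor and predecessor dictated by parity, and verify that the sign at $x=0$ genuinely alternates along the whole chain via systematic use of \eqref{eq:1.7c} at each consecutive Morse-index drop. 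Both amount to careful combinatorial bookkeeping with the Sturm permutation and require no further geometric input beyond the signed hemisphere complex.
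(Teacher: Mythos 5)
Your first two steps broadly parallel the paper's: reduction of the four cases to one via the trivial equivalences \eqref{eq:1.27}, and identification of the hemisphere membership $w^\iota_\pm\in\mathcal{E}^{n-1}_{s_{n-1}}(\mathcal{O})$ from $z(w^\iota_\pm-\mathcal{O})=n-1$ plus the boundary sign. (One caveat already there: the connection $\mathcal{O}\leadsto w^\iota_\pm$ does not come for free from Henry's criterion \eqref{eq:1.5b}; the paper derives it from Wolfrum's blocking lemma, using that no equilibrium lies between $\mathcal{O}$ and its immediate $h_\iota$-neighbor at $x=\iota$. That is fixable.) The genuine gap is your third step. You propose to upgrade hemisphere membership to the full identity $w^0_\pm=v^{n-1}(\mathbf{s})$ by inductively descending through $h_0$-adjacent equilibria of successively lower Morse index. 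But the descendant staircase $v^{n-1}\leadsto v^{n-2}\leadsto\ldots\leadsto v^0$ is a chain in the cell complex ($v^{j-1}\in\partial c_{v^j}$), not a chain of $h_0$-neighbors, and the hypothesis you need at the next level --- that some $h_0$-neighbor of $w^0_\pm$ has Morse index $n-2$ --- can simply fail. In the introductory example of fig.~\ref{fig:1.0} ($h_0=\mathrm{id}$, Morse indices $0,1,2,3,2,1,2,1,0$), take $\mathcal{O}=7$, $n=2$, so $w^0_-=6$ with $i(6)=1$. Both $h_0$-neighbors of $6$, namely $5$ and $7$, have Morse index $2$; no choice of successor versus predecessor produces a neighbor of index $0$, so your descent stalls at the very first step. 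The true second descendant lies in $\partial c_6\subseteq\{\mathbf{N},\mathbf{S}\}=\{1,9\}$, far from $6$ in the $h_0$-order. So this is not ``careful combinatorial bookkeeping'' --- the induction is false as stated.

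What is missing is precisely the paper's theorem \ref{thm:3.1}: among all equilibria of $\mathcal{E}^{n-1}_+(\mathcal{O})$, the one closest to $\mathcal{O}$ at $x=1$ (which, after the trivial equivalences, is what your second step identifies with the relevant neighbor $w^1_\mp$) coincides with the descendant $v^{n-1}(++\ldots)$. Its proof never descends through boundary orders. Instead it runs along the purely geometric staircase $v^{n-1}\leadsto\ldots\leadsto v^0$ of lemma \ref{lem:2.2}, observes that consecutive $v^j,v^{j-1}$ lie on opposite sides of a putative competitor $\underline{v}^{n-1}\neq v^{n-1}$ at $x=1$ by lemma \ref{lem:2.2}(iii),(iv), so that $\zeta_j:=z(v^j-\underline{v}^{n-1})$ increases strictly with $j$ by zero-number dropping, and then contradicts the hemisphere bound $\zeta_{n-1}\leq n-2$ of \eqref{eq:3.5a} via the pigeonhole proposition \ref{prop:2.1}. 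Some argument of this kind, comparing the entire descendant chain to the candidate neighbor through zero numbers, is indispensable, and your proposal does not contain it.
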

The above result determines the total boundary orders $h_\iota$ of all equilibria at the two boundaries $x=\iota=0,1$ uniquely.
Indeed, any two equilibria $v_1$ and $v_2$ which are adjacent in the order $h_\iota$\,, say at $\iota=x=0$, possess adjacent Morse indices $i(v_2)=i(v_1)\pm1$ by \eqref{eq:1.16}.
The more unstable equilibrium then qualifies as $\mathcal{O}$, in theorem 1.2, and the other equilibrium qualifies as the predecessor $w^0_-$\,, or as the successor $w^0_+$\,, of $\mathcal{O}$.
Let us therefore start at the top level barycenters of maximal cell dimension $\dim c_\mathcal{O}=i(\mathcal{O})=\dim\mathcal{A}$, but without any a priori knowledge of $h_\iota$ or $\sigma$ in \eqref{eq:1.6},  \eqref{eq:1.7a}.
By \eqref{eq:1.21}, all $h^\iota_-$ neighbors $w^\iota_\pm$ of such $\mathcal{O}$ can then be identified, purely geometrically, as top descendants $v^{n-1}(\mathbf{s})$ of $\mathcal{O}$ with Morse index $i(w_{\pm}^\iota)=\dim\mathcal{A}-1$.
Next, consider all barycenters of cell dimension $\dim \mathcal{A}-1$.
Unless their $h_\iota$-neighbors possess higher Morse index, and those adjacencies have already been taken care of, we may apply theorem 1.2 again to determine their remaining $h_\iota$-neighbors, this time of Morse index $i=\dim \mathcal{A}-2$.
Iterating this procedure we eventually determine all $h_\iota$-adjacencies and our main task is complete.

In sections \ref{sec2} and \ref{sec3} we prove Theorem \ref{thm:1.2} by the following strategy. First we reduce the four cases \eqref{eq:1.22}--\eqref{eq:1.25} to the single case
	\begin{equation}
	\mathbf{s}=++\ldots
	\label{eq:1.26}
	\end{equation}
by four \emph {trivial equivalences}. Indeed, the class of Sturm attractors $\mathcal{A}$ remains invariant under the transformations
	\begin{equation}
	x\mapsto1-x, \quad u\mapsto-u,
	\label{eq:1.27}
	\end{equation}
separately. 
Since the two involutions \eqref{eq:1.27} commute, they generate the Klein 4-group $\mathbb{Z}_2\times\mathbb{Z}_2$ of trivial equivalences.
Since this group acts transitively on the four constant and alternating symbol sequences  \eqref{eq:1.20}, as considered in theorem \ref{thm:1.2}, it is sufficient to consider the case $\mathbf{s}=++\ldots$ of \eqref{eq:1.26}. The remaining cases of \eqref{eq:1.22}--\eqref{eq:1.25} then follow by application of the trivial equivalences. For even $n$, for example, \eqref{eq:1.24} maps to \eqref{eq:1.22} under $x\mapsto 1-x$, to \eqref{eq:1.25} under $u\mapsto -u$, and to \eqref{eq:1.23} under the combination of both.
We henceforth restrict to the case $\mathbf{s}=(++\ldots)$ of \eqref{eq:1.25}. 
%We also restrict to the case of odd $n$, the even case being analogous.

In particular, theorem \ref{thm:1.2} solves a long-standing problem.
We already mentioned examples from \cite{fi94, firo96} of Sturm permutations $\sigma, \tau$ which produce the same Thom-Smale complex $\mathcal{C}$ even though $\sigma, \tau$ are not trivially equivalent.
Specifically these are the planar Sturm attractors of the pairs $\sigma= ($2 4 6 8)(3 5 7), $\tau= ($2 6)(3 7)(4 8) or $\sigma= ($2 4 8)(3 5 7), $\tau= ($2 6 4 8)(3 7), in cycle notation with $N=9$ equilibria.
Theorem \ref{thm:1.2} now asserts that the signed Thom-Smale complexes $\mathcal{C}^s$ determine their associated Sturm permutation uniquely.
In general, it is the precise equilibrium targets of the \emph{fast} unstable manifolds which distinguish the Sturm global attractors of $\sigma$ and $\tau$, via the resulting signatures.
In the particular planar examples, these are the 1-dimensional fast unstable manifolds in the 2-cells $c_v$ of equilibria with Morse index $i(v)=2$.

In section 2 we study the descendants of $\mathcal{O}$ for $\mathbf{s}=++\ldots$\ . 
We abbreviate
	\begin{equation}
	v^j:= v^j(++\ldots),
	\label{eq:1.28}
	\end{equation}
for $0\leq j < n= i(\mathcal{O})$.
In sections 3 and 4 we study the additional elements
	\begin{eqnarray}
	\ &\underline{v}^k:&\textrm{the equilibrium } v \in \mathcal{E}^k_+(\mathcal{O})\subseteq \Sigma^k_+(\mathcal{O})
	\textrm { which is closest to } \mathcal{O}, \textrm{ at }x =1, \label{eq:1.29}\\
	\ &\bar{v}^k:&\textrm{the equilibrium } v\in \mathcal{E}^k_+(\mathcal{O})%\subseteq  \Sigma^k_+(\mathcal{O})
	\textrm { which is most distant from } \mathcal{O}, \textrm{ at }x =0. \label{eq:1.30}
	\end{eqnarray}
In theorem \ref{thm:3.1} we show $\underline{v}^k = v^k$, for all $0 \leq k<n= i(\mathcal{O})$.
As a corollary, for $k=n-1$, this proves theorem (\ref{thm:1.2}) and completes our task.
In section 4 we show, in addition to $\underline{v}^k=v^k$, that $\underline{v}^k=\bar{v}^k$ for all $k$; see theorem \ref{thm:4.3}.
For an alternative proof of the minimax theorem \ref{thm:4.3} see also the companion paper \cite{rofi20}.
That more elementary proof is based on a more direct and quite detailed ODE analysis of the Sturm meander.
Strictly speaking, theorem \ref{thm:4.3} is not required for the identification task to derive the $h_\iota$-neighbors $w^\iota_\pm$ from $\mathcal{O}$.
However, it much facilitates the task to identify the equilibria $\mathcal{E}^j_\pm(\mathcal{O})$ in the hemispheres $\Sigma^j_\pm(\mathcal{O})$ from the Sturm meander $\mathcal{M}$, in examples.

For a first example let us return to the Thom-Smale 3-ball complex of fig.~\ref{fig:1.0}(b), where
$\mathcal{O}=4,\ i(\mathcal{O})=3,\ w^\iota_- = 3,\ w^0_+=5,\ w^1_+=7$.
See \eqref{eq:1.14} for identification of the hemispheres $\Sigma_\pm^j(\mathcal{O})$.
The equilibria $v\in \mathcal{E}^j_{s_j}(\mathcal{O})$ in the hemispheres $\Sigma^j_{s_j}(\mathcal{O})$, for $s = \pm$, are collected in the table

\begin{minipage}{0.1\textwidth}
\begin{equation}
\label{eq:1.36}
\end{equation}
\end{minipage}
\begin{minipage}{0.85\textwidth}
\begin{center}
\begin{tabular}{|c||c|c|}
\hline
$s_j$  &	$-$	&	$+$
\\ \hline \hline
$j=0$	&	$1= \mathbf{N}$	&	$9=\mathbf{S}$  \\ \hline
$j=1$	&	$2 \in \mathbf{EW}$	&	$8 \in \mathbf{WE}$ 
\\ \hline
$j=2$	&	$3 \in \mathbf{W}$	&	$5, 6, 7 \in \mathbf{E}$
\\ \hline
\end{tabular}
\end{center}
\end{minipage}

Therefore the alternating and constant descendants $v^j(\mathbf{s})$ of $\mathcal{O}= 4$ are given by

\par\smallskip
\begin{minipage}{0.1\textwidth}
\begin{equation}
\label{eq:1.37}
\end{equation}
\end{minipage}
\begin{minipage}{0.85\textwidth}
\begin{center}
\begin{tabular}{|c||c|c|c|c|}
\hline
$\mathbf{s}= s_2s_1s_0$	&	$-+-$	&	$+-+$ &$---$ & $+++$
\\ \hline \hline
$j=0$	&	$1= \mathbf{N}$	&	$9= \mathbf{S}$ &$1= \mathbf{N}$ & $9= \mathbf{S}$ \\ \hline
$j=1$	&	$8 \in \mathbf{WE}$	&	$2 \in \mathbf{EW}$ &$2 \in \mathbf{EW}$ &$8 \in \mathbf{WE}$
\\ \hline
$j=2$	&	$3 = w^0_-\in \mathbf{W}$	&	$5= w^0_+ \in \mathbf{E}$ & $3=w^1_-\in \mathbf{W}$ & $7= w^1_+\in \mathbf{E}$
\\ \hline
\end{tabular}
\end{center}
\end{minipage}
\par\smallskip

The $+$descendants $v^j=v^j(+ + +)$ of $\mathcal{O}=4$, for example, are constructed as $v^0=9=\mathbf{S}$ because $\Sigma^0_+=\mathcal{E}^0_+=\{\mathbf{S}\}$, and $v^1=8$ because $\mathbf{WE}=\Sigma^1_+\supseteq \mathcal{E}^1_+=\{8\}$.
Finally, $v^2\in \mathcal{E}^2_+= \{5,6,7\}\subseteq \Sigma^2_+= \mathbf{E}$ must satisfy $8=v^1\in c_{v^1}\subseteq \partial c_{v^2}$, by recursion \eqref{eq:1.18}, \eqref{eq:1.19a}, and therefore $v^2=7$.
Since $\underline{v}^2= v^2$, by theorem 3.1, we conclude that the successor $w^1_+$ of $\mathcal{O}$, with odd Morse index $n=i(\mathcal{O})=3$, is given by $w^1_+=\underline{v}^2= v^2= v^2(+++)=7$.
This agrees with the equilibrium profiles in fig \ref{fig:1.0}(d).
Note that $\underline{v}^2=7\in \mathbf{E}= \Sigma^2_+$ is in fact the $\mathcal{O}$-closest equilibrium in $\mathcal{E}^2_+=\{5,6,7\}\subseteq \Sigma^2_+$\,, at the right boundary $x=1$.
At the same time, $7=\underline{v}^2=\bar{v}^2$ is also the maximal equilibrium in $\mathcal{E}^2_+=\{5,6,7\}$ above $\mathcal{O}$, at the left boundary $x=0$.
Indeed, the red meander $\mathcal{M}$, in fig.~\ref{fig:1.0}(c), therefore traverses all equilibria  $5,6,7$ in the hemisphere $\mathbf{E}= \Sigma^2_+$\,, after $\mathcal{O}$, with 7 last, before it leaves that open hemisphere forever. 
See also fig.~\ref{fig:1.0}(b): the red meander path $h_0$ of the ordering at $x=0$ leaves the open hemisphere $\mathbf{E}= \Sigma^2_+$ at 7, where the blue path $h_1$ of the ordering at $x=1$ enters the same open hemisphere.
Similarly, the blue path $h_1$ leaves $\mathbf{E}$ at 5, where $h_0$ enters. 
This illustrates theorem \ref{thm:4.3}.
For many more examples see the discussion in section 5, most of which may well be digestible and instructive even before reading the other sections.

The companion paper \cite{rofi20} presents a meander based proof of theorem \ref{thm:4.3}. 
The property $v^{n-1}=\underline{v}^{n-1}$ of theorem \ref{thm:3.1}, which holds independently of theorem \ref{thm:4.3}, then allows us to identify, conversely, the geometric location of predecessors, successors, and signed hemispheres in the associated Thom-Smale complex. 
These results combined, can therefore be viewed as first steps towards the still elusive  goal of a complete geometric characterization of the signed Thom-Smale complexes which arise as Sturm global attractors.

\textbf{Acknowledgments.}
Dear Geneviève Raugel has gently accompanied our long and meandric explorations of Sturm global attractors with kind encouragement, deep understanding, and lasting friendship.
Extended mutually delightful hospitality by the authors is also gratefully acknowledged.
In addition, Clodoaldo Grotta-Ragazzo, Sergio Oliva, and Waldyr Oliva provided an
inspiring and cheerful 24/7 environment at IME-USP: viva!
Very insightful brief comments by the referee were a pleasure to take into account, extensively.
Anna~Karnauhova has contributed the illustrations with her inimitable artistic touch.
Original typesetting was patiently accomplished by Patricia H\u{a}b\u{a}\c{s}escu. 
This work was partially supported by DFG/Germany through SFB 910 project A4, and by FCT/Portugal through projects UID/MAT/04459/2013 and UID/MAT/04459/2019.

%%%%%%%%%%%%%%%%%%%%%%%%%%%%%%%%%%%%%%%%%%%%%%%%%%%%%%%

\section{Descendants}
\label{sec2}
In this section we fix any unstable hyperbolic equilibrium $\mathcal{O}$ of positive Morse index $n:=i(\mathcal{O})>0$, in a Sturm global attractor.
Let
	\begin{equation}
	\Sigma^{n-1}= \partial W^u (\mathcal{O})= \bigcup\limits_{0\leq j< n}^\centerdot \Sigma^j_\pm
	\label{eq:2.1}
	\end{equation}
be the disjoint signed decomposition of the $(n-1)$-sphere boundary of the $n$-dimensional unstable manifold $W^u(\mathcal{O})$, i.e. we abbreviate $\Sigma^j_\pm := \Sigma^j_\pm(\mathcal{O})$.
Let $\mathcal{E}^j_\pm := \mathcal{E}^j_\pm (\mathcal{O})$ abbreviate the equilibria in hemisphere $\Sigma^j_\pm \subseteq \partial W^{j+1}(\mathcal{O})$.
From \eqref{eq:1.10} we recall
	\begin{equation}
	\mathcal{E}^j_\pm = \{v\in \mathcal{E}\mid z(v- \mathcal{O})= j_\pm \textrm{ and } \mathcal{O}\leadsto v \}.
	\label{eq:2.2}
	\end{equation}
Concerning the descendants $v^j=v^j(\mathbf{s})$ of $\mathcal{O}$, according to definition \ref{def:1.1}, we also fix any sequence $\mathbf{s}=s_{n-1}\ldots s_0$ of $n$ signs $s_j = \pm$, for $0\leq j < n$.
We first explain why the descendants $v^j$ are well-defined. After a pigeon hole proposition \ref{eq:2.1}, we  collect some elementary properties of descendants in lemma \ref{lem:2.2}.

Except for that last lemma, we do not require the sign sequence $\mathbf{s}$ to be constant or alternating.
We do not require assumption \eqref{eq:1.21} of theorem \ref{thm:1.2} to hold anywhere, in the present section.
In particular, the descendant $v^{n-1}$ here need not coincide with any immediate successor or predecessor $w_\pm^\iota$ of $\mathcal{O}$ on any boundary $x=\iota=0,1$.

Let us examine the recursive definition \ref{def:1.1} first. For $s_0= \pm$, the equilibrium $\{v^0\} :=\Sigma^0_{s_0}(\mathcal{O})$ is defined uniquely by \eqref{eq:1.17}.
Now consider $1 \leq j <n$ and assume $v^0, \ldots, v^{j-1}$ have been well-defined, already.
By the Schoenflies result \cite{firo13} on the $j$-sphere boundary $\Sigma^j= \partial W^{j+1}$ of the $(j+1)$-dimensional fast unstable manifold $W^{j+1}=W^{j+1}(\mathcal{O})$ of $\mathcal{O}$, we have the disjoint decomposition $\text {clos } \Sigma^j_{s_j} = \Sigma^j_{s_j} \dot{\cup} \Sigma^{j-1} $, and hence
	\begin{equation}
	v^{j-1}\in \Sigma^{j-1}_{s_{j-1}} \subseteq \Sigma^{j-1}= \partial \Sigma^j_{s_j} \,.
	\label{eq:2.3}
	\end{equation}
	
We claim that there exists a unique cell $c_{v^j}=W^u(v^j)$ in
	\begin{equation}
	\Sigma^j_{s_j} = \bigcup \limits_{v \in \mathcal{E}^j_{s_j}}  W^u(v),
	\label{eq:2.4}
	\end{equation}
such that \eqref{eq:1.19a} holds, i.e. such that 
	\begin{equation}
	v^{j-1}\in \partial c_{v^j}\, .
	\label{eq:2.5}
	\end{equation}
This follows again from \cite{firo13}, which asserts the following.
Let $\varphi_j$ denote the $j$-th Sturm-Liouville eigenfunction of the linearization at $\mathcal{O}$, with sign chosen such that $z(\varphi_j)=j_+$\,.
The eigenprojection $P^j$ projects the closed $j$-dimensional hemisphere $\mathrm{clos}\, \Sigma^j_{s_j}$ into the tangent space $T_\mathcal{O}W^j= \mathrm{span} \{ \varphi_0,\ldots,\varphi_{j-1} \big \}$ at $\mathcal{O}$ of the $j$-dimensional fast unstable manifold $W^j(\mathcal{O})$.
The projection $P^j$ is homeomorphic onto a topological $j$-dimensional ball with Schoenflies $(j-1)$-sphere boundary.
This homeomorphic projection preserves the regular Thom-Smale cell decomposition of clos\ $\Sigma^j_{s_j}$\,.
In particular, any $(j-1)$-cell in the $j$-dimensional interior hemisphere $\Sigma^j_{s_j}$ possesses precisely two $j$-cell neighbors in $\Sigma^j_{s_j}$\,, separating them as a shared boundary. 
The $(j-1)$-cell $c_{v^{j-1}}\subseteq \Sigma ^ {j-1}= \partial \Sigma^j_{s_j}$ in the $j-1$-dimensional boundary, however, possesses a \emph{ unique} $j$-cell neighbor $c_{v^j}\in \Sigma^j_{s_j}$ such that 
	\begin{equation}
	c_{v^{j-1}}\subseteq \partial c_{v^j}\,.
	\label{eq:2.6}
	\end{equation}
Indeed, $c_{v^j}$ must be a $j$-cell, recursively in $j$, because \eqref{eq:2.6} implies $j-1=\dim\, c_{v^{j-1}}<\dim\, c_{v^j}\leq \dim\, \Sigma_{s_j}^j = j$\,.
This proves that \eqref{eq:2.5} defines $v^j$ uniquely, and explains why all descendants $v^j$ are well-defined, in definition \ref{def:1.1}.

Since $c_{v^j}= W^u(v^j)$ is a $j$-cell, in our construction of descendants, we immediately obtain the Morse indices
	\begin{equation}
	i(v^j)=j,
	\label{eq:2.7}
	\end{equation}
for all $0\leq j<n$.
As we have mentioned in \eqref{eq:1.5b} already, \eqref{eq:2.6} alias $v^{j-1}\in \partial W^u(v^j)$ implies $v^j\leadsto v^{j-1}$,
by \cite{he85} and the Sturm transversality property.
This proves the unique staircase  \eqref{eq:1.19b} of heteroclinic orbits, i.e.
	\begin{equation}
	\mathcal{O}\leadsto v^{n-1}\leadsto \ldots\leadsto v^1 \leadsto v^0,
	\label{eq:2.8}
	\end{equation}
with $v^j \in \mathcal{E}_{s_j}^j(\mathcal{O})$.
This heteroclinic staircase with Morse indices descending by 1, stepwise, motivates the name "descendants" for the equilibria $v^j$.
Note that Sturm transversality of stable and unstable manifolds implies transitivity of the relation "$\leadsto$".
In particular, not only does $\mathcal{O}$ connect to any $v^j\in \Sigma^j_{s_j}(\mathcal{O})$, but also
	\begin{equation}
	0\leq j < k < n \quad \Rightarrow \quad v^k \leadsto v^j.
	\label{eq:2.9}
	\end{equation}
Any heteroclinic orbit $v^j\leadsto v^{j-1}$ in the staircase of descendants, from Morse index $j$ to adjacent Morse index $j-1$, is also known to be unique; see {\cite[Lemma 3.5]{brfi89}}.

We briefly sketch an alternative possibility to construct the heteroclinic staircase \eqref{eq:2.8}, directly.
Our construction is based on the $y$-map, first constructed in \cite{brfi88} by a topological argument. 
The $y$-map allows us to identify at least one solution $u(t,x)$, with initial condition $u(0, \cdot)$ in any small sphere around $\mathcal{O}$ in $W^u(\mathcal{O})$, such that the signed zero numbers
\begin{equation}
	z(u(t, \cdot)- \mathcal{O})= j_{s_j}
	\label{eq:2.9a}
	\end{equation}
are prescribed for $t_j<t<t_{j-1},\ 0\leq j < n= i (\mathcal{O})$.
Here $t_{n-1}:= -\infty,\ t_{-1}:= +\infty$, and the remaining dropping times $t=t_j$ of the zero number $z$ can be chosen arbitrarily.
Consider sequences of $t_j$ such that the length of each finite interval $(t_j, t_{j-1})$ tends to infinity.
Separately for each $1\leq j<n$, we consider the resulting time-shifted sequences $u^j(t,\cdot) := u(t-t_{j-1}\,,\cdot)$.
Passing to locally uniformly convergent subsequences, the $u^j$ will then converge to the desired heteroclinic orbits
	\begin{equation}
	u^j(t, \cdot): \quad \mathcal{E}^j_{s_j}\ni v^j\leadsto v^{j-1} \in \mathcal{E}^{j-1}_{s_{j-1}}\,,
	\label{eq:2.9b}
	\end{equation}
for $1\leq j< n$.
Here dropping of Morse indices along any heteroclinic orbit $v\leadsto w$ implies that the $v^j$ constructed from adjacent $t$-intervals in fact coincide.
Moreover $i(v^j)=j$.
By construction of $u(0, \cdot)\in W^u (\mathcal{O})$, we also have $\mathcal{O}\leadsto v^{n-1}$.
%This argument with convergent subsequences is very similar to the argument in Henry's paper \cite{he85} on transversality.
Uniqueness of the heteroclinic staircase, however, is not obtained by the above topological construction.

Before we collect more specific properties of $+$descendants, in lemma \ref{lem:2.2}, we record a useful pigeon hole triviality which we invoke repeatedly below.

\begin{prop}\label{prop:2.1}
Let $\zeta_j\geq 0$ be a strictly increasing sequence of m integers, $0\leq j<m$, which satisfy
	\begin{equation}
	\zeta_m<m\,.
	\label{eq:2.10}
	\end{equation}
Then
	\begin{equation}
	\zeta_j=j, \qquad \mathrm{for\ all}\ j\,.
	\label{eq:2.11}
	\end{equation}
\end {prop}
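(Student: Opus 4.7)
The statement is essentially a pigeonhole/squeeze argument: a strictly increasing sequence of $m$ non-negative integers whose terms are all bounded above by $m-1$ can only be the sequence $0,1,\ldots,m-1$. (I read the condition $\zeta_m<m$ as a minor index slip for $\zeta_{m-1}<m$, so that the bound refers to the last term of the length-$m$ sequence.)

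My plan is to combine two simple monotonicity bounds. First, since the $\zeta_j$ are strictly increasing integers with $\zeta_0\geq 0$, a trivial induction gives the lower bound
\[
\zeta_j \;\geq\; \zeta_0 + j \;\geq\; j \qquad \text{for every } 0\leq j<m.
\]
Second, the hypothesis $\zeta_{m-1}<m$ combined with integrality gives the upper bound $\zeta_{m-1}\leq m-1$. Hence at the top index the two bounds collide, forcing $\zeta_{m-1}=m-1$.

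From there I would peel off the top and iterate: since $\zeta_{m-2}<\zeta_{m-1}=m-1$ and $\zeta_{m-2}$ is an integer, $\zeta_{m-2}\leq m-2$, while the general lower bound already gave $\zeta_{m-2}\geq m-2$. So $\zeta_{m-2}=m-2$. A finite downward induction on $j=m-1,m-2,\ldots,0$ yields $\zeta_j=j$ throughout, which is the claim.

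There is no real obstacle — the only thing to watch is the indexing convention (whether the sequence has $m$ or $m+1$ entries, which only shifts where the strict/weak inequality applies) and the integrality step turning the strict bound $\zeta_{m-1}<m$ into the weak bound $\zeta_{m-1}\leq m-1$. Since everything else is a one-line induction, I would present the proof as two displayed inequalities plus one sentence noting that strict monotonicity of integers forces equality throughout.
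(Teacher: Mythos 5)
Your proof is correct, and it is exactly the pigeonhole/squeeze argument the paper has in mind: the authors state this proposition as a ``pigeon hole triviality'' and give no written proof at all, so your two bounds $\zeta_j\geq j$ and $\zeta_{m-1}\leq m-1$ supply precisely the missing details. Your reading of $\zeta_m<m$ as an index slip for the last term $\zeta_{m-1}$ (equivalently, a bound on all terms of the strictly increasing sequence) is also consistent with how the proposition is invoked later, e.g.\ in the proofs of Lemma 2.2(v) and Theorem 3.1.
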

For example, the heteroclinic staircase \eqref{eq:2.8}, with $\zeta_j:=i(v^j)\geq 0$ and $m:=n=i(\mathcal{O})$, reaffirms $i(v^j)=\zeta_j=j$, as already stated in \eqref{eq:2.7}.

In the following we call $v^j$ with $j$ even the \emph{even descendants. Odd descendants} $v^j$ refer to odd $j$.
We occasionally use the abbreviations
	\begin{equation}
	v_1<_0 v_2, \ \mathrm{and}\  v_1 < _1 v_2,
	\label{eq:2.12}
	\end{equation}
to indicate that $v_1(x)< v_2(x)$ holds at $x=0$, and at $x=1$, respectively.

\begin{lem}\label{lem:2.2}
Consider the $+$descendants $v^j=v^j(\mathbf{s})$ of $\mathcal{O}$, i.e.~with constant sequence $\mathbf{s}= ++\ldots$~.
Then the following statements hold true for any $0\leq j, k<n=i(\mathcal{O})$:
	\begin{itemize}
	\item[(i)] $j<k \ \Rightarrow\ v^j>_0 v^k,  \quad at~ x=0$;
	\item[(ii)] $\mathcal{O}<_0 v^{n-1}<_0 \ldots<_0 v^0, \quad at~ x=0$;
	\item[(iii)] for even k and even descendants, 
		$\mathcal{O}<_1v^k<_1\ldots<_1v^2<_1v^0, \quad at~ x=1$;
	\item[(iv)] for odd k and odd descendants, 
		$\mathcal{O}>_1 v^k>_1\ldots>_1v^3>_1v^1, \quad at~ x=1$;
	\item[(v)]$z(v^j-v^k)=\min\{j,k\}, \quad for ~ j\neq k$;
	\item[(vi)] $+$descendancy is transitive, i.e.~$+$descendants, of $+$descendants $v^k$ of $\mathcal{O}$, are $+$descendants of $			\mathcal{O}$.
	\end{itemize}
\end{lem}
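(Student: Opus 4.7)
The plan is to establish the exact zero-number identity (v) first, from which (i)--(iv) follow by tracking signs via parity of sign changes between $x=0$ and $x=1$, and (vi) follows from (v) and (i) combined with the uniqueness clause in definition \ref{def:1.1}. The main obstacle is pinning down $z(v^k-v^j)$ \emph{exactly} rather than just bounding it: the upper bound will come from the heteroclinic $\mathcal{O}\leadsto v^k$ and the matching lower bound from the transitive heteroclinic $v^k\leadsto v^j$, by monitoring $z(u(t,\cdot)-v^j)$ along each.

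For (v), fix $j<k$ and choose a heteroclinic orbit $u(t,\cdot):\mathcal{O}\leadsto v^k$, available by \eqref{eq:2.2} since $v^k\in\mathcal{E}_+^k(\mathcal{O})$. Monitor $\zeta(t):=z(u(t,\cdot)-v^j)$. As $t\to-\infty$, $u\to\mathcal{O}$ in $C^1$ and the equilibrium difference $\mathcal{O}-v^j$ has only simple zeros (by uniqueness for the second-order Neumann ODE \eqref{eq:1.2}), so $\zeta(t)\to z(\mathcal{O}-v^j)=j$; similarly $\zeta(t)\to z(v^k-v^j)$ as $t\to+\infty$. Non-increase of $\zeta$ yields the upper bound $z(v^k-v^j)\le j$. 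For the lower bound, pick any orbit $\tilde u(t,\cdot):v^k\leadsto v^j$ provided by staircase transitivity \eqref{eq:2.9}. As $t\to+\infty$, $\tilde u$ approaches $v^j$ tangent to some stable Sturm--Liouville eigenfunction $\varphi_l$ at $v^j$ with $l\ge i(v^j)=j$, so $z(\tilde u(t,\cdot)-v^j)\to l\ge j$; non-increase propagates the bound back to $t\to-\infty$, yielding $z(v^k-v^j)\ge j$. Hence $z(v^k-v^j)=j=\min\{j,k\}$.

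For (i), reuse the orbit $\mathcal{O}\leadsto v^k$. The monitoring now shows $\zeta(t)\equiv j$ for all $t$, so $u(t,\cdot)-v^j$ never develops a multiple zero. The Neumann conditions $u_x(t,0)=0=v^j_x(0)$ would force any zero at $x=0$ to be multiple; such a zero is therefore impossible, so $u(t,0)-v^j(0)$ is nonvanishing and has constant sign in $t$. At $t=-\infty$ this sign matches $\mathcal{O}(0)-v^j(0)<0$ (since $z(v^j-\mathcal{O})=j_+$), and in the limit $t\to+\infty$ we get $v^k(0)-v^j(0)\le 0$; strict inequality follows from the same uniqueness argument applied to the equilibrium difference $v^k-v^j$ under Neumann data. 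This proves $v^k(0)<v^j(0)$, which is (i); statement (ii) then follows by chaining (i) with the anchor $\mathcal{O}<_0 v^{n-1}$, which is just the $+$ part of $z(v^{n-1}-\mathcal{O})=(n-1)_+$.

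For (iii)--(iv), the difference $v^{k-2}-v^k$ has exactly $k-2$ simple interior zeros by (v) and is positive at $x=0$ by (i), so its sign at $x=1$ is multiplied by $(-1)^{k-2}$. For even $k$ the sign is preserved, producing $v^k<_1 v^{k-2}<_1\cdots<_1 v^0$; combining with $\mathcal{O}<_1 v^k$ (which comes from $z(v^k-\mathcal{O})=k_+$ at even $k$) yields (iii). For odd $k$ the sign flips, producing $v^k>_1 v^{k-2}>_1\cdots>_1 v^1$, which pairs with $\mathcal{O}>_1 v^k$ (odd parity) to give (iv). For (vi), let $v^k$ be a $+$descendant of $\mathcal{O}$. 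I verify inductively in $j\in\{0,\ldots,k-1\}$ that the $j$-th $+$descendant of $v^k$ coincides with $v^j$: by (v) and (i), $z(v^j-v^k)=j_+$, and \eqref{eq:2.9} gives $v^k\leadsto v^j$, so $v^j\in\mathcal{E}_+^j(v^k)$; the staircase heteroclinic $v^j\leadsto v^{j-1}$ places $v^{j-1}\in\partial c_{v^j}$; the uniqueness clause in definition \ref{def:1.1} then identifies the $j$-th $+$descendant of $v^k$ with $v^j$, proving (vi).
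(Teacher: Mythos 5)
Your proof is correct, and for parts (i)--(v) it is organized genuinely differently from the paper's. The paper first proves (i), (iii), (iv) by indirect arguments: assuming the wrong ordering places $v^j$ strictly between $\mathcal{O}$ and $v^k$ at the relevant Neumann boundary, which forces a strict drop of $z(u(t,\cdot)-v^j)$ along the heteroclinic $\mathcal{O}\leadsto v^k$ and contradicts the stable-manifold lower bound $z(v^k-v^j)\geq i(v^j)=j$ of \eqref{eq:2.15}; claim (v) is then extracted by a pigeonhole argument from the crude upper bound $z(v^j-v^k)<k$ of \eqref{eq:2.14} together with the strict increase of $j\mapsto z(v^j-v^k)$ forced by the alternating orders (iii)--(iv) at $x=1$. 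You instead pin down $z(v^k-v^j)=j$ at once, sandwiching it between that same lower bound and the sharper upper bound $z(v^k-v^j)\leq z(\mathcal{O}-v^j)=j$ obtained by monitoring $z(u(t,\cdot)-v^j)$ along $\mathcal{O}\leadsto v^k$ --- an estimate the paper does not exploit. Constancy of that zero number then excludes multiple zeros, hence (by the Neumann condition) boundary zeros, freezing the sign of $u(t,0)-v^j(0)$ and giving (i) directly; (iii)--(iv) follow from (v), (i) and the parity of the simple sign changes of equilibrium differences. Your route avoids both the case-by-case indirect arguments and the pigeonhole, at the modest cost of re-deriving the stable-manifold asymptotics behind \eqref{eq:2.15}, which the paper simply cites; part (vi) is proved exactly as in the paper.
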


\begin{proof}[\textbf{Proof:}]
To prove (i), indirectly, suppose $v^j<_0 v^k$.
For the $+$descendant $v^j\in \Sigma^j_+(\mathcal{O})$ we have $\mathcal{O} <_0 v^j$.
Therefore $v^j$ is between $\mathcal{O}$ and $v^k$, at $x=0$.
For the heteroclinic orbit $u(t, \cdot)$ from $\mathcal{O}$ to $v^k$ this implies the strict dropping
	\begin{equation}
	j=z(\mathcal{O}-v^j)= \lim_{t\rightarrow -\infty} z(u(t, \cdot)-v^j)> \lim_{t\rightarrow+\infty}z(u(t, \cdot)-v^j)= z(v^k-v^j);
	\label{eq:2.13}
	\end{equation}
see \eqref{eq:1.3}, \eqref{eq:1.4}.
Indeed, for $u(t_0, 0)=v^j(0)$, a multiple zero of $x\mapsto u(t_0,x)- v^j(x)$ occurs at the left Neumann boundary $x=0$.

On the other hand, the $z$-inequalities
	\begin{eqnarray}
		u\in W^u (v)\quad &\Rightarrow \quad z(u-v)< i(v) 
	\label{eq:2.14} \\
		u\in W^s (v) \setminus  \{v\}\quad &\Rightarrow \quad z(u-v)\geq i(v) 
	\label{eq:2.15}
	\end{eqnarray}
were already observed in \cite{brfi86}; see also \cite{firo13} for a more recent account.
Hence the heteroclinic orbit $u(t, \cdot)\in W^s(v_j)\setminus\{v_j\}$ from $v^k$ to $v^j$, for $k>j$, induced by the +descending heteroclinic staircase \eqref{eq:2.8}, \eqref{eq:2.9}, implies
	\begin{equation}
	z(v^k-v^j)=\lim_{t\mapsto -\infty} z(u(t, \cdot)-v^j)\geq i(v^j)=j,
	\label{eq:2.16}
	\end{equation}
in view of the Morse indices \eqref{eq:2.7}.
The contradiction between \eqref{eq:2.16} and \eqref{eq:2.13} proves claim (i).

Claim (ii) is an immediate consequence of $\mathcal{O}< _0 v^{n-1}\in \Sigma^{n-1}_+ (\mathcal{O})$ and property (i).

We prove claim (iii) next, where $0\leq j<k$ are both even.
Suppose, indirectly, that $v^j<_1 v^k$.
Since $s_j=+$ indicates $v^j\in \Sigma^j_+(\mathcal{O})$, we have $\mathcal{O}<_0 v^j$ and $z(v^j-\mathcal{O})=j$.
Since $j$ is even, we also have $\mathcal{O}<_1 v^j<_1 v^k$.
As in \eqref{eq:2.13}, strict dropping of $z(u(t,\cdot)-v^j)$ for $u(t,\cdot):\ \mathcal{O}\leadsto v^k$, this time when $u(t_0, 1)=v^j(1)$ at $x=1$, therefore implies
	\begin{equation}
	j=z(\mathcal{O}-v^j)> z(v^k-v^j).
	\label{eq:2.17}
	\end{equation}
As in \eqref{eq:2.15}, \eqref{eq:2.16}, transitive $v^k\leadsto v^j$ on the other hand implies
	\begin{equation}
	z(v^k-v^j)\geq i(v^j)=j.
	\label{eq:2.18}
	\end{equation}
This contradiction proves claim (iii) on even $j,k$.

The case (iv) of odd $j,k$ is analogous.
We just argue indirectly, for odd $j<k$ and $v^k<_1 v^j$, via $\Sigma^j_+(\mathcal{O})\ni v^j<_1\mathcal{O}$.

To prove claim (v), consider $0\leq j<k<n=i(\mathcal{O})$.
To show $0\leq \zeta_j := z(v^j-v^k)=j$, for those $j$, we invoke the pigeon hole proposition \ref{prop:2.1}.
Assumption \eqref{eq:2.10} holds, for $m:=k$, because $v^k\leadsto v^j$ and \eqref{eq:2.14} imply $0\leq z(v^j-v^k)<i(v_k)=k$.
To show that the sequence $\zeta_j$ increases strictly, with $j$, we compare $\zeta_{j-1}$ and $\zeta_j$ for $1\leq j< k$.
Since $j-1$ and $j$ are of opposite even/odd parity, $v^{j-1}$ and $v^j$ lie on opposite sides of $v^k$, at $x=1$; see (iii), (iv).
Therefore $v^j\leadsto v^{j-1}$ implies strict dropping of $z$
	\begin{equation}
	\zeta_j= z(v^j-v^k)> z(v^{j-1}-v^k)=\zeta_{j-1} \,.
	\label{eq:2.19}
	\end{equation}
Hence pigeon hole proposition \ref{prop:2.1} proves claim (v).

It remains to prove claim (vi).
Consider the $+$descendants $v^k\in\Sigma^k_+(\mathcal{O})$.
In view of definition \ref{eq:2.1}, \eqref{eq:1.18}, \eqref{eq:1.19a}, and \eqref{eq:2.2}, the $+$descendants of $\mathcal{O}$ are uniquely characterized by the descendant heteroclinic staircase
	\begin{equation}
	\mathcal{O}\leadsto v^{n-1}\leadsto \ldots \leadsto v^k \leadsto \ldots \leadsto v^j\leadsto \ldots \leadsto v^0
	\label{eq:2.20}
	\end{equation}
together with the conditions
	\begin{equation}
	z(v^j-\mathcal{O})=j_+ \, .
	\label{eq:2.21}
	\end{equation}
To show that the unique $+$descendants $\tilde{v}^j\in\Sigma^j_+(v^k)$ of $v^k$, for $0\leq j<k$, coincide with the $+$descendants $v^j$ of $\mathcal{O}$, it only remains to show
	\begin{equation}
	z(v^j-v^k)= j_+ \, .
	\label{eq:2.22}
	\end{equation}
Property (v) asserts $z(v^j-v^k)=j$, since $0 \leq j < k$.
Ordering (i) asserts $v^j-v^k>_00$.
This proves \eqref{eq:2.22}, claim (vi), and the lemma.
\end{proof}

We conclude this section with an illustration of the action, on lemma 2.2 (ii)--(iv), of the four trivial equivalences generated by $u\mapsto -u$ and $x\mapsto 1-x$ from \eqref{eq:1.27}. 
First note that (ii) refers to a \emph{monotone order} of all descendants $v^k$, whereas (iii) and (iv) address the \emph{alternating order}, depending on the even/odd parity of $k$, at the opposite end of the $x$-interval.
The trivial equivalence $u\mapsto -u$ flips the monotone order (ii) into the opposite monotone order $\mathcal{O}>v^{n-1}>\ldots>v^0$, at $x=0$, which corresponds to constant  $s_j=-$.
The trivial equivalence $x\mapsto 1-x$, in contrast, makes the monotone order (ii) and its opposite appear at $x=1$, respectively.
Therefore, the four trivial equivalences are characterized by the unique one of the four half axes of $u$, at $x=0$ and $x=1$, where all descendants are ordered monotonically. 
The alternating orders appear on the $x$-opposite $u$-axis.
In summary, $x \mapsto 1-x$ swaps constant with alternating sign sequences $\mathbf{s}$,
whereas $u \mapsto -u$ reverses the sign of $\mathbf{s}$.

%%%%%%%%%%%%%%%%%%%%%%%%%%%%%%%%%%%%%%%%%%%%%%%%%%%%%%%

\section{First descendants and nearest neighbors}\label{sec3}

In this section we prove our main result, theorem \ref{thm:1.2}. 
As explained in the introduction, the trivial equivalences \eqref{eq:1.27} reduce the four cases \eqref{eq:1.22}--\eqref{eq:1.25} to the single case $\mathbf{s}=++\ldots$ of $+$descendants $v^k=v^k(++\ldots)$ with $k=n-1,\  n:=i(\mathcal{O})$; see \eqref{eq:1.26}, \eqref{eq:1.28}.
We also recall the notation $v=\underline{v}^k$ of \eqref{eq:1.29} for the equilibrium $v\in \mathcal{E}^k_+(\mathcal{O})\subseteq \Sigma^k_+(\mathcal{O})$ which is closest to $\mathcal{O}$ at $x=1$.
In theorem \ref{thm:3.1} below, we show $\underline{v}^k= v^k$, for all $0\leq k<n$.
Invoking theorem \ref{thm:3.1} for the special case $k=n-1$, we will then prove theorem \ref{thm:1.2}.
\begin{thm} \label{thm:3.1}
With the above notation, and in the setting of the introduction,
	\begin{equation}
	\underline{v}^k=v^k
	\label{eq:3.1}
	\end{equation}
holds for all $0\leq k< n = i(\mathcal{O})$.	

This result holds true, independently of the particular Morse indices $i(w_\pm^\iota)$ of the immediate $h_\iota$-neighbors $i(w_\pm^\iota)$ of $\mathcal{O}$. 
\end{thm}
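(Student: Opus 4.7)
I plan to prove the equality $\underline{v}^k = v^k$ by induction on $k \in \{0, 1, \ldots, n-1\}$. The base case $k=0$ is immediate: by \eqref{eq:1.17}, $\Sigma^0_+(\mathcal{O}) = \mathcal{E}^0_+(\mathcal{O}) = \{v^0\}$ is a singleton, so $\underline{v}^0 = v^0$ trivially.

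For the inductive step, assume $\underline{v}^j = v^j$ for all $0 \leq j < k$, and set $w := \underline{v}^k \in \mathcal{E}^k_+(\mathcal{O}) \subseteq \Sigma^k_+(\mathcal{O})$. By Definition \ref{def:1.1}, $v^k$ is the unique equilibrium in $\Sigma^k_+(\mathcal{O})$ satisfying $v^{k-1} \in \partial c_{v^k}$, equivalently $v^k \leadsto v^{k-1}$. It therefore suffices to establish the heteroclinic connection $w \leadsto v^{k-1}$; uniqueness in Definition \ref{def:1.1} will then force $w = v^k$.

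The core of the proof is a two-stage zero-number dropping argument. If $w \neq v^k$, then the minimality of $w$ at $x=1$ and the alternating orderings of Lemma \ref{lem:2.2}(iii)-(iv) place $w(1)$ strictly between $\mathcal{O}(1)$ and $v^k(1)$ on the $(-1)^k$-side of $\mathcal{O}(1)$. Tracking the zero number along the heteroclinic orbit $u(t,\cdot)\colon \mathcal{O} \leadsto v^k$, the continuous path $t \mapsto u(t, 1)$ hits the value $w(1)$ at some $t_0$; at $t_0$ the shared Neumann boundary condition forces $x=1$ to be a multiple zero of $u(t_0,\cdot) - w$, and the strict Sturm dropping following \eqref{eq:1.4} yields $z(v^k - w) < z(\mathcal{O} - w) = k$. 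Combined with the $W^u/W^s$ zero-number estimates \eqref{eq:2.14}-\eqref{eq:2.15} and the impossibility of $w \leadsto v^k$ (ruled out by $i(w) \leq \dim\Sigma^k_+(\mathcal{O}) = k = i(v^k)$), this constrains either $i(w) \leq k-1$ or the complete absence of a heteroclinic between $v^k$ and $w$. A second dropping analysis, now along the heteroclinic $\mathcal{O} \leadsto w$ and comparing with $v^{k-1}$, produces a cascade of Morse-index-adjacent heteroclinics from $w$; invoking the inductive identification $v^{k-1} = \underline{v}^{k-1}$, the uniqueness of heteroclinics between adjacent Morse indices (\cite[Lemma 3.5]{brfi89}), and the staircase transitivity \eqref{eq:2.9}, the terminal step of the cascade must be $w \leadsto v^{k-1}$.

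The principal obstacle is pinning down the terminus of this second cascade: one must verify that the drop actually ends at $v^{k-1}$ itself, inside the sub-hemisphere $\Sigma^{k-1}_+(\mathcal{O})$, rather than at some other element of $\mathcal{E}^{k-1}_\pm(\mathcal{O})$. Here I would appeal to the unique-$j$-cell-neighbor property of Schoenflies hemispheres from \cite{firo13} -- already invoked in Section \ref{sec2} for the well-definedness of descendants -- together with the signed orderings of Lemma \ref{lem:2.2}(i),(v), to rule out the wrong-signed or non-adjacent candidates and thereby single out $v^{k-1}$ as the only admissible target.
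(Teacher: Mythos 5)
There is a genuine gap at the heart of your argument. Your reduction is sound in principle: since $\underline{v}^k\in\mathcal{E}^k_+(\mathcal{O})$, it would suffice to prove $\underline{v}^k\leadsto v^{k-1}$, because the unique-$k$-cell-neighbor property from \cite{firo13} (used in section \ref{sec2} to define $v^k$) would then force $\underline{v}^k=v^k$. But the step that is supposed to deliver this heteroclinic connection --- the ``second dropping analysis'' producing a ``cascade of Morse-index-adjacent heteroclinics from $w=\underline{v}^k$'' --- is never actually constructed. A zero-number comparison of the orbit $\mathcal{O}\leadsto w$ with $v^{k-1}$ yields inequalities for $z(w-v^{k-1})$, not heteroclinic orbits emanating from $w$; note also that $v^{k-1}\in\Sigma^{k-1}_+(\mathcal{O})$ lies on the \emph{opposite} side of $\mathcal{O}$ from $w$ at $x=1$, so no dropping is forced there at all. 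To manufacture a cascade out of $w$ (e.g.\ via the $y$-map) you would first need $i(w)=k$, which you neither assume nor prove (your own text only concludes ``either $i(w)\le k-1$ or \dots''), and without $i(w)=k$ the relation $w\leadsto v^{k-1}$ is impossible anyway, since $\leadsto$ strictly decreases Morse indices. Your closing paragraph concedes exactly this: identifying the terminus of the cascade is deferred to an unspecified appeal to the Schoenflies cell structure and Lemma \ref{lem:2.2}(i),(v), which by themselves only restate the definition of $v^k$ and do not show that $\underline{v}^k$ connects to $v^{k-1}$.

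The paper's proof avoids this obstacle entirely, and the decisive idea is one your proposal does not use: run the boundary dropping argument along \emph{every} step of the descendant staircase \eqref{eq:2.8}, not just along $\mathcal{O}\leadsto v^k$. If $\underline{v}^k\neq v^k$, then $\underline{v}^k$ lies strictly between $\mathcal{O}$ and $v^k$ at $x=1$; by the alternating orders of Lemma \ref{lem:2.2}(iii),(iv) it therefore lies strictly between $v^j$ and $v^{j-1}$ at $x=1$ for every $1\le j\le k$, so dropping along each heteroclinic $v^j\leadsto v^{j-1}$ forces $\zeta_j:=z(v^j-\underline{v}^k)>\zeta_{j-1}\ge 0$. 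Since $\zeta_k=z(v^k-\underline{v}^k)\le k-1$ by \eqref{eq:3.5a}, the $k+1$ strictly increasing nonnegative integers $\zeta_0<\dots<\zeta_k$ cannot fit into $\{0,\dots,k-1\}$, and proposition \ref{prop:2.1} gives the contradiction. No induction on $k$, no identification of $i(\underline{v}^k)$, and no heteroclinic emanating from $\underline{v}^k$ is needed. I recommend you replace the second stage of your argument with this pigeonhole count.
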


\begin{proof}[\textbf{Proof:}]
%Let us normalize $\mathcal{O}\equiv0$, without loss of generality.
Let  $v$ denote any equilibrium in $\mathcal{E}_+^k(\mathcal{O})$, i.e. $z(v-\mathcal{O})=k_+$\,.
At $x=1$ this implies $(-1)^k(v - \mathcal{O}) >_1  0$.
In other words, all equilibria $v\in \mathcal{E}_+^k(\mathcal{O})$ are on the same side of $\mathcal{O}$, at $x=1$.

To prove \eqref{eq:3.1} indirectly, suppose $\underline{v}^k \neq v^k$.
Since $\underline{v}^k, v^k\in \mathcal{E}^k_+(\mathcal{O})$, the definition \eqref{eq:1.29} of $\underline{v}^k$ implies that $\underline{v}^k$ is strictly closer to $\mathcal{O}$ than $v^k$, at $x=1$:
	\begin{equation}
	\mathcal{O}<_1(-1)^k \underline{v}^k<_1 (-1)^k v^k.
	\label{eq:3.2}
	\end{equation}
Consider any $1\leq j\leq k<n$.
Then the part
	\begin{equation}
	v^k\leadsto \ldots \leadsto v^j \leadsto v^{j-1}\leadsto \ldots \leadsto v^0
	\label{eq:3.3}
	\end{equation}
of the descending heteroclinic staircase \eqref{eq:1.19b} of the $\mathcal{O}$ descendants implies $v^j\leadsto v^{j-1}$.
Lemma \ref{lem:2.2}(iii), (iv) and \eqref{eq:3.2} imply that $\underline{v}^k$ is strictly between $v^j$ and $v^{j-1}$ at $x=1$, due to the opposite parity of $j-1$ and $j$, mod 2.
Strict dropping of the zero number $t \mapsto z(u(t,\cdot)-\underline{v}_k)$ along the heteroclinic orbit $u(t,\cdot)$ from $v^j$ to $v^{j-1}$, at the boundary  $x=1$, therefore implies
	\begin{equation}
	\zeta_j:=z(v^j-\underline{v}^k)>z(v^{j-1}- \underline{v}^k)=\zeta_{j-1}\geq0.
	\label{eq:3.4}
	\end{equation}
We have already used dropping arguments of this type in our proof of lemma \ref{lem:2.2}, repeatedly.
In \cite[Lemma 3.6(i)]{firo13}, on the other hand, we have already observed that
	\begin{equation}
	\label{eq:3.5a}
	z(u_1-u_2) < k
	\end{equation}
for any two elements $u_1\,, u_2$ of the same closed hemisphere $\mathrm{clos}\,\Sigma^k_+ (\mathcal{O})$.
In particular
	\begin{equation}
	\zeta_k = z(v^k-\underline{v}^k)\leq k-1,
	\label{eq:3.5}
	\end{equation}
for the distinct equilibria $v^k, \underline{v}^k \in \Sigma^k_+ (\mathcal{O})$.
By the standard pigeon hole argument of proposition \ref{prop:2.1}, however, the $k+1$ distinct integers $0\leq\zeta_0<\ldots<\zeta_k$ of \eqref{eq:3.4} cannot fit into the $k$ available slots ${0, \ldots, k-1}$ of \eqref{eq:3.5}.
This contradiction proves the theorem.	
\end{proof}	

\begin{proof}[\textbf{Proof of theorem \ref{thm:1.2}:}]
Suppose first that $n=i(\mathcal{O})>0$ is even.
Assume $i(w^1_-)=n-1$ for the $h_1$-predecessor $w^1_-$ of $\mathcal{O}$ at $x=1$, see \eqref{eq:1.21}.
We then have to show assertion \eqref{eq:1.24}, i.e.
	\begin{equation} 
	w^1_-=v^{n-1} (++\ldots)=v^{n-1},
	\label{eq:3.6}
	\end{equation}
in the notation of the present section.
In particular we may consider $+$descendants $\mathbf{s}= ++\ldots$\ .

We first claim $\mathcal{O}\leadsto w^1_-$\,.
To prove that claim we recall Wolfrum's Lemma; see \cite{wo02} and \cite[Appendix]{firo3d-2}: for equilibria $v_1, v_2$ with $i(v_1)>i(v_2)$ we have $v_1\leadsto v_2$ if, and only if, there does not exist any equilibrium $w$ with boundary values strictly between $v_1$ and $v_2$\,, at $x=0$ or $x=1$ alike, such that
	\begin{equation}
	z(v_1-w)=z(w-v_2)=z(v_1-v_2).
	\label{eq:3.7}
	\end{equation}
For $v_1:=\mathcal{O}$ and $v_2:=w^1_-$\,, by definition \eqref{eq:1.15} of $w^1_-$ as the predecessor of $\mathcal{O}$ at $x=1$, there do not exist any equilibria at all between $v_1$ and $v_2$\,, at $x=1$.
Therefore $i(\mathcal{O})=n>n-1=i(w^1_-)$ implies $\mathcal{O}\leadsto w^1_-$\,, as claimed.

We claim $w^1_-\in \mathcal{E}^{n-1}_+(\mathcal{O})$ next.
Since $\mathcal{O}\leadsto w^1_-$ with adjacent Morse indices $n$ and $n-1$, properties \eqref{eq:2.14},\eqref{eq:2.15} of the zero numbers on unstable and stable manifolds imply
	\begin{equation}
	\label{eq:3.8}
	n= i(\mathcal{O})> z(\mathcal{O}-w^1_-)\geq i(w^1_-)= n-1,
	\end{equation}
i.e. $z(\mathcal{O}- w^1_-)=n-1$.
Definition \eqref{eq:1.15} of $w^1_-$ as the predecessor of $\mathcal{O}$ at $x=1$ implies $w^1_- <_1 \mathcal{O}$.
Since $n$ is even and $z(\mathcal{O}- w^1_-)=n-1$ is odd, this implies $w^1_->_0 \mathcal{O}$, at $x=0$.
In view of definition \eqref{eq:1.10}, this proves $w^1_-\in \mathcal{E}^{n-1}_+(\mathcal{O})$.
As predecessor of $\mathcal{O}$, therefore, $w^1_-$ is closest to $\mathcal{O}$ at $x=1$ in $\mathcal{E}^{n-1}_+(\mathcal{O})$.
In other words $w^1_-=\underline{v}^{n-1}$, by definition \eqref{eq:1.29} of $\underline{v}^{n-1}$.
Invoking theorem \ref{thm:3.1} for $k=n-1$ therefore shows
	\begin{equation}
	\label{eq:3.9}
	w^1_-=\underline{v}^{n-1}= v^{n-1},
	\end{equation}
as claimed in \eqref{eq:3.6}, for even $n$.

For odd $n$ and even $n-1$, we can repeat the exact same steps for the successor $w^1_+$ of $\mathcal{O}$ at $x=1$, instead of the predecessor $w^1_-$\,.
This proves theorem \ref{thm:1.2} for $\mathbf{s}=++\ldots$\ .

The remaining cases of constant or alternating $\mathbf{s}$ follow by the trivial equivalences \eqref{eq:1.27}, as already indicated in the introduction.
\end{proof}

%%%%%%%%%%%%%%%%%%%%%%%%%%%%%%%%%%%%%%%%%%%%%%%%%%%%%%%

\section{Minimax: the range of hemispheres}\label{sec4}

For the $+$descendants $v^k=v^k(++\ldots)$ of $\mathcal{O}$, with $0\leq k<n:= i(\mathcal{O})$, we have shown
	\begin{equation}
	v^k=\underline{v}^k
	\label{eq:4.1}
	\end{equation}
in the previous section.
See theorem \ref{thm:3.1}, where $\underline{v}^k$ denoted the equilibrium closest to $\mathcal{O}$, at $x=1$, in the hemisphere $\Sigma^k_+(\mathcal{O})$.
In theorem \ref{thm:4.3} of the present section we show the alternative characterization
	\begin{equation}
	v^k=\bar{v}^k,
	\label{eq:4.2}
	\end{equation}
where $\bar{v}^k$ denotes the equilibrium most distant from $\mathcal{O}$ at the opposite boundary $x=0$, in the same hemisphere $\Sigma^k_+(\mathcal{O})$.
See definitions \eqref{eq:1.29}, \eqref{eq:1.30} of $\underline{v}^k, \bar{v}^k$.
In particular, the minimax formulation
	\begin{equation}
	\underline{v}^k=v^k=\bar{v}^k,
	\label{eq:4.3}
	\end{equation}
shows how, within $\Sigma_k^+(\mathcal{O})$, minimal distance from $\mathcal{O}$, along the meander axis $h_1$ of $x=1$,  coincides with maximal distance from $\mathcal{O}$, along the meander $h_0$ of $x=0$.

Throughout this section we fix $k$.
In lemma \ref{lem:4.1} we show 
	\begin{equation}
	i(\bar{v}^k)=k,
	\label{eq:4.4}
	\end{equation}
in correspondence to $i({v}^k)=k$.
We then study the $+$descendants $w^j\in \Sigma^j_+(\bar{v}^k)$ of $\bar{v}^k$, for $0\leq j<k$.
In lemma \ref{lem:4.2}, in particular, we show
	\begin{equation}
	z(w^{k-1}- \mathcal{O})=k-1.
	\label{eq:4.5}
	\end{equation}
Combining \eqref{eq:4.4} and \eqref{eq:4.5} will then prove the claim \eqref{eq:4.2} of theorem \ref{thm:4.3}.

We conclude the section, in corollary \ref{cor:4.4}, with a summary of our results for all four cases of constant and alternating descendants.

\begin{lem}\label{lem:4.1}
Claim \eqref{eq:4.4} holds true, i.e., $i(\bar{v}^k)=k$ for any $0\leq k < n = i(\mathcal{O})$.
\end{lem}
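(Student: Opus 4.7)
The upper bound $i(\bar{v}^k) \leq k$ is immediate: from $\bar{v}^k \in \mathcal{E}^k_+(\mathcal{O})$ we have $\mathcal{O} \leadsto \bar{v}^k$ with $z(\mathcal{O} - \bar{v}^k) = k$, and the stable-manifold zero-number inequality \eqref{eq:2.15}, applied to $\mathcal{O} \in W^s(\bar{v}^k) \setminus \{\bar{v}^k\}$, yields $k \geq i(\bar{v}^k)$.

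For the matching lower bound, I plan to argue by contradiction. Suppose $m := i(\bar{v}^k) < k$. Then $c_{\bar{v}^k}$ is an $m$-cell located in the open $k$-dimensional hemisphere $\Sigma^k_+(\mathcal{O})$, so the Schoenflies regularity recalled around \eqref{eq:2.3}--\eqref{eq:2.6}, following \cite{firo13}, produces a $k$-cell $c_{v^*} \subseteq \Sigma^k_+(\mathcal{O})$ with $c_{\bar{v}^k} \subseteq \partial c_{v^*}$. This yields an equilibrium $v^* \in \mathcal{E}^k_+(\mathcal{O})$ with $i(v^*) = k$ and $v^* \leadsto \bar{v}^k$ by \eqref{eq:1.5b}. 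In particular $\bar{v}^k \neq v^k$, so the defining maximality of $\bar{v}^k$ at $x = 0$ forces $\bar{v}^k >_0 v^k$ strictly.

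The contradiction should now be reached by a pigeon-hole argument modelled on the proof of theorem \ref{thm:3.1}. Letting $\zeta_j := z(v^j - \bar{v}^k)$ for the $+$descendants $v^j$ of $\mathcal{O}$, $0 \leq j \leq k$, the hemisphere bound \eqref{eq:3.5a} gives $\zeta_k \leq k - 1$; thus the $k+1$ non-negative integers $\zeta_0, \ldots, \zeta_k$ cannot fit into $\{0, \ldots, k-1\}$ once they are shown to strictly increase. I would establish the drops $\zeta_{j-1} < \zeta_j$ via zero-number dropping along each heteroclinic $v^j \leadsto v^{j-1}$ of the descendant staircase \eqref{eq:2.8}, combining two mechanisms: at $x = 1$, the alternating position of $v^j$ and $v^{j-1}$ around $\mathcal{O}$ guaranteed by lemma \ref{lem:2.2}(iii)--(iv) triggers a multiple zero with $\bar{v}^k$ whenever $\bar{v}^k(1)$ sits between the two; at $x = 0$, the monotone ordering of lemma \ref{lem:2.2}(ii) together with $\bar{v}^k >_0 v^k$ localizes $\bar{v}^k(0)$ into some interval $(v^{j^*}(0), v^{j^*-1}(0))$ and triggers a multiple zero at that step.

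The main obstacle will be ensuring strict dropping for \emph{every} $j \in \{1, \ldots, k\}$, since $\bar{v}^k$ is characterised as an extremum at $x = 0$ rather than at $x = 1$ as in theorem \ref{thm:3.1}, and its position at $x = 1$ is not controlled a priori; the $x = 1$ mechanism can fail whenever $\bar{v}^k(1)$ overshoots the corresponding same-parity descendant, while the $x = 0$ mechanism delivers only a single drop. To close this gap I expect to invoke the auxiliary equilibrium $v^*$ directly: the heteroclinic $v^* \leadsto \bar{v}^k$ and the strict bound $z(v^* - \bar{v}^k) \leq k - 1$ should either supply an additional drop in the chain or allow $v^*$ to replace $v^k$ in the pigeon-hole count. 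Once the strict chain $\zeta_0 < \zeta_1 < \cdots < \zeta_k \leq k-1$ is complete, proposition \ref{prop:2.1} forces the required contradiction and we conclude $i(\bar{v}^k) = k$.
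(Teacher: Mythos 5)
Your upper bound is essentially right (modulo the slight abuse that $\mathcal{O}$ itself does not lie in $W^s(\bar{v}^k)$; one applies \eqref{eq:2.15} along the heteroclinic orbit and passes to the limit as in \eqref{eq:2.16}, or, more directly, observes $W^u(\bar{v}^k)\subseteq\Sigma^k_+(\mathcal{O})$ and counts dimensions). The genuine gap is the one you flag yourself: the strict chain $\zeta_0<\zeta_1<\cdots<\zeta_k$ cannot be forced by the two dropping mechanisms you describe, and the fallback via $v^*$ is a hope, not an argument. Concretely, take $k$ even and suppose $\mathcal{O}<_1 v^k<_1\bar{v}^k<_1 v^{k-2}$ at $x=1$, while $\bar{v}^k>_0 v^0>_0\cdots>_0 v^k$ at $x=0$. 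Then at the step $j=k$ the equilibrium $\bar{v}^k$ lies between $v^k$ and $v^{k-1}$ at neither boundary ($v^k$ and $v^{k-1}$ straddle $\mathcal{O}$ at $x=1$ and $\bar{v}^k$ lies outside the segment from $\mathcal{O}$ to $v^k$ there), so no multiple zero is forced along $v^k\leadsto v^{k-1}$ and only $\zeta_0<\cdots<\zeta_{k-1}\leq k-1$ survives: $k$ integers in $k$ slots, no contradiction. The auxiliary $v^*$ does not repair this. The regularity argument does give some $k$-cell $c_{v^*}$ with $v^*\leadsto\bar{v}^k$ and $z(v^*-\bar{v}^k)\leq k-1$ by \eqref{eq:3.5a}, but with \emph{no control on the sign of $v^*-\bar{v}^k$ at $x=0$}; so $v^*$ neither contradicts the maximality \eqref{eq:1.30} directly nor slots into the staircase to supply the missing drop.

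The missing idea — and the route the paper actually takes — is to manufacture an equilibrium of $\Sigma^k_+(\mathcal{O})$ that provably lies \emph{above} $\bar{v}^k$ at $x=0$. Under the indirect hypothesis $i(\bar{v}^k)<k$, one perturbs $\bar{v}^k$ inside the flow-invariant open hemisphere $\Sigma^k_+(\mathcal{O})$ in the positive $\varphi_{k-1}$-eigendirection, using the homeomorphic eigenprojection $P^k$ of \cite{firo13} to choose $u_0$ with $z(u_0-\bar{v}^k)=(k-1)_+$; the bound \eqref{eq:3.5a} combined with zero-number dropping pins $z(u(t,\cdot)-\bar{v}^k)\equiv(k-1)_+$ along the entire backward orbit, so its $\alpha$-limit $\tilde{v}$ satisfies $z(\tilde{v}-\bar{v}^k)=(k-1)_+$, hence $\tilde{v}>_0\bar{v}^k$; a dimension count $i(\tilde{v})>k-1$ keeps $\tilde{v}$ off the equator $\Sigma^{k-1}(\mathcal{O})$, so $\tilde{v}\in\mathcal{E}^k_+(\mathcal{O})$ contradicts the defining maximality of $\bar{v}^k$. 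It is precisely the choice of the $\varphi_{k-1}$-direction with the $+$ sign that supplies the sign information your $v^*$ lacks; without some such dynamical (or equivalent) input, the combinatorial pigeon-hole alone does not close.
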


\begin{proof}[\textbf{Proof:}]
We first show $i(\bar{v}^k)= \dim W^u(\bar{v}^k)\leq k.$
Indeed
	\begin{equation}
	\bar{v}^k\in \mathcal{E}^k_+(\mathcal{O})\subseteq \Sigma^k_+(\mathcal{O}) = \bigcup \limits_{v\in \mathcal{E}^k_+(\mathcal{O})}W^u(v)
	\label{eq:4.6}
	\end{equation}
implies $W^u(\bar{v}^k)\subseteq \Sigma^k_+(\mathcal{O})$, and hence $\dim W^u(\bar{v}^k) \leq \dim \Sigma^k_+(\mathcal{O}) = k$.

To prove $i(\bar{v}^k)=k$, indirectly, we may therefore suppose $i(\bar{v}^k)<k$.
We will then reach a contradiction in two steps, below.
In step 1 we show that there exists an initial condition $u_0\in \Sigma_+^k$ such that
	\begin{equation}
	\label{eq:4.6a}
	z(u_0-\bar{v}^k) = (k-1)_+\,.
	\end{equation} 
Since $u_0$ is chosen in the invariant open hemisphere $\Sigma_+^k$\,, we can follow the nonlinear backwards trajectory $u(t,\cdot)$ from $u_0$\,, for $t\leq 0$, and define a second equilibrium $\tilde{v}$ as the $\alpha$-limit set of $u(t,\cdot)$.
In step 2 we then show that $\tilde{v}$ satisfies 
	\begin{equation}
	\label{eq:4.6b}
	\tilde{v} \in \Sigma^k_+(\mathcal{O}) \quad \mathrm{and} \quad \tilde{v}>_0 \bar{v}^k >_0 \mathcal{O}.
	\end{equation}
But \eqref{eq:4.6b} contradicts maximality of $\bar{v}^k$ in $\Sigma^k_+(\mathcal{O})$, at $x=0$; see definition \eqref{eq:1.30} of $\bar{v}^k$.
This contradiction will prove the lemma.

\emph{Step 1:} Construction of the initial condition $u_0$ in claim \eqref{eq:4.6a}.

As in the proof of claim \eqref{eq:2.5} above, let $P^k$ denote the eigenprojection onto the tangent space $T_\mathcal{O} W^k = \mathrm{span}\, \{\varphi_0,\ldots,\varphi_{k-1}\}$ at $\mathcal{O}$ of the fast unstable manifold $W^k(\mathcal{O})$.
Based on the Schoenflies arguments in \cite{firo13}, as before, $P^k$ projects the $k$-dimensional closed hemisphere $\mathrm{clos} \, \Sigma^k_+(\mathcal{O})$ homeomorphically into the tangent space $T_\mathcal{O}W^k(\mathcal{O})$.
Since $\bar{v}^k$ is in the interior of the open hemisphere $\Sigma_+^k$\,, we may fix $\varepsilon>0$ small enough and define $u_0\in \Sigma_+^k \setminus \{\bar{v}^k\}$, arbitrarily close to $\bar{v}^k$, by $P^k(u_0-\bar{v}^k) := \varepsilon \varphi_{k-1}$\,, i.e.
	\begin{equation}
	\label{eq:4.6c}
	u_0-\bar{v}^k = \varphi := \varepsilon \varphi_{k-1} + a_k\varphi_k + \ldots \ .
	\end{equation}
We have used local bijectivity of the homeomorphic projection $P^k$ here.
We also fixed the sign of $\varphi_{k-1}$ such that $z(\varphi_{k-1})=(k-1)_+$\,.

%Let $\varphi$ denote the right hand side of our definition \eqref{eq:4.6c} of $u_0$\,.
Applying \eqref{eq:3.5a} with $u_1=u_0$ and $u_2=\bar{v}^k$, we obtain $z(\varphi)\leq k-1$.
Sturm's observation \eqref{eq:1.5a}, on the other hand, extends to show $z(\varphi)\geq k-1$ in \eqref{eq:4.6c}.
Therefore $z(\varphi)=k-1$.
Upon closer inspection, the same arguments imply $z(\psi(t,\cdot))\equiv k-1$, for the linearized parabolic solution $\psi(t,\cdot)$ in \eqref{eq:SLpar} and all $t\geq0$.
Invoking the limit $t\rightarrow +\infty$, where the positive coefficient of $\varphi_{k-1}$ in $\psi(t,\cdot)$ dominates, this proves the signed zero number
	\begin{equation}
	\label{eq:4.6d}
	z(\psi(t,\cdot)) \equiv z(\varphi_{k-1}) = (k-1)_+\,,\quad\textrm{for all }t\geq0\,.
	\end{equation}
Indeed the constant zero number $z(\psi(t,\cdot))\equiv k-1$ cannot drop at $x=0$, ever.
Inserting $t=0$ in \eqref{eq:4.6d}, and recalling the construction \eqref{eq:4.6c} of $\psi(0,\cdot)=\varphi=u_0-\bar{v}^k$ proves claim \eqref{eq:4.6a}.

\emph{Step 2}: Proof of claim \eqref{eq:4.6b}.

By construction, $\tilde{v} = \lim u(t,\cdot)$ for $t\rightarrow -\infty$.
Then \eqref{eq:3.5a}, with $u_1=u(t,\cdot)$ and $u_2=\bar{v}^k$, implies $z(u(t,\cdot)-\bar{v}^k)\leq k-1$, for all $t\leq 0$.
Zero number dropping \eqref{eq:1.4}, on the other hand, now extends our previous claim \eqref{eq:4.6a} to  $z(u(t,\cdot)-\bar{v}^k)\geq k-1$, for all $t\leq 0$.
Therefore $z(u(t,\cdot)-\bar{v}^k)\equiv k-1$, for all $t\leq 0$.
Since the zero number never drops, \eqref{eq:4.6a} implies the signed zero number
	\begin{equation}
	\label{eq:4.6d1}
	z(u(t,\cdot)-\bar{v}^k) \equiv z(u_0-\bar{v}^k) = (k-1)_+\,,\quad\textrm{for all }t\leq0\,.
	\end{equation}
Passing to the limit $t\rightarrow-\infty$ in \eqref{eq:4.6d1} shows $z(\tilde{v}-\bar{v}^k) = (k-1)_+$\,, and in particular $\tilde{v} >_0 \bar{v}^k$, as claimed in \eqref{eq:4.6b}.

It remains to show $\tilde{v} \not\in \partial \Sigma_+^k(\mathcal{O})= \Sigma^{k-1}(\mathcal{O})$.
Our construction of $\tilde{v}$ as the $\alpha$-limit set of $u(t,\cdot) \in W^u(\tilde{v})$, and \eqref{eq:2.14}, \eqref{eq:3.5a} imply
	\begin{equation}
	\label{eq:4.6e}
	i(\tilde{v})>z(u_0-\tilde{v})=z(\bar{v}^k-\tilde{v})=k-1=\dim\,\Sigma^{k-1}(\mathcal{O}).
	\end{equation}
In the first equality here, we used that $u_0$ can be chosen arbitrarily close to $\bar{v}^k$, for small $\varepsilon>0$.
This contradiction to $\tilde{v} \in W^u(\tilde{v}) \subset \Sigma^{k-1}(\mathcal{O})$ completes the proof of claim \eqref{eq:4.6b}, and of the lemma.
\end{proof}

We consider the $+$descendants $w^j\in \Sigma^j_+(\bar{v}^k)$ of $\bar{v}^k$ next, for $0\leq j< k$:
	\begin{equation}
	\bar{v}^k \leadsto w^{k-1} \leadsto \ldots \leadsto \mathcal{O}.
	\label{eq:4.7}
	\end{equation}
By construction, $z(w^j- \bar{v}^k)= j_+$\,.
However, this does not yet determine $z(w^{k-1}-\mathcal{O})$ to be $k-1$, as claimed in \eqref{eq:4.5}.

\begin{lem}\label{lem:4.2}
For any $1\leq k<n= i(\mathcal{O})$, the first $+$descendant $w^{k-1}$ of $\bar{v}^k$ satisfies claim \eqref{eq:4.5}.
In particular
	\begin{equation}
	w^{k-1}\in\Sigma^{k-1}_+(\mathcal{O}).
	\label{eq:4.8a}
	\end{equation}
\end{lem}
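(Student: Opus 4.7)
The plan is to locate $w^{k-1}$ topologically inside the closed hemisphere $\mathrm{clos}\,\Sigma^k_+(\mathcal{O})$ and then rule out the ``interior'' alternative by exploiting the extremal definition of $\bar{v}^k$. As a starting observation, since $\mathcal{O}\leadsto\bar{v}^k$ (from $\bar{v}^k\in\mathcal{E}^k_+(\mathcal{O})$) and $\bar{v}^k\leadsto w^{k-1}$ (the top rung of the descending $+$staircase of $\bar{v}^k$), transitivity of $\leadsto$ yields $\mathcal{O}\leadsto w^{k-1}$. Hence $w^{k-1}$ already lies in \emph{some} $\mathcal{E}^{j}_\pm(\mathcal{O})$, and the remaining task reduces to pinning down $j=k-1$ and sign $+$.

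For the main step, I would invoke Lemma \ref{lem:4.1} to get $i(\bar{v}^k)=k\geq1$, so that Lemma \ref{lem:2.2} applies with $\bar{v}^k$ taking the role of $\mathcal{O}$. Its part (ii), applied to the $+$descendants $w^j$ of $\bar{v}^k$, delivers the key inequality $w^{k-1}>_0\bar{v}^k$, which combined with $\bar{v}^k>_0\mathcal{O}$ gives $w^{k-1}>_0\mathcal{O}$. Next I would localize topologically: since $\bar{v}^k\in\mathcal{E}^k_+(\mathcal{O})$ gives $W^u(\bar{v}^k)\subseteq\Sigma^k_+(\mathcal{O})$, and since $w^{k-1}\in\partial W^u(\bar{v}^k)$ by \eqref{eq:1.5b} applied to the staircase step $\bar{v}^k\leadsto w^{k-1}$, we obtain
	\begin{equation*}
	w^{k-1}\in\mathrm{clos}\,W^u(\bar{v}^k)\subseteq\mathrm{clos}\,\Sigma^k_+(\mathcal{O})=\Sigma^k_+(\mathcal{O})\,\dot{\cup}\,\Sigma^{k-1}(\mathcal{O}).
	\end{equation*}
A two-case dichotomy then closes the argument. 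If $w^{k-1}\in\Sigma^k_+(\mathcal{O})$, then by disjointness of the Thom-Smale cells $W^u(v)$, $v\in\mathcal{E}^k_+(\mathcal{O})$ — each containing only its own barycenter as an equilibrium — we would have $w^{k-1}\in\mathcal{E}^k_+(\mathcal{O})$, but the maximality defining $\bar{v}^k$ in \eqref{eq:1.30} then forces $w^{k-1}(0)\leq\bar{v}^k(0)$, contradicting $w^{k-1}>_0\bar{v}^k$. Hence $w^{k-1}\in\Sigma^{k-1}(\mathcal{O})$, i.e.~$z(w^{k-1}-\mathcal{O})=k-1$; and the sign is $+$ by $w^{k-1}>_0\mathcal{O}$, proving \eqref{eq:4.5} and \eqref{eq:4.8a} together.

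The one point that requires a little care is the topological identification $\mathrm{clos}\,\Sigma^k_+(\mathcal{O})=\Sigma^k_+(\mathcal{O})\,\dot{\cup}\,\Sigma^{k-1}(\mathcal{O})$, which relies on the Schoenflies input of \cite{firo13} already invoked in Section \ref{sec2}, together with the observation that the open hemisphere $\Sigma^k_+(\mathcal{O})$ contains no equilibria beyond the barycenters in $\mathcal{E}^k_+(\mathcal{O})$. Given these ingredients, the contradiction in the first case is immediate from the extremal definition of $\bar{v}^k$, while the sign assertion \eqref{eq:4.8a} follows automatically from $w^{k-1}>_0\mathcal{O}$. The heteroclinic transitivity and the reuse of Lemma \ref{lem:2.2} with $\bar{v}^k$ in place of $\mathcal{O}$ should pose no difficulty, since both are licensed by $i(\bar{v}^k)=k\geq1$.
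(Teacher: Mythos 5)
Your overall strategy is sound and runs parallel to the paper's: the paper also establishes $\mathcal{O}\leadsto w^{k-1}$ by transitivity, obtains $\mathcal{O}<_0\bar{v}^k<_0 w^{k-1}$, and excludes the possibility $w^{k-1}\in\mathcal{E}^k_+(\mathcal{O})$ by the maximality of $\bar{v}^k$ at $x=0$. Your topological localization $w^{k-1}\in\mathrm{clos}\,W^u(\bar{v}^k)\subseteq\mathrm{clos}\,\Sigma^k_+(\mathcal{O})=\Sigma^k_+(\mathcal{O})\,\dot\cup\,\Sigma^{k-1}(\mathcal{O})$ is a legitimate substitute for the paper's zero-number estimate $z(w^{k-1}-\mathcal{O})\leq z(\bar{v}^k-\mathcal{O})=k$ along the heteroclinic orbit $\bar{v}^k\leadsto w^{k-1}$, and your dichotomy correctly rules out the first alternative.

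There is, however, a genuine gap at the very last step. From $w^{k-1}\in\Sigma^{k-1}(\mathcal{O})$ you conclude ``i.e.\ $z(w^{k-1}-\mathcal{O})=k-1$''. That inference is false as stated: by \eqref{eq:1.11}, $\Sigma^{k-1}(\mathcal{O})=\dot\bigcup_{0\leq j\leq k-1}\Sigma^j_\pm(\mathcal{O})$ is the \emph{entire} boundary sphere of $W^k(\mathcal{O})$, containing all lower hemispheres down to the poles, so membership there only yields $z(w^{k-1}-\mathcal{O})\leq k-1$. You need the matching lower bound, and you never produce it. The paper closes exactly this gap by noting $i(w^{k-1})=k-1$ from the descendant construction \eqref{eq:2.7} and applying \eqref{eq:2.15} to the heteroclinic orbit $\mathcal{O}\leadsto w^{k-1}$ (which you did establish), giving $z(w^{k-1}-\mathcal{O})\geq i(w^{k-1})=k-1$. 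Alternatively, within your topological framing, you could argue that $w^{k-1}$ is the barycenter of a $(k-1)$-dimensional cell $W^u(w^{k-1})$, which can only fit inside a hemisphere $\Sigma^j_\pm(\mathcal{O})$ of dimension $j\geq k-1$, again forcing $j=k-1$. Either repair is short, but as written the equality $z(w^{k-1}-\mathcal{O})=k-1$ is asserted rather than proved; note also that you nowhere record the Morse index $i(w^{k-1})=k-1$, which is the essential input for both repairs.
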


\begin{proof}[\textbf{Proof:}]
By construction of the $+$descendant $w^{k-1}$ of $\bar{v}^k$, we have
	\begin{equation}
	i(w^{k-1})=k-1;
	\label{eq:4.8b}
	\end{equation}
see \eqref{eq:2.7}.
Since $\Sigma^k_+(\mathcal{O})\ni \bar{v}^k \leadsto w^{k-1},$ by \eqref{eq:4.7}, we therefore obtain 
	\begin{equation}
	k=z(\bar{v}^k-\mathcal{O})\geq z(w^{k-1}- \mathcal{O}).
	\label{eq:4.9}
	\end{equation}

On the other hand, $\mathcal{O}\leadsto \bar{v}^k \in \Sigma^k_+ (\mathcal{O}), \ \bar{v}^k \leadsto w^{k-1}$, and Morse-Smale transitivity of $\leadsto$, imply $\mathcal{O} \leadsto w^{k-1}$.
Therefore \eqref{eq:2.15}, for $v:=w^{k-1}$ and \eqref{eq:4.8b} yield
	\begin{equation}
	k-1=i(w^{k-1})\leq z(\mathcal{O}- w^{k-1})= z(w^{k-1}- \mathcal{O}).
	\label{eq:4.10}
	\end{equation}
Together,\eqref{eq:4.9} and \eqref{eq:4.10} leave us with the options $z(w^{k-1}- \mathcal{O})\in \{k-1,k\}$.

Suppose, indirectly, that the bad option $z(w^{k-1}-\mathcal{O})= k$ holds true.
Then $\mathcal{O}<_0 \bar{v}^k< _0w^{k-1}$, by $\bar{v}^k\in \Sigma^k_+(\mathcal{O})$ and $w^{k-1}\in \Sigma^{k-1}_+(\bar{v}^k)$, implies $w^{k-1}\in  \Sigma^k_+(\mathcal{O}). $ 
This contradicts the maximality of $\bar{v}^k$ in $\Sigma^k_+(\mathcal{O})$, at $x=0$, and proves $z(w^{k-1}- \mathcal{O})=k-1$, as claimed in \eqref{eq:4.5}.

Since $\mathcal{O}<_0 \bar{v}^k< _0w^{k-1}$ still holds, we also obtain $z(w^{k-1}-\mathcal{O})= (k-1)_+$\,.
Moreover we recall $\mathcal{O}\leadsto w^{k-1}$.
Together this establishes $w^{k-1}\in\Sigma^{k-1}_+(\mathcal{O})$, as claimed in \eqref{eq:4.8a}, and the lemma is proved.
\end{proof}

\begin{thm} \label{thm:4.3}
In the setting and notation of \eqref{eq:1.29}, \eqref{eq:1.30},
	\begin{equation}
	\bar{v}^k=\underline{v}^k
	\label{eq:4.11}
	\end{equation}
holds for all $0\leq k< n = i(\mathcal{O})$.
\end{thm}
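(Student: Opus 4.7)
The plan is to induct on $k$ and reduce the claim $\bar{v}^k=\underline{v}^k$ to $\bar{v}^k=v^k$ via Theorem~\ref{thm:3.1}, which gives $\underline{v}^k=v^k$. The base case $k=0$ is immediate, as $\Sigma^0_+(\mathcal{O})$ contains only the south pole. For the inductive step I assume $\bar{v}^{k'}=v^{k'}$ for all $0\le k'<k$.

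First I would assemble the $+$descendant staircase of $\bar{v}^k$: by Lemma~\ref{lem:4.1}, $i(\bar{v}^k)=k$, so the full staircase $\bar{v}^k\leadsto w^{k-1}\leadsto\cdots\leadsto w^0$ exists with $i(w^j)=j$, and by Lemma~\ref{lem:4.2} its first rung $w^{k-1}$ lies in $\Sigma^{k-1}_+(\mathcal{O})$. The Schoenflies uniqueness argument recalled around \eqref{eq:2.6} then characterizes $\bar{v}^k$ as the unique element of $\mathcal{E}^k_+(\mathcal{O})$ whose $k$-cell contains the equatorial $(k-1)$-cell $c_{w^{k-1}}$ in its boundary, and symmetrically characterizes $v^k$ from $v^{k-1}$. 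Consequently $\bar{v}^k=v^k$ follows once I identify $w^{k-1}=v^{k-1}$, and by the inductive hypothesis this is equivalent to the cleaner statement $w^{k-1}=\bar{v}^{k-1}$.

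To propagate the identification along the entire staircase I would prove by downward induction on $j$ that $w^j\in\Sigma^j_+(\mathcal{O})$ for every $0\le j\le k-1$. Positivity $w^j>_0\bar{v}^k>_0\mathcal{O}$ from Lemma~\ref{lem:2.2}(ii) applied to $\bar{v}^k$ pins the hemisphere sign to $+$, while zero-number nonincrease along $w^{j+1}\leadsto w^j$ combined with \eqref{eq:2.15} bounds $z(w^j-\mathcal{O})\in\{j,j+1\}$. Thus only the ``bad alternative'' $z(w^j-\mathcal{O})=j+1$ needs to be excluded. Once each $w^j\in\Sigma^j_+(\mathcal{O})$ is secured, the staircase together with the signed zero-number data matches Definition~\ref{def:1.1} verbatim, and uniqueness of the $+$descendants of $\mathcal{O}$ forces $w^j=v^j$ for every $j$; in particular $w^{k-1}=v^{k-1}$, closing the induction.

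The main obstacle I foresee is excluding the bad alternative $z(w^j-\mathcal{O})=j+1$ at intermediate levels $j<k-1$, where the direct maximality contradiction of Lemma~\ref{lem:4.2} (against $\bar{v}^k$) is no longer immediately available. My intended remedy is to exploit the outer induction hypothesis, which pins $\bar{v}^{j+1}=v^{j+1}$ and so rigidifies the cellular position any hypothetical offender $w^j\in\mathcal{E}^{j+1}_+(\mathcal{O})$ could occupy. Combining this with the strict $x=0$ ordering of $+$descendants of $\bar{v}^k$ from Lemma~\ref{lem:2.2}(i),(ii) and the two-$k$-cell-neighbor adjacency of the Schoenflies picture in the interior of $\Sigma^{j+1}_+(\mathcal{O})$, I expect to construct an equilibrium in $\mathcal{E}^k_+(\mathcal{O})$ strictly above $\bar{v}^k$ at $x=0$---in the spirit of the backward-trajectory argument of Lemma~\ref{lem:4.1}---thereby contradicting the maximality of $\bar{v}^k$.
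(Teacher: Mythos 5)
Your reduction scaffolding is fine as far as it goes: by lemma \ref{lem:4.1} the cell $c_{\bar{v}^k}$ is a $k$-cell, by lemma \ref{lem:4.2} its first $+$descendant $w^{k-1}$ lies in $\Sigma^{k-1}_+(\mathcal{O})$, and the Schoenflies uniqueness around \eqref{eq:2.6} together with definition \ref{def:1.1} would indeed give $\bar{v}^k=v^k$ (hence, via theorem \ref{thm:3.1}, the claim) \emph{once} you know $w^{k-1}=v^{k-1}$. But the step on which everything hinges --- excluding the ``bad alternative'' $z(w^j-\mathcal{O})=j+1$ at the intermediate levels $j<k-1$, so that all $w^j\in\Sigma^j_+(\mathcal{O})$ and the staircase of $\bar{v}^k$ can be matched with the descendants of $\mathcal{O}$ --- is exactly the point you leave open, and your proposed remedy does not work as described. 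If $z(w^j-\mathcal{O})=j+1$, the offender $w^j$ lies in $\mathcal{E}^{j+1}_+(\mathcal{O})$, a \emph{lower}-dimensional hemisphere; the maximality defining $\bar{v}^k$ only concerns $\mathcal{E}^{k}_+(\mathcal{O})$, and no mechanism is offered to climb from $\Sigma^{j+1}_+(\mathcal{O})$ back up to an element of $\mathcal{E}^k_+(\mathcal{O})$ above $\bar{v}^k$ at $x=0$ (the backward-flow trick of lemma \ref{lem:4.1} works because there $u_0$ is placed in $\Sigma^k_+$ next to $\bar{v}^k$ itself, not next to a point of a smaller hemisphere). Nor does the outer induction hypothesis $\bar{v}^{j+1}=v^{j+1}$ save you: from $w^j>_0\bar{v}^k$ you cannot conclude $w^j>_0\bar{v}^{j+1}$, since the relative $x=0$ position of $\bar{v}^k$ and $v^{j+1}$ is unknown at this stage (indeed, in the true picture $v^{j+1}>_0 v^k$ by lemma \ref{lem:2.2}(i), so the desired inequality does not follow and no contradiction with maximality of $\bar{v}^{j+1}$ arises from these orderings alone). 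So the central difficulty is acknowledged but not resolved; as written, the proof has a genuine gap.

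For comparison, the paper's proof avoids ever locating the intermediate $w^j$ in hemispheres of $\mathcal{O}$: it argues indirectly with $\underline{v}^k\neq\bar{v}^k$ and measures zero numbers against $\underline{v}^k$ instead. Lemma \ref{lem:2.2}(iii),(iv) applied to the staircase of $\bar{v}^k$, plus lemma \ref{lem:4.2} at the top level only, show that consecutive $w^j$ straddle $\underline{v}^k$ at $x=1$, so $\zeta_j:=z(w^j-\underline{v}^k)$ is strictly increasing; the hemisphere bound \eqref{eq:3.5a} gives $z(\bar{v}^k-\underline{v}^k)\le k-1$, and dropping along $\bar{v}^k\leadsto w^{k-1}$ gives $\zeta_{k-1}<z(\bar{v}^k-\underline{v}^k)$; the pigeonhole proposition \ref{prop:2.1} then forces $\zeta_{k-1}=k-1$, and the three statements \eqref{eq:4.12}--\eqref{eq:4.14} contradict each other. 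If you want to rescue your cell-matching strategy, you would need an argument of comparable strength to rule out the intermediate bad alternative; the parity/pigeonhole mechanism relative to $\underline{v}^k$ is the tool the paper uses precisely because the hemisphere location of the intermediate $w^j$ relative to $\mathcal{O}$ is not accessible directly.
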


\begin{proof}[\textbf{Proof:}]	
For $k=0$, where $\Sigma^0_+(\mathcal{O})= \{v^0\}$ consists of a single equilibrium anyway, there is nothing to prove.
Therefore consider $1\leq k < n$.
We proceed indirectly and suppose $\bar{v}^k\neq \underline{v}^k$. 
Let $w^j\in \Sigma^j_+(\bar{v}^k)$ denote the $+$descendants of $\bar{v}^k$, as in \eqref{eq:4.7} and in lemma \ref{lem:4.2}.
To reach a contradiction we prove the following three contradictory claims, separately:
	\begin{eqnarray}
	z(w^{k-1}-\underline{v}^k) &=& k-1\,; \label{eq:4.12}\\
	z(w^{k-1}-\underline{v}^k) &<& z(\bar{v}^k- \underline{v}^k)\,; \label{eq:4.13}\\
	z(\bar{v}^k-\underline{v}^k) &\leq& k-1\,. \label{eq:4.14}
	\end{eqnarray}

Among all equilibria in $\mathcal{E}^k_+(\mathcal{O})\subseteq \Sigma^k_+(\mathcal{O})$, we recall  $\underline{v}^k$ is closest to $\mathcal{O}$ at $x=1$, and $\bar{v}^k\in \mathcal{E}^k_+(\mathcal{O})\subseteq \Sigma^k_+(\mathcal{O})$ is most distant at $x=0$; see definitions \eqref{eq:1.29} and \eqref{eq:1.30}.
In particular, $\underline{v}^k \in \mathcal{E}^k_+(\mathcal{O})$ is strictly between $\mathcal{O}$ and $\bar{v}^k$, both, at $x=0$ and $x=1$, by our indirect assumption $\underline{v}^k \neq \bar{v}^k$.
From \eqref{eq:3.5a} and $\underline{v}^k, \bar{v}^k\in \Sigma^k_+(\mathcal{O})$ we also recall $z(\bar{v}^k-\underline{v}^k)\leq k-1$, as claimed in \eqref{eq:4.14}.

Consider the $+$descendants $w^j$ of $\bar{v}^k$, with $j$ of the same even/odd parity as $k$.
Then lemma \ref{lem:2.2}(iii) and (iv) imply
	\begin{eqnarray}
	\bar{v}^k<_1 w^j <_1 \ldots <_1 w^2 <_1 w^0, &\quad& \mathrm{for\ even}~ j<k, \label{eq:4.15}\\
	\bar{v}^k>_1 w^j >_1 \ldots >_1 w^3 >_1w^1, &\quad& \mathrm{for\ odd}~ j<k, \label{eq:4.16}
	\end{eqnarray}
at $x=1$.
Because $\underline{v}^k$ is closest to $\mathcal{O}$ in $\mathcal{E}^k_+(\mathcal{O})\subseteq \Sigma^k_+(\mathcal{O})$, at $x=1$, it lies strictly between $\mathcal{O}$ and $\bar{v}^k\in \mathcal{E}^k_+(\mathcal{O})\subseteq \Sigma^k_+(\mathcal{O}) $ there.
In particular $\underline{v}^k$ fits into \eqref{eq:4.15} and \eqref{eq:4.16} as follows:
	\begin{eqnarray}
	\mathcal{O}<_1\underline{v}^k<_1 \bar{v}^k <_1 w^j <_1 \ldots <_1 w^2 <_1 w^0, &\quad& \mathrm{for\ even}~ j<k, \label{eq:4.17}\\
	\mathcal{O}>_1\underline{v}^k>_1 \bar{v}^k >_1 w^j >_1 \ldots >_1 w^3 >_1 w^1, &\quad& \mathrm{for\ odd}~ j<k. \label{eq:4.18}
	\end{eqnarray}
Moreover $\bar{v}^k\in \Sigma^k_+(\mathcal{O})$, and, by lemma \ref{lem:4.2}, $w^{k-1}\in\Sigma^{k-1}_+(\mathcal{O})$ lie on opposite sides of $\mathcal{O}$, at $x=1$, by opposite parities of $k$ and $k-1$.
Hence $\underline{v}^k$ lies strictly between $\bar{v}^k$ and $w^{k-1}$, at $x=1$.
Therefore zero number dropping of $z(u(t,\cdot) - \underline{v}^k)$ along the heteroclinic orbit $u(t,\cdot),\ \bar{v}^k\leadsto w^{k-1}$, implies $z(\bar{v}^k- \underline{v}^k)> z(w^{k-1}- \underline{v}^k)$, as claimed in \eqref{eq:4.13}.

To prove claim \eqref{eq:4.12}, finally, let $\zeta_j:= z(w^j- \underline{v}^k)\geq0$.
We apply the pigeon hole proposition \ref{eq:2.1}.
First, we note $\zeta_{j-1}< \zeta_j$\,, for all $j=1, \ldots, k-1,$ because $w^j\leadsto w^{j-1}$ and $w^{j-1}, w^j$ are on opposite sides of $\underline{v}^k$, by \eqref{eq:4.17}, \eqref{eq:4.18}.
Since \eqref{eq:4.13} and \eqref{eq:4.14} imply $\zeta_{k-1}\leq k-1$, claim \eqref{eq:4.12} follows from proposition \ref{eq:2.1} which asserts $\zeta_j=j$ for all $0\leq j \leq k-1$.
This proves the theorem by the contradictions \eqref{eq:4.12} -- \eqref{eq:4.14}.
\end{proof}

So far, we have only considered descendants $\underline{v}^k=v^k=\bar{v}^k$ based on the constant sign sequence $\mathbf{s}=++\ldots$ of \eqref{eq:1.26}.  
The four trivial equivalences \eqref{eq:1.27} provide the following extension of minimax theorem \ref{thm:4.3}.

\begin{cor}\label{cor:4.4}
Let $0\leq k< n = i(\mathcal{O}),\ \iota=0,1$, and consider all equilibria in the open hemisphere $\Sigma_+^k(\mathcal{O})$.
Then the equilibrium closest to $\mathcal{O}$, at $x=\iota$, coincides with the equilibrium most distant from $\mathcal{O}$, at the opposite boundary $x=1-\iota$.
The same statement holds true among the equilibria in the opposite open hemisphere $\Sigma_-^k(\mathcal{O})$.

In other words, the $h_\iota$-closest equilibrium to $\mathcal{O}$ is $h_{1-\iota}$-most distant, in the same hemisphere $\Sigma_\pm^k(\mathcal{O})$.

\end{cor}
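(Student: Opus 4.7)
The plan is to deduce the corollary from theorem \ref{thm:4.3} by invoking the trivial equivalences \eqref{eq:1.27}, which generate a Klein 4-group $\mathbb{Z}_2\times\mathbb{Z}_2$ acting on the class of Sturm attractors. Theorem \ref{thm:4.3} already establishes one of the four possible configurations: in the open hemisphere $\Sigma_+^k(\mathcal{O})$, the $\mathcal{O}$-closest equilibrium at $x=1$ coincides with the $\mathcal{O}$-most distant equilibrium at $x=0$. It remains to transfer this to the three other cases $(\Sigma_+^k,\iota=0)$, $(\Sigma_-^k,\iota=1)$ and $(\Sigma_-^k,\iota=0)$.

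First I would record how each generator acts on the relevant data. Under $u\mapsto -u$ the two Neumann boundaries $x=\iota$ are fixed and the absolute distances $|v(\iota)-\mathcal{O}(\iota)|$ are preserved, while the signed zero number $z(v-\mathcal{O})=k_\pm$ flips sign; hence the two hemispheres $\Sigma_\pm^k(\mathcal{O})$ get swapped whereas the notions of $\mathcal{O}$-closest and $\mathcal{O}$-most distant at each $\iota$ are unchanged. Under $x\mapsto 1-x$ the boundaries $x=0$ and $x=1$ are interchanged, so closest and most distant are swapped between the two boundaries. The hemispheres $\Sigma_\pm^k(\mathcal{O})$ themselves are preserved for even $k$ and swapped for odd $k$, because any profile $v-\mathcal{O}$ with $k$ strict zeros and sign $+$ at $x=0$ ends with sign $(-1)^k$ at $x=1$.

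With this dictionary at hand, the Klein 4-group acts transitively on the four configurations $\bigl(\Sigma_\pm^k(\mathcal{O}),\ \iota\in\{0,1\}\bigr)$, so each of the remaining three cases is obtained by applying a single group element to theorem \ref{thm:4.3}, the element being chosen according to the parity of $k$. For instance, applying $u\mapsto -u$ directly yields case $(\Sigma_-^k,\iota=1)$; applying $x\mapsto 1-x$ yields case $(\Sigma_+^k,\iota=0)$ for even $k$ and $(\Sigma_-^k,\iota=0)$ for odd $k$, and composing both generators covers the last subcase. Since the entire Sturm structure, together with its signed Thom--Smale refinement, is invariant under every trivial equivalence, the translated statements are precisely the remaining assertions of the corollary. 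I expect no substantive obstacle beyond this parity-sensitive bookkeeping of hemisphere signs, which must be carried out once and for all in order to land in the correct hemisphere/boundary pair for each subcase.
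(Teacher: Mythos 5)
Your proposal is correct and is essentially the paper's own argument: the paper derives corollary \ref{cor:4.4} from theorem \ref{thm:4.3} precisely by invoking the four trivial equivalences \eqref{eq:1.27}, and your parity bookkeeping (hemispheres swapped by $u\mapsto -u$; boundaries swapped and hemispheres swapped exactly for odd $k$ by $x\mapsto 1-x$) correctly fills in the details the paper leaves implicit.
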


%%%%%%%%%%%%%%%%%%%%%%%%%%%%%%%%%%%%%%%%%%%%%%%%%%%%%%%

\section{Discussion}\label{sec5}

In this final section we explore and illustrate what our main theorem \ref{thm:1.2} does, and does not, say.
We first review the most celebrated Sturm global attractor, the $n$-dimensional Chafee-Infante attractor $\mathcal{CI}_n$ \cite{chin74}.
Contrary to the common approach, which starts from a symmetric cubic nonlinearity, an ODE discussion of equilibria, and the time map of their pendulum boundary value problem, we start from an abstract description of $\mathcal{CI}_n$ as the $n$-dimensional Sturm attractor with the minimal number $N=2n+1$ of equilibria. 
We derive the associated signed Thom-Smale complex next, in this much more general context.
Invoking lemma \ref{lem:2.2} will then provide the well-known shooting meander, and the Sturm permutation, of the Chafee-Infante attractor $\mathcal{CI}_n$\,.

In the second part of our discussion, we present examples of abstract signed regular complexes which are 3-balls.
We first adapt the general recipe of theorem \ref{thm:1.2} for the construction of the associated boundary orders $h_0, h_1$ to the special case of Sturmian 3-balls, in the spirit of \cite{firo3d-1, firo3d-2}. See theorem \ref{thm:5.2} and definitions \ref{def:5.1}, \ref{def:5.3}, \ref{def:5.4}.
Going beyond the introductory example of fig.~\ref{fig:1.0}, we address three further examples for illustration.

Our first new example, in fig.~\ref{fig:5.2}, constructs $h_0, h_1$ and the permutation $\sigma = h_0^{-1}\circ h_1$ for the unique Sturm solid tetrahedron with two faces in each hemisphere.
The locations of the poles $\mathbf{N}, \mathbf{S}$ along the 4-edge meridian circle turn out to be edge-adjacent, necessarily.

The second example, fig.~\ref{fig:5.3}, deviates from the above tetrahedral signed Thom-Smale complex, by misplacing the South pole and reversing the orientation of a single edge, along the same meridian circle as before. 
The new location of the South pole is not edge-adjacent to the North pole, along the meridian, but diametrically opposite, instead.
Still, our recipe succeeds to construct a Sturm permutation $\sigma:=h_0^{-1}\circ h_1$\,.
The associated Sturm global attractor, however, fails to be the regular solid tetrahedron.

The third example, fig.~\ref{fig:5.4}, starts from a signed regular solid octahedron complex with antipodal pole locations. 
It was first observed in \cite{firo14} that such antipodal octahedra cannot be of Sturm type.
Nevertheless, our construction of the total orders $h_0$ and $h_1$ still succeeds, at first sight.
The permutation $\sigma:=h_0^{-1}\circ h_1$\,, however, fails to define a meander.
We conclude with comments on the still elusive goal of a geometric characterization of \emph{all} Sturmian Thom-Smale complexes $\mathcal{C}^s_f$ as abstract signed regular cell complexes $\mathcal{C}^s$.

The $n$-dimensional \emph{Chafee-Infante attractor} $\mathcal{CI}_n$ is usually presented as the Sturm global attractor of 
	\begin{equation}
	u_t = u_{xx} + \lambda^2 u(1-u^2)
	\label{eq:CI.1}
	\end{equation}
on the unit interval $0<x<1$, with parameter $0<(n-1)\pi < \lambda < n\pi$, cubic nonlinearity, and for Neumann boundary conditions. 
See \cite{chin74} for the closely related original Dirichlet setting. 
Geometrically, $\mathcal{CI}_0$ can be thought of as the single trivial equilibrium $\mathcal{O}\equiv0$, and $\mathcal{CI}_n$ is the one-dimensionally unstable double cone suspension of $\mathcal{CI}_{n-1}$\,, inductively for $n>0$. 
See \cite{he85, fi94}. 
The double cone suspension is a generalization of the passage to a sphere $\Sigma^n$ from its equator $\Sigma^{n-1}$\,, of course.

The Chafee-Infante attractor $\mathcal{CI}_n$ can also be characterized as the Sturm attractor of maximal dimension $n=(N-1)/2$, for any (necessarily odd) number $N$ of equilibria.
Equivalently, $\mathcal{CI}_n$ is the $n$-dimensional Sturm attractor with the minimal number $N=2n+1$ of equilibria. 
See \cite{fi94}.

Let us start from that latter characterization, abstractly, without any ODE or PDE recurrence to the explicit equation \eqref{eq:CI.1} whatsoever.
We will first derive the relevant part of the signed Thom-Smale complex $\mathcal{C}^s$ for the $n$-dimensional Chafee-Infante attractor $\mathcal{CI}_n$ with $N=2n+1$ equilibria. 
We will then determine the two boundary orders $h_\iota$, and arrive at the meander permutation $\sigma=h_0^{-1}\circ h_1$ of the Chafee-Infante attractor  $\mathcal{CI}_n$, without any ODE  shooting analysis of the equilibrium boundary value problem \eqref{eq:1.2}.

Fix any dimension $n\geq 1$.
By assumption, $\mathcal{C}^s$ contains at least one cell $c_\mathcal{O}$ of dimension $n$.
The relevant part of the signed Thom-Smale complex $\mathcal{C}^s$ is the signed hemisphere decomposition $\Sigma_\pm^k=\Sigma_\pm^k(\mathcal{O}),\ 0\leq k<n,$ of $\Sigma^{n-1}=\partial W^u(\mathcal{O})$; see \eqref{eq:1.8}--\eqref{eq:1.11}.
Each of the $2n$ open hemispheres $\Sigma_\pm^k$ must contain at least one equilibrium $v_\pm^k\neq\mathcal{O}$.
For the minimal number $N-1=2n$ of remaining equilibria, this proves 
	\begin{equation}
	\label{eq:CI.2}
	\Sigma_\pm^k(\mathcal{O}) = W^u(v_\pm^k)\,.
	\end{equation}
In particular all minimal or maximal equilibria $\underline{v}^k,\ \bar{v}^k$ defined in \eqref{eq:1.29}, \eqref{eq:1.30} coincide, trivially, in each single-element equilibrium set $\mathcal{E}_\pm^k = \{v_\pm^k\}$ of the hemisphere decomposition \eqref{eq:CI.2}.

\begin{figure}[t!]
\centering \includegraphics[width=1.0\textwidth]{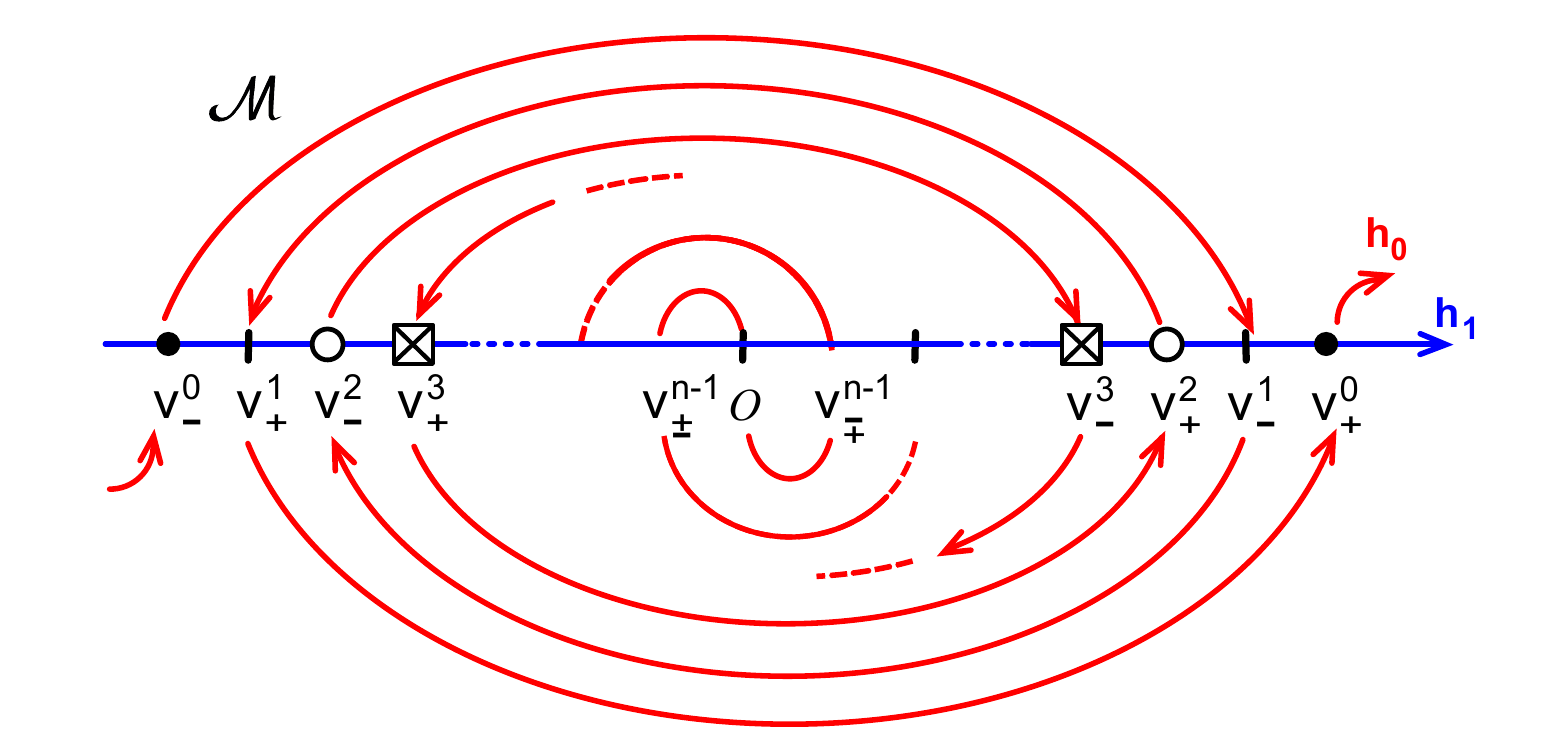}
\caption{\emph{
The meander $h_0$ (red) of the $n$-dimensional Chafee-Infante attractor \eqref{eq:CI.1}.
Note the two collections of nested arcs, one above and one below the horizontal $h_1$ axis (blue). 
The outermost arcs begin and terminate at the poles $v_\pm^0$\,, respectively, and the innermost arcs involve $\mathcal{O}$. 
Because the two nests are shifted with respect to each other, their arcs join to form a meander curve, in fact a double spiral, with an inflection at the central equilibrium $\mathcal{O}$.
The neighbors $v_\pm^{n-1}$ of $\mathcal{O}$ are $v_+^{n-1}$ to the left and $v_-^{n-1}$ to the right, if $n-1$ is odd. 
For even $n-1$ the subscript signs are swapped.
}}
\label{fig:CI.1}
\end{figure}

For the $\mathbf{s}$ descendants $v^k(\mathbf{s}) \in \Sigma_{s_k}^k$\,, defined in \eqref{eq:1.18}, \eqref{eq:1.19a}, the hemisphere characterization \eqref{eq:CI.2} therefore implies
	\begin{equation}
	\label{eq:CI.4}
	v^k(\mathbf{s}) =v_{s_k}^k\,.
	\end{equation}
In particular, theorems \ref{thm:3.1} and \ref{thm:4.3} become trivial.

Invoking lemma \ref{lem:2.2}(ii) for constant $\mathbf{s}=++\ldots$ and $\mathbf{s}=--\ldots$\,, however, implies the total order
	\begin{equation}
	\label{eq:CI.5}
	v_-^0 <_0  v_-^1 <_0 v_-^2 <_0 \ldots <_0 v_-^{n-1} <_0 \mathcal{O} <_0 v_+^{n-1} <_0 \ldots <_0 v_+^2 <_0 v_+^1 <_0 v_+^0 \,.
	\end{equation}
of all equilibria, at the boundary $x=0$.
Here we have applied the trivial equivalence $u\mapsto-u$ of \eqref{eq:1.27}, as in the end of section \ref{sec2}, to also derive the ordering of $v_-^k$ at $x=0$.
This determines $h_0$\,.

To determine the total order $h_1$ of all equilibria at the opposite boundary $x=1$, we invoke the trivial equivalence $x\mapsto 1-x$ of \eqref{eq:1.27}.
Since this equivalence swaps $s_j$ with $(-1)^j s_j$\,, and $h_0$ with $h_1$\,, we can apply \eqref{eq:CI.4} and \eqref{eq:CI.5} to conclude
	\begin{equation}
	\label{eq:CI.6}
	v_-^0 <_1 v_+^1 <_1 v_-^2 <_1 v_+^3 <_1 \ldots <_1 \mathcal{O} <_1  \ldots <_1 v_-^3 <_1 v_+^2 <_1 v_-^1 <_1 v_+^0 \,.
	\end{equation}
Indeed the equilibria $v_+^k$ appear below $\mathcal{O}$ for odd $k$, in increasing order, and above $\mathcal{O}$ for even $k$, in decreasing order. 
For the intercalating equilibria $v_-^k$ the parities of $k$ are swapped. 

See fig.~\ref{fig:CI.1} for the resulting meander $h_0$\,, based on the orders  \eqref{eq:CI.5}, \eqref{eq:CI.6}.
Notably the meander consists of two collections of nested arcs, one above and one below the horizontal $h_1$ axis. 
The two outermost arcs join the two poles $v_\pm^0$ with $v_\pm^1$\,, respectively.
The two innermost arcs join the inflection point $\mathcal{O}$ with $v_\pm^{n-1}$.
In cycle notation, the Chafee-Infante Sturm permutation $\sigma=h_0^{-1}\circ h_1$ of the $N=2n+1$ equilibria follows easily from
	\eqref{eq:CI.5}, \eqref{eq:CI.6} to be the involution
	\begin{equation}
	\label{eq:CI.7}
	\sigma = (2\ \ 2n)\ (4\ \ 2n-2) \ldots (2\lfloor\tfrac{n}{2}\rfloor\ \ 2\lfloor\tfrac{n+3}{2}\rfloor)\,.
	\end{equation}
Here $\lfloor\cdot\rfloor$ denotes the integer floor function.

Consider the four constant and alternating sign sequences $\mathbf{s}$.
The boundary orders \eqref{eq:CI.5} and \eqref{eq:CI.6} then make it easy to determine the four corresponding heteroclinic staircases $v^j(\mathbf{s})=v^j_{s_j}\,,\ 0\leq j < n$, in \eqref{eq:1.19b} descending from $\mathcal{O}$; see also \eqref{eq:CI.4}.
The $+$descendants, in fact, define the complete part of the meander path $h_0$ after $\mathcal{O}$, from $\mathcal{O}$ to $v_+^0$\,.
The $-$descendants define the part of the meander $h_0$ before $\mathcal{O}$ in reverse order, back from $\mathcal{O}$ to $v_-^0$\,.
Similarly the alternating descendants $\mathbf{s}=\ldots-+$ define the horizontal part, along the $h_1$-axis, from $\mathcal{O}$ to $v_+^0$\,.
The final choice $\mathbf{s}=\ldots+-$ of alternating descendants defines the horizontal part, backwards along the $h_1$-axis, from $\mathcal{O}$ to $v_-^0$\,.

A posteriori, of course, the Sturm permutation \eqref{eq:CI.7} of the $n$-dimensional Chafee-Infante attractor $\mathcal{CI}_n$ also determines the zero numbers and Morse indices
	\begin{equation}
	\label{eq:CI.3}
	z(v_\pm^j-v_\pm^k) = z(v_\pm^j-v_\mp^k)= z(v_\pm^j-\mathcal{O})=j_\pm \,, \qquad i(v_\pm^j)=j\,,
	\end{equation}
for all $0\leq j<k<n$.
See \eqref{eq:1.7a} and \eqref{eq:1.7b}.
Conversely, a priori knowledge of all signed zero numbers determines the Sturm permutation $\sigma=h_0^{-1}\circ h_1$\,, in any Thom-Smale complex. 
Indeed, the signs of  $z$ immediately determine the total order $h_0$ of all equilibria $v_k$\,, at $x=0$. 
Keeping the even/odd parity of $k$ in mind, the same signs determine the total order $h_1$ of all equilibria $v_k$\,, at $x=1$.

For general abstract signed regular complexes, however, matters are not that simple.
The prescribed hemisphere signs do not keep track of the relative boundary orders of all barycenter pairs $v_j\,,\, v_k$\,, explicitly.
This information is only available for those pairs $v_j\,,\, v_k$ for which one barycenter is in the cell boundary of the other.
(A posteriori, in other words, these are the heteroclinic pairs $v_k \leadsto v_j$ in the resulting Sturm attractor.)
How to extend this partial order to the two total orders $h_0\,, h_1$\,, uniquely for each of them, was the main result of the present paper.
See theorem \ref{thm:1.2}.

To illustrate this point further, we turn to 3-ball Sturm attractors $\mathcal{A}_f$ next. A purely geometric characterization of their signed hemisphere decompositions \eqref{eq:1.8}--\eqref{eq:1.11} has been achieved in \cite{firo3d-1, firo3d-2}; see also \cite{firo3d-3} for many examples. 
Dropping all Sturmian PDE interpretations, we defined 3-cell templates, abstractly, in the class of signed regular cell complexes $\mathcal{C}^s$ and without any reference to PDE terminology or to dynamics.
Recall fig.~\ref{fig:1.0}(b) for a first illustration.

\begin{defi}\label{def:5.1}
A finite signed regular cell complex $\mathcal{C}^s = \bigcup_{v \in \mathcal{E}} c_v$ is called a \emph{3-cell template} if the following four conditions all hold for the hemispheres $\Sigma_\pm^j=\Sigma_\pm^j(\mathcal{O})$ and descendants $v^j=v^j(\mathbf{s})$ of $\mathcal{O}$, according to definition \ref{def:1.1}.
	\begin{itemize}
	\item[(i)] $\mathcal{C}^s = \mathrm{clos}\, c_{\mathcal{O}}= S^2 \,\dot{\cup}\, c_{\mathcal{O}}$ is the closure of a single 3-cell $c_{\mathcal{O}}$\,.
	\item[(ii)] The 1-skeleton $(\mathcal{C}^s)^1$ of $\mathcal{C}^s$ possesses a \emph{bipolar orientation} from a pole vertex $\mathbf{N}:=\Sigma_-^0$ (North) to a pole vertex $\mathbf{S}:=\Sigma_+^0$ (South), with two disjoint directed \emph{meridian paths} $\mathbf{EW}:=\Sigma_-^1$ (EastWestern) and $\mathbf{WE}:=\Sigma_+^1$ (WestEastern) from $\mathbf{N}$ to $\mathbf{S}$.
The meridians decompose the boundary sphere $S^2$ into remaining hemisphere components $\Sigma_-^2:=\mathbf{W}$ (West) and $\Sigma_+^2:=\mathbf{E}$ (East).
	\item[(iii)] Edges are directed towards the meridians, in $\Sigma_-^2$\,, and away from the meridians, in $\Sigma_+^2$\,, at edge boundary vertices on the meridians other than the poles $\Sigma_\pm^0$\,.
	\item[(iv)] For $\iota=0,1$, let $w_\pm^\iota$ denote the first descendants $v^2(\mathbf{s})$ of $\mathcal{O}$, as defined in \eqref{eq:1.22}--\eqref{eq:1.25}.
Then the boundaries of the 2-cells of $w_-^\iota$ and of $w_+^{1-\iota}$ overlap in at least one edge, along the appropriate meridian $\Sigma_\pm^1$\,, between the respective second descendants $v^1(\mathbf{s})$ of $\mathcal{O}$.
	\end{itemize}
\end{defi}

We recall here that an edge orientation of the 1-skeleton $(\mathcal{C}^s)^1$ is called bipolar if it is without directed cycles, and with a single ``source'' vertex $\mathbf{N}$ and a single ``sink'' vertex $\mathbf{S}$ on the boundary of $\mathcal{C}^s$.
Here ``source'' and ``sink'' are understood, not dynamically but, with respect to edge direction.
The edge orientation of any 1-cell $c_v$ runs from $\Sigma_-^0(v)$ to $\Sigma_+^0(v)$.
The most elementary hemi-``sphere'' decomposition of 1-cells, in other words, can simply be viewed as an edge orientation.
Bipolarity is a local and global compatibility condition for these orientations which, in particular, forbids directed cycles.

By definition \ref{def:1.1} of descendants, the 2-cells $\mathbf{NE}$ of $w_-^0$ and $\mathbf{SW}$ of $w_+^1$  denote the unique faces in $\mathbf{W}$, $\mathbf{E}$ which contain the first, last edge of the meridian $\mathbf{WE}$ in their boundary, respectively.
In definition \ref{def:5.1}(iv), the boundaries of $\mathbf{NE}$ and $\mathbf{SW}$ are required to overlap in at least one shared edge along that meridian $\mathbf{WE}$.

Similarly, the 2-cells $\mathbf{NW}$ of $w_-^1$ and $\mathbf{SE}$ of $w_+^0$  denote the unique faces in $\mathbf{W}$, $\mathbf{E}$, respectively, which contain the first, last edge of the other meridian $\mathbf{EW}$ in their boundary, respectively.
The boundaries of $\mathbf{NW}$ and $\mathbf{SE}$ are required to overlap in at least one shared edge along that meridian $\mathbf{EW}$.

The main result of \cite{firo3d-1, firo3d-2}, in our language of descendants, reads as follows. 

\begin{thm}\label{thm:5.2}
\emph{\cite[theorems~1.2 and~2.6]{firo3d-2}}.
A finite signed regular cell complex $\mathcal{C}^s$ coincides with the signed Thom-Smale dynamic complex $c_v= W^u(v) \in \mathcal{C}^s_f$ of a 3-ball Sturm attractor $\mathcal{A}_f$ if, and only if, $\mathcal{C}^s$ is a 3-cell template.
\end{thm}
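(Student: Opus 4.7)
The plan is to establish the ``only if'' and ``if'' directions separately, with Theorem \ref{thm:1.2} serving as the bridge between the signed cell data and the boundary orders $h_0,h_1$ that encode the Sturm permutation $\sigma=h_0^{-1}\circ h_1$. For \emph{necessity}, suppose $\mathcal{C}^s=\mathcal{C}^s_f$ arises from a 3-ball Sturm attractor $\mathcal{A}_f=\mathrm{clos}\,W^u(\mathcal{O})$ with $i(\mathcal{O})=3$. Condition (i) is immediate from the 3-ball assumption. Condition (ii) is just the hemisphere dictionary \eqref{eq:1.13}--\eqref{eq:1.14}: the poles $\mathbf{N},\mathbf{S}$ are the two endpoints of the fastest unstable manifold $W^1(\mathcal{O})$, the meridians $\mathbf{EW},\mathbf{WE}$ bound $W^2(\mathcal{O})$, and the bipolar edge orientation on the 1-skeleton is produced by orienting every 1-cell $c_v$ from $\Sigma_-^0(v)$ to $\Sigma_+^0(v)$; acyclicity follows from the existence of a Lyapunov function and transitivity of $\leadsto$. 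Condition (iii) is read off from the fact that a 2-cell $c_v\subset\Sigma_\pm^2(\mathcal{O})$ sits on the $\mp$-side of each of its boundary meridian edges, combined with the definition of its 1-descendants at the two ends of $W^1(v)$. Condition (iv) is the central payoff of Theorem \ref{thm:1.2}: the $h_\iota$-neighbors $w^\iota_\pm$ of $\mathcal{O}$ coincide with the first descendants $v^2(\mathbf{s})$, and the consecutive entries of the boundary orders $h_0, h_1$ force the corresponding 2-cells to share at least one edge along the appropriate meridian.

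For \emph{sufficiency}, given an abstract 3-cell template $\mathcal{C}^s$, I would apply Theorem \ref{thm:1.2} recursively, descending through the cells of Morse indices $3,2,1,0$, to extract well-defined total orders $h_0$ and $h_1$ purely from the signed hemisphere decomposition of $\mathcal{C}^s$. The task is then to verify that $\sigma:=h_0^{-1}\circ h_1$ is a \emph{dissipative Morse meander} in the sense of \cite{firo99}: the normalizations $\sigma(1)=1,\ \sigma(N)=N$ follow from the identification of the poles $\mathbf{N},\mathbf{S}$ as the extremal vertices at both $x=0$ and $x=1$; nonnegativity of the recursive Morse indices \eqref{eq:1.7b} follows from the cell dimensions in $\mathcal{C}^s$; and, crucially, the Jordan (non-selfintersection) property of the formal meander arcs must be extracted from the bipolar edge orientation and the overlap condition (iv). Once $\sigma$ is certified as a dissipative Morse meander, the realization results of \cite{firo00,firo99} produce a Sturm attractor $\mathcal{A}_f$ with Sturm permutation $\sigma_f=\sigma$. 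It remains to show that $\mathcal{C}^s_f=\mathcal{C}^s$ as signed complexes; this I would do inductively on cell dimension, comparing the unique descendant staircase \eqref{eq:2.8} on the dynamical side with the template's recursive equatorial bisection, and using the zero-number recursion \eqref{eq:1.7c} to match hemisphere signs.

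The hard part will be the Jordan property of $\sigma$ in the sufficiency direction. Conditions (i)--(iii) of Definition \ref{def:5.1} control the 1-skeleton and attach signs to the 2-cells, but they do not, on their own, prevent the upper or lower arcs of $\sigma$ from crossing. The content of condition (iv) is precisely what rules this out, by forcing the adjacencies of the four corner faces $\mathbf{NE},\mathbf{NW},\mathbf{SE},\mathbf{SW}$ to interlock coherently along the two meridians. My proposal is to realize $S^2=\partial c_\mathcal{O}$ as two planar disks $\mathbf{W},\mathbf{E}$ glued along $\mathbf{EW}\cup\mathbf{WE}$, read off the meander arcs of $\sigma$ as the boundary walks of the planar dual in each hemisphere, and then check non-crossing separately for arcs within each hemisphere (using the bipolar planar structure) and for arcs that transit the meridians (using (iv) to align the $\mathbf{W}$-nest with the $\mathbf{E}$-nest along a common edge). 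A parallel, and perhaps cleaner, route is to verify the combinatorial meander axioms of \cite{firo99} directly from the sign-recursions \eqref{eq:1.7b}--\eqref{eq:1.7c} applied to the candidate $h_0, h_1$ supplied by Theorem \ref{thm:1.2}, treating (iv) as the dynamical analogue of Wolfrum's blocking condition that prevents a would-be crossing at the topmost Morse level.
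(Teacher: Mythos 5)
You should first be aware that this paper does not prove theorem~\ref{thm:5.2} at all: it is imported from \cite[theorems~1.2 and~2.6]{firo3d-2} (with the necessity direction in \cite[theorem~4.1]{firo3d-1}), and section~\ref{sec5} only recalls why conditions (i) and (ii) of definition~\ref{def:5.1} are necessary and then illustrates, by counterexample, how (iii) and (iv) can fail. So there is no in-paper proof to compare against; what can be judged is whether your outline would close the argument. Its architecture is the right one — necessity via the hemisphere dictionary \eqref{eq:1.13}--\eqref{eq:1.14} and the descendants of $\mathcal{O}$; sufficiency via the SZS-pair, verification that $\sigma=h_0^{-1}\circ h_1$ is a dissipative Morse meander, realization by \cite{firo99, firo00}, and a final matching of $\mathcal{C}^s_f$ with $\mathcal{C}^s$ — but two of the steps are genuinely open as you have left them.

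First, the Jordan property is acknowledged as ``the hard part'' and then replaced by two candidate strategies rather than an argument; in the cited references this step rests on the full planar realization machinery of \cite{firo08, firo09, firo10} applied to the two hemisphere disks, together with a careful interleaving of the two planar meanders across the meridians, none of which is supplied. Relatedly, the claim that nonnegativity of the Morse numbers \eqref{eq:1.7b} ``follows from the cell dimensions in $\mathcal{C}^s$'' is circular at that stage: the $i_k$ are defined combinatorially from the candidate $\sigma$, and their identification with the cell dimensions of the template is part of what must be proved. Second, and more seriously, your plan assigns condition (iv) the job of securing the meander property and treats the final matching $\mathcal{C}^s_f=\mathcal{C}^s$ as a routine induction. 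The paper's own examples show that the difficulty is distributed differently: in fig.~\ref{fig:5.3} a complex violating only condition (iii) still produces Hamiltonian paths \emph{and} a genuine dissipative Morse meander \eqref{eq:5.5}, and the construction fails only at the matching stage, where the realized complex turns out not even to be a 3-ball. Hence the template conditions must be invoked essentially inside the matching step — the deepest part of the sufficiency proof and the one your outline says least about — while fig.~\ref{fig:5.4} exhibits the complementary failure mode in which violating (iv) destroys the meander property. Until both of these steps are carried out, the proposal is a plausible roadmap rather than a proof.
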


In \cite[theorem~4.1]{firo3d-1} we proved that the signed Thom-Smale complex $\mathcal{C}^s$:= $\mathcal{C}^s_f$ of any Sturm 3-ball $\mathcal{A}_f$ indeed satisfies properties (i)--(iv) of definition~\ref{def:1.1}.
In our example of fig.~\ref{fig:1.0} this simply means the passage (a) $\Rightarrow$ (b).
In general, the 3-cell property~(i) of $c_{\mathcal{O}} = W^u(\mathcal{O})$ is obviously satisfied.
The bipolar orientation~(ii) of the edges $c_v$ of the 1-skeleton $(\mathcal{C}^s)^1$ is a necessary condition, for Sturm signed Thom-Smale complexes $\mathcal{C}^s=\mathcal{C}^s_f$\,. 
Indeed, acyclicity of the orientation of edges $c_v$\,, alias the one-dimensional unstable manifolds $c_v = W^u(v)$ of $i(v)=1$ saddles $v$, simply results from the strictly monotone $z=0$ ordering of each edge $c_v$\,: 
from the lowest equilibrium vertex $\Sigma_-^0 (v)$ in the closure $\bar{c}_v$ to the highest equilibrium vertex $\Sigma_+^0(v)$. 
The ordering is uniform for $0 \leq x \leq 1$, and holds at $x \in \{0,1\}$, in particular.
The poles $\mathbf{N}$ and $\mathbf{S}$ indicate the lowest and highest equilibrium, respectively, in that order.
Again we refer to fig.~\ref{fig:1.0} for an illustrative example.

In our present language, properties (i) and (ii) thus describe the 3-cell template as a hemisphere decomposition of the boundary $\Sigma^2 = \partial c_\mathcal{O}$ of the single 3-cell $\mathcal{O}$.
The meridian cycle is the boundary $\Sigma^1$ of the two-dimensional fast unstable manifold $W^2(\mathcal{O})$.
In addition, property (ii) compatibly concatenates the 1-cell orientations, equivalent to the strong monotonicity of the defining heteroclinic orbits in each cell, to a global bipolar orientation of the 1-skeleton $(\mathcal{C}^s)^1$.

Properties (iii) and (iv) are far less obvious constraints for an abstract signed regular 3-cell complex $\mathcal{C}^s$ to qualify as a signed 3-cell Thom-Smale complex $\mathcal{C}^s = \mathcal{C}^s_f$ of Sturm type.

The main result of our present paper, theorem \ref{thm:1.2}, determines the boundary paths $h_0, h_1$ from a signed 3-cell template $\mathcal{C}^s$ which is already known to be the signed Thom-Smale complex $\mathcal{C}^s=\mathcal{C}_f^s$ of a 3-ball Sturm attractor $\mathcal{A}_f$\,. 
In our example, this describes the passage from fig.~\ref{fig:1.0}(b) to fig.~\ref{fig:1.0}(d). 
We describe an equivalent practical simplification of this construction next, in terms of an SZS-pair $(h_0, h_1)$ of Hamiltonian paths $h_\iota$: $\lbrace 1, \ldots , N\rbrace \rightarrow \mathcal{E}$; see  \cite[section~2]{firo3d-3} for further details.

To prepare our construction, we first consider planar regular cell complexes $\mathcal{C}$, abstractly, with a bipolar orientation of the 1-skeleton $\mathcal{C}^1$.
Here bipolarity requires that the unique poles $\mathbf{N}$ and $\mathbf{S}$ of the orientation are located at the boundary of the embedded regular complex $\mathcal{C} \subseteq \mathbb{R}^2$.

To traverse the vertices $v \in \mathcal{E}$ of a planar complex $\mathcal{C}$, in two different ways, we construct a pair of directed Hamiltonian paths
	\begin{equation}
	h_0, h_1: \quad \lbrace 1, \ldots , N \rbrace \rightarrow \mathcal{E}
	\label{eq:5.1}
	\end{equation}
as follows.
Let $\mathcal{O}$ indicate any source, i.e. the barycenter of any 2-cell  face $c_{\mathcal{O}}$ in $\mathcal{C}$.
(We temporarily deviate from the standard 3-ball notation, here, to emphasize analogies with the passage of $h_\iota$ through a 3-cell.)
By planarity of $\mathcal{C}$ the bipolar orientation of $\mathcal{C}^1$ defines unique extrema on the boundary circle $\partial c_{\mathcal{O}}$ of each 2-cell $c_\mathcal{O}$.
Let $w_-^0$ denote the barycenter on $\partial c_{\mathcal{O}}$ of the first edge to the right of the minimum, and $w_+^0$ the first barycenter to the left of the maximum.
See fig.~\ref{fig:5.1}.
Similarly, let $w_-^1$ be the first barycenter to the left of the minimum, and $w_+^1$ first to the right of the maximum.
Then the following definition serves as our practical construction recipe for the pair $(h_0,h_1$).

\begin{figure}[t!]
\centering \includegraphics[width=0.75\textwidth]{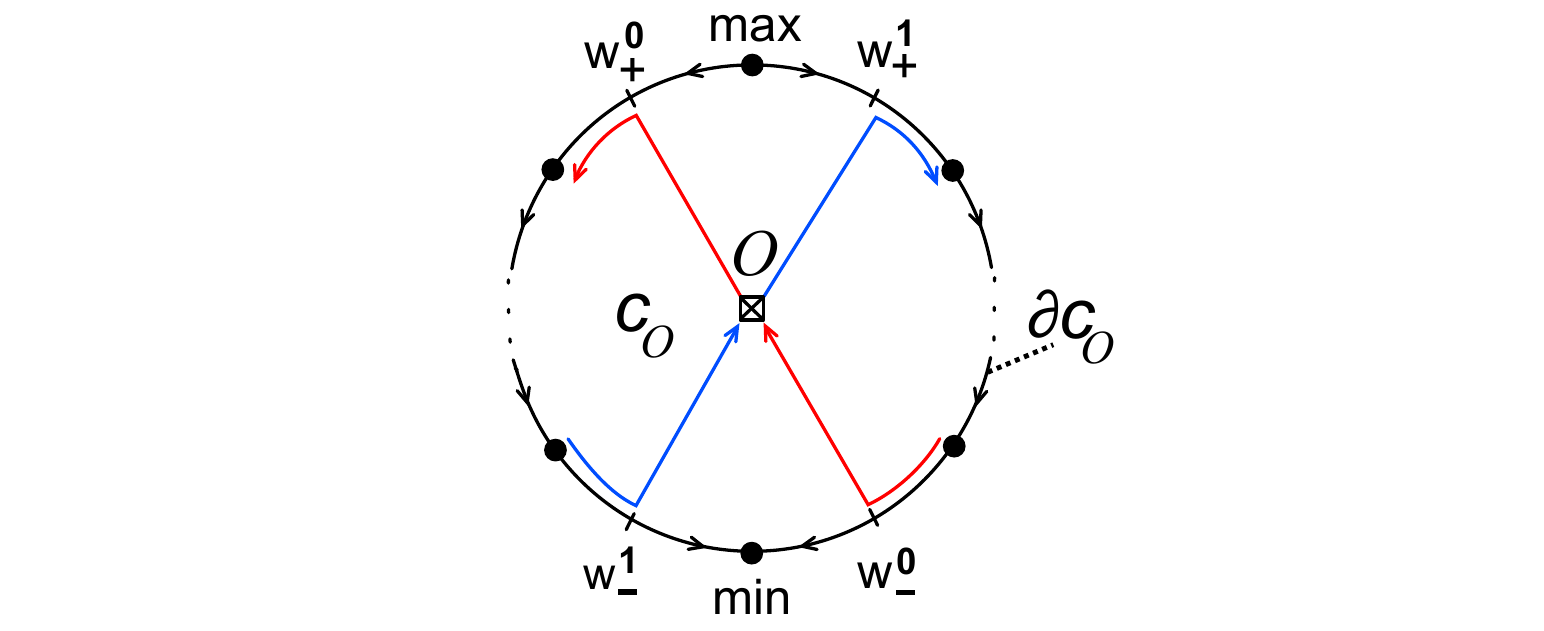}
\caption{\emph{
Traversing a face vertex $\mathcal{O}$ by a ZS-pair $h_0, h_1$\,.
Note the resulting shapes ``Z'' of $h_0$ (red) and ``S'' of $h_1$ (blue).
The paths $h_\iota$ may also continue into adjacent neighboring faces, beyond $w_\pm^\iota$\,, without turning into the face boundary $\partial c_{\mathcal{O}}$.
}}
\label{fig:5.1}
\end{figure}

\begin{defi}\label{def:5.3}
The bijections $h_0, h_1$ in \eqref{eq:5.1} are called a \emph{ZS-pair} $(h_0, h_1)$ in the finite, regular, planar and bipolar cell complex $\mathcal{C} = \bigcup_{v \in \mathcal{E}} c_v$ if the following three conditions all hold true:
	\begin{itemize}
	\item[(i)] $h_0$ traverses any face $c_\mathcal{O}$ from $w_-^0$ to $w_+^0$;
	\item[(ii)] $h_1$ traverses any face $c_\mathcal{O}$ from $w_-^1$ to $w_+^1$
	\item[(iii)] both $h_\iota$ follow the same bipolar orientation of the 1-skeleton $\mathcal{C}^1$, unless defined by (i), (ii) already.
	\end{itemize}
We call $(h_0,h_1)$ an \emph{SZ-pair}, if $(h_1, h_0)$ is a ZS-pair, i.e. if the roles of $h_0$ and $h_1$ in the rules (i) and (ii) of the face traversals are swapped.
\end{defi}

Properties (i)-(iii) of definition \ref{def:5.3} of a ZS-pair $(h_0,h_1)$ are equivalent to our present theorem \ref{thm:1.2}, in the language of descendants.
Indeed, we just have to define the signed hemisphere decomposition of each planar face $c_\mathcal{O}$ such that $\Sigma_-^1(\mathcal{O})$ appears to the right of the boundary minimum $\Sigma_-^0(\mathcal{O})$ or, equivalently, of the boundary maximum $\Sigma_+^0(\mathcal{O})$. 
Similarly, $\Sigma_+^1(\mathcal{O})$ appears to the left; see fig.~\ref{fig:5.1}. 

Of course, this choice, together with the bipolar orientation and the boundary extrema on each cell, identifies the abstract planar regular cell complex $\mathcal{C}$ as a signed regular cell complex $\mathcal{C}^s$, with certain global rules on the cell signatures.
Indeed, bipolarity serves as an additional global constraint, necessarily satisfied by planar Thom-Smale complexes of Sturm type.
In addition, note how shared edges between adjacent faces receive opposite signatures from either face.
For an SZ-pair, in contrast, we just have to swap the signature roles of $\Sigma_\pm^1(\mathcal{O})$, in every 2-cell. 
The planar trilogy \cite{firo08, firo09, firo10} contains ample material and examples on the planar case.

After these preparations we can now return to the general 3-cell templates $\mathcal{C}^s$ of definition \ref{def:5.1} and define the SZS-pair $(h_0,h_1)$ associated to $\mathcal{C}^s$.

\begin{defi}\label{def:5.4}
Let $\mathcal{C}^s = \bigcup_{v \in \mathcal{E}} c_v$ be a 3-cell template with oriented 1-skeleton $(\mathcal{C}^s)^1$, poles $\mathbf{N}, \mathbf{S}$, hemispheres $\mathbf{W}, \mathbf{E}$, and meridians $\mathbf{EW}$, $\mathbf{WE}$.
A pair $(h_0, h_1)$ of bijections $h_\iota$: $ \lbrace 1, \ldots , N \rbrace \rightarrow \mathcal{E}$ is called \emph{the SZS-pair assigned to} $\mathcal{C}^s$ if the following conditions hold.
\begin{itemize}
	\item[(i)] The restrictions of range $h_\iota$ to $\mathrm{clos}\,\mathbf{W}$ form an SZ-pair $(h_0, h_1)$, in the closed Western hemisphere.
The analogous restrictions form a ZS-pair $(h_0,h_1)$ in the closed Eastern hemisphere $\mathrm{clos}\,\mathbf{E}$.
See definitions \ref{def:5.1} and \ref{def:5.3}.
	\item[(ii)] In the notation of definition \ref{def:5.1}(iv) for the descendants $w_\pm^\iota$ of $\mathcal{O}$, and for each $\iota \in \{0,1\}$, the permutation $h_\iota$ traverses $w_-^\iota, \mathcal{O}, w_+^\iota$\,, successively.
	\end{itemize}
The swapped pair $(h_1,h_0)$ is called \emph{the ZSZ-pair of} $\mathcal{C}^s$.
\end{defi}

See fig.~\ref{fig:1.0} for a specific example.
Condition (i) identifies the closures of the open hemispheres $\mathbf{W}=\Sigma_-^2(\mathcal{O})$ and $\mathbf{E}=\Sigma_+^2(\mathcal{O})$ as the signed Thom-Smale complexes of planar Sturm attractors.
Note how opposite hemispheres receive opposite planar orientation, in fig.~\ref{fig:1.0}(b).
As a consequence, any shared meridian edge $c_v$ in $\Sigma_\pm^1(\mathcal{O})$ receives the same sign from the planar orientation of its two adjacent faces in either signed hemisphere.

Given the Sturm signed Thom-Smale complex of fig.~\ref{fig:1.0}(b), with the orientation of the 1-skeleton determined by the poles $\mathbf{N}=1$ and $\mathbf{S}=9$, we thus arrive at the SZS-pair $(h_0,h_1)$ indicated there. 
This determines the boundary orders of the equilibria in fig.~\ref{fig:1.0}(d).
The meander in fig.~\ref{fig:1.0}(c) is then based on the Sturm permutation $\sigma=h_0^{-1}\circ h_1$\,, as usual.

In summary, theorem \ref{thm:1.2} and, for 3-cell templates equivalently, definition \ref{def:5.4} reconstruct the same generating Hamiltonian paths $h_0, h_1$\,, and hence the same generating Sturm permutation, of any 3-cell template.

\begin{figure}[p!]
\centering \includegraphics[width=1.0\textwidth]{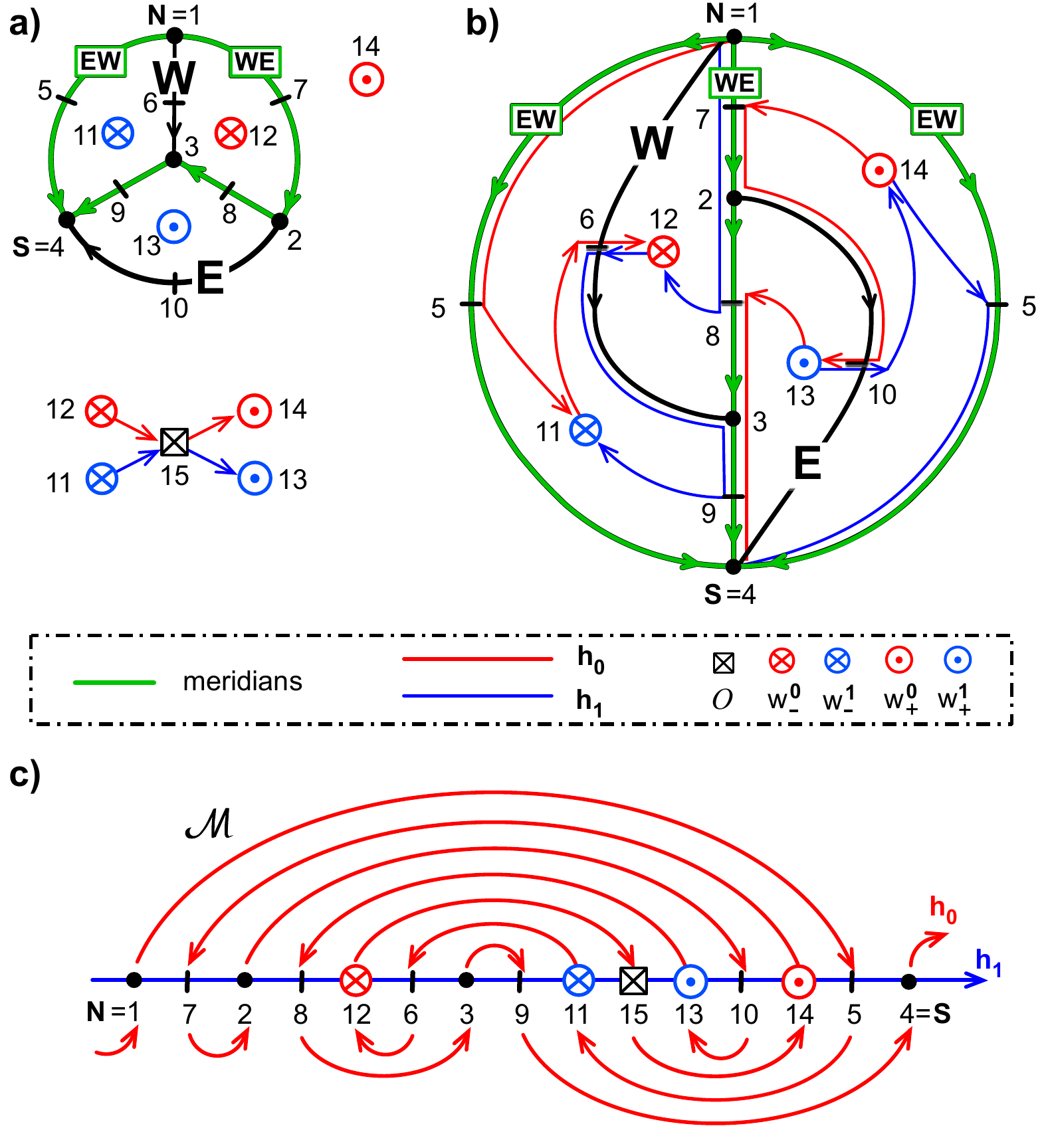}
\caption{\emph{
The Sturm tetrahedron 3-ball with 2+2 faces in the hemispheres $\mathbf{W} = \Sigma_-^2(\mathcal{O})$ and $\mathbf{E} = \Sigma_+^2(\mathcal{O})$. 
(a) Equilibrium labels $\mathcal{E}=\{1,\ldots, 15\}$, bipolar orientation of the 1-skeleton with poles $\mathbf{N}=1$, $\mathbf{S}=4$ which are edge-adjacent along the meridian circle (green), and hemisphere decomposition. 
The face with barycenter 14 is drawn as the exterior, in the 1-point compactification of the plane.
The meridians are indicated as $\mathbf{EW} = \Sigma_-^1(\mathcal{O})$ and $\mathbf{WE} = \Sigma_+^1(\mathcal{O})$. 
See the legend for the predecessors and successors $w_\pm^\iota$ of $\mathcal{O}=15$.
(b) The SZS-pair of Hamiltonian paths $h_0$ (red) and $h_1$ (blue). 
Here we identify the right and left copies of the meridian $\mathbf{EW}$.
See definition \ref{def:5.3}, for the $h_\iota$ predecessors and successors $w_\pm^\iota$ of $\mathcal{O}$, and definition \ref{def:5.4}, for the remaining paths in the respective hemispheres. 
See also \eqref{eq:5.2} for the resulting paths $h_0, h_1$\,.
(c) The dissipative Morse meander defined by the label-independent Sturm permutation $\sigma=h_0^{-1}\circ h_1$; see also \eqref{eq:5.3}.
}}
\label{fig:5.2}
\end{figure}

In the general case, not restricted to 3-balls, we have assumed that the signed regular complex $\mathcal{C}^s=\mathcal{C}^s_f$ is presented as a signed Thom-Smale complex of Sturm type, from the start.
In particular, all hemisphere signs were determined by the signed zero numbers.
We have then described the precise relation between that signed complex $\mathcal{C}^s=\mathcal{C}^s_f$ and the boundary orders, at $x=\iota=0,1$, of the paths $h_\iota$ traversing the complex.
In particular we have proved that the signed Thom-Smale complex $\mathcal{C}^s=\mathcal{C}^s_f$ determines the Sturm permutation $\sigma=\sigma_f$\,, uniquely.
Conversely, abstract Sturm permutations determine their signed Thom-Smale complex, uniquely.
See \cite{firo96, firo00, firo13, firo3d-1}.
This provides a 1-1 correspondence between Sturm permutations and their signed Thom-Smale complexes.

In general, however, we are still lacking a purely geometric characterization of those signed regular cell complexes $\mathcal{C}^s$ which arise as signed Thom-Smale complexes $\mathcal{C}^s=\mathcal{C}^s_f$ of Sturm type. 
Indeed, the characterization by theorem \ref{thm:5.2} covers 3-cell templates $\mathcal{O}$, only.

Three difficulties may arise in an attempt to realize a given signed regular cell complex $\mathcal{C}^s$ as a Sturm complex $\mathcal{C}^s=\mathcal{C}^s_f$\,. 
First, the recipe of theorem \ref{thm:1.2} might fail to provide Hamiltonian paths $h_0,h_1$\,. 
For example, the same barycenter $w$ of an $(n-1)$-cell may be identified as the successor $w=w_+^\iota$ of the barycenters $\mathcal{O}$ and $\mathcal{O}'$ of two different $n$-cells, for the same directed path $h_\iota$\,. 
Or that ``path'' might turn out to contain cyclic connected components, instead of defining a single Hamiltonian path which visits all barycenters. 
Second, even if both paths turn out to be Hamiltonian, from ``source'' $\mathbf{N}$ to ``sink'' $\mathbf{S}$, the resulting permutation $\sigma = h_0^{-1}\circ h_1$ may fail to define a Morse meander -- precluding any realization in the Sturm PDE setting \eqref{eq:1.1}. 
Third, and even if we prevail against both obstacles, the lucky original signed regular complex $\mathcal{C}^s$ may fail to coincide, isomorphically, with the signed Thom-Smale complex $\mathcal{C}^s_f$ associated to the thus constructed Sturm permutation $\sigma=\sigma_f$\,.

\begin{figure}[p!]
\centering \includegraphics[width=1.0\textwidth]{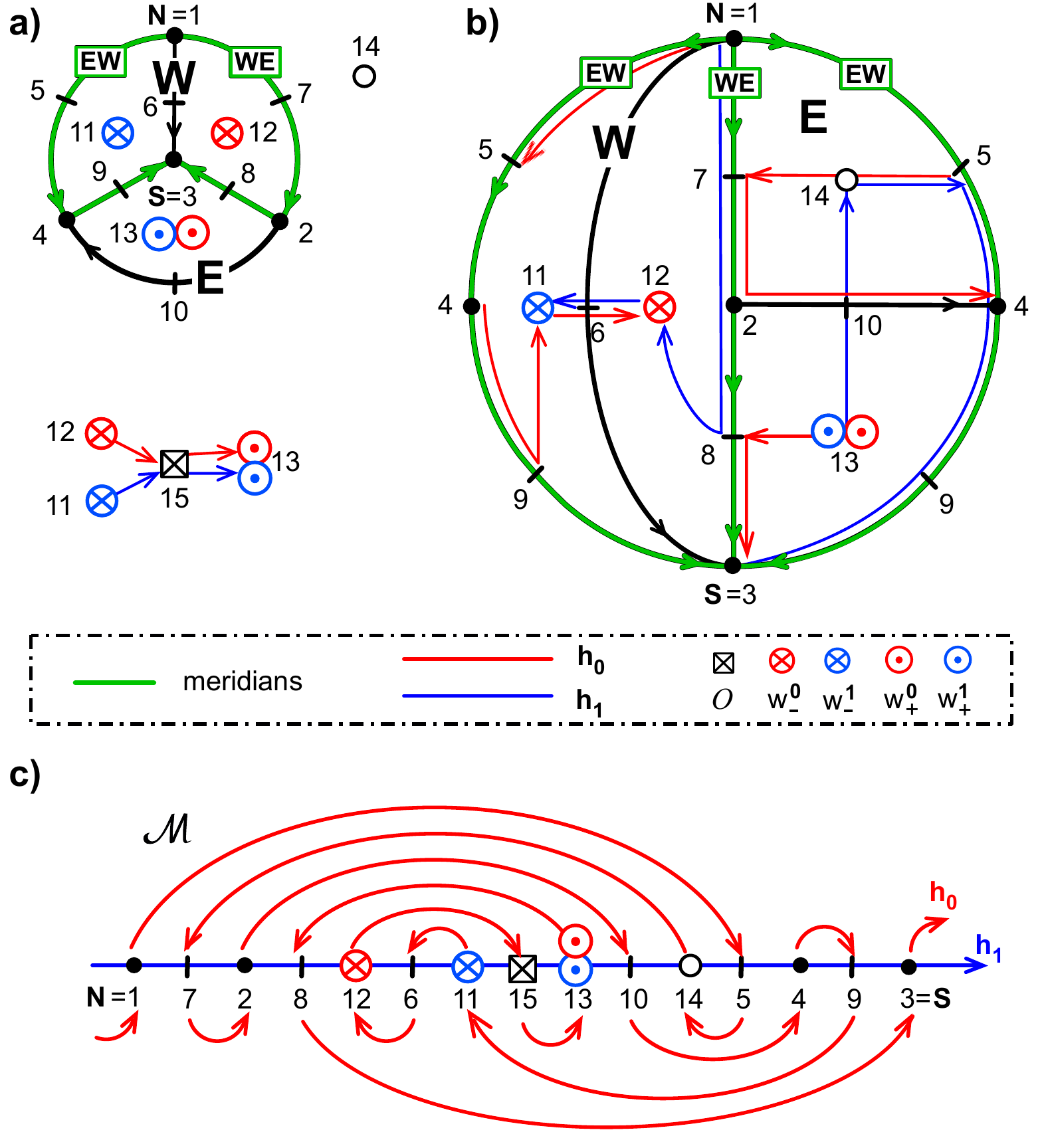}
\caption{\emph{
A signed regular tetrahedron 3-ball complex which is not Sturm. We use the same decomposition into hemispheres $\mathbf{W} = \Sigma_-^2(\mathcal{O})$ and $\mathbf{E} = \Sigma_+^2(\mathcal{O})$ with 2+2 faces as in fig.~\ref{fig:5.2}. Only the edge-adjacent poles have been replaced by $\mathbf{N}=1$, $\mathbf{S}=3$, which are not edge-adjacent along the meridian circle (green). 
(a) Adapted bipolar orientation of the 1-skeleton, and hemisphere decomposition. 
Only the orientation condition (iii) of definition \ref{def:5.1} is violated, necessarily, by the orientation of edge 10  in the hemisphere $\mathbf{E}$.
(b) The SZS-pair $h_0$ (red) and $h_1$ (blue), constructed according to definitions \ref{def:5.3} and \ref{def:5.4}, still provides Hamiltonian paths \eqref{eq:5.4}.
(c) The dissipative Morse meander defined by the label-independent Sturm permutation $\sigma=h_0^{-1}\circ h_1$; see also \eqref{eq:5.5}.
Violating definition \ref{def:5.1}(iii), the original signed regular tetrahedron 3-ball complex is not Sturm.
Therefore the Sturm permutation $\sigma$ necessarily fails to describe the original non-Sturm signed complex (a). 
Instead, $\sigma$ describes a Sturm signed Thom-Smale complex which is not a 3-ball.
}}
\label{fig:5.3}
\end{figure}

Let us bolster those nagging abstract doubts with three specific examples.
Our first example, fig.~\ref{fig:5.2}, recalls the unique Sturm tetrahedron 3-ball with 2+2 faces in the hemispheres $\Sigma_\pm^2 = \Sigma_\pm^2(\mathcal{O})$, alias $\mathbf{W}$ and $\mathbf{E}$; 
see the detailed discussion in \cite{firo3d-3}. 
For such a hemisphere decomposition of the 3-ball tetrahedron, there essentially exists only one signed Thom-Smale complex which complies with all requirements of  definition \ref{def:5.1}; see fig.~\ref{fig:5.2}(a). 
In particular, both, the edge-adjacent location, along the meridian circle, of the poles $\Sigma_\pm^0 = \Sigma_\pm^0(\mathcal{O})$, alias $\mathbf{N}$ and $\mathbf{S}$, and the bipolar orientation are then determined uniquely, up to geometric automorphisms of the tetrahedral complex and trivial equivalences.

In fig.~\ref{fig:5.2}(b) we construct the resulting SZS-pair of Hamiltonian paths $h_0\,,\,h_1$\,.
We follow the practical recipes  of definition \ref{def:5.4}(ii), for the $h_\iota$ predecessors and successors $w_\pm^\iota$ of $\mathcal{O}$, and of definition \ref{def:5.4}(i), for the remaining paths within the respective closed hemispheres.
With the labels $\mathcal{E}=\{1,\ldots, 15\}$ of equilibria in fig.~\ref{fig:5.2}, the resulting paths $h_\iota:\ \{1,\ldots, 15\}\rightarrow \mathcal{E}$ are
	\begin{equation}
	\begin{aligned}
	h_0: \,\, 1\;\; &\text{5 11 6 12 15 14 7 2 10 13 8 3 9 4}\,;\\
	h_1: \,\, 1\;\; &\text{7 2 8 12 6 3 9 11 15 13 10 14 5 4}\,.
	\end{aligned}
	\label{eq:5.2}
	\end{equation}
For the label-independent Sturm permutation $\sigma=h_0^{-1}\circ h_1$ we therefore obtain the dissipative Morse meander of fig.~\ref{fig:5.2}(c), for the 2+2 decomposed Sturm tetrahedron 3-ball:
	\begin{equation}
	\begin{aligned}
	\sigma 
	&=  \lbrace 1,\text{8, 9, 12, 5, 4, 13, 14, 3, 6, 11, 10, 7, 2, 15}\rbrace =\\
	&= \text{(2 8 14) (3 9) (4 12 10 6) (7 13)}\,.
	\end{aligned}
	\label{eq:5.3}
	\end{equation}

Our second example, fig.~\ref{fig:5.3}, starts from a minuscule variation (a) of the same signed tetrahedral 3-ball. 
We only move the South pole $\mathbf{S}$ away from the position 4, which is edge-adjacent to $\mathbf{N}=1$ along the meridian circle. The new, more ``symmetric'' location 3 of $\mathbf{S}$ is not edge-adjacent to $\mathbf{N}$ along the meridian circle.
We keep the 2+2 hemisphere decomposition unchanged, and only adjust the bipolarity of the 1-skeleton accordingly. 
Only the orientation of edge 9, from 3 to 4, has to be reversed to accommodate the misplaced South pole as an orientation sink.
By tetrahedral symmetry, we may keep the orientation of edge 10, from 2 to 4, without loss of generality.
Note however that any orientation of edge 10 now violates the orientation condition (iii) of definition \ref{def:5.1} in the hemisphere $\mathbf{E}=\Sigma_+^2$\,.
All other requirements of definition \ref{def:5.1}, including the overlap condition (iv), remain satisfied.

In fig.~\ref{fig:5.3}(b) we construct the resulting paths $h_0,h_1$ from the practical recipes  of definitions \ref{def:5.3} and \ref{def:5.4}, as before, with the usual labels of equilibria.
This time, we obtain
	\begin{equation}
	\begin{aligned}
	h_0: \,\, 1\;\; &\text{5 14 7 2 10 4 9 11 6 12 15 13 8 3}\,;\\
	h_1: \,\, 1\;\; &\text{7 2 8 12 6 11 15 13 10 14 5 4 9 3}\,.
	\end{aligned}
	\label{eq:5.4}
	\end{equation}
For the Sturm permutation $\sigma=h_0^{-1}\circ h_1$ we therefore obtain the dissipative Morse meander of fig.~\ref{fig:5.2}(c):
	\begin{equation}
	\begin{aligned}
	\sigma 
	&=  \lbrace 1,\text{4, 5, 14, 11, 10, 9, 12, 13, 6, 3, 2, 7, 8, 15}\rbrace =\\
	&= \text{(2 4 14 8 12) (3 5 11) (6 10) (7 9 13)}\,.
	\end{aligned}
	\label{eq:5.5}
	\end{equation}

The Sturm global attractor $\mathcal{A}_f$ which results from that Sturm permutation $\sigma=\sigma_f$\,, however, is not a tetrahedral 3-ball.
In fact, $\mathcal{A}_f$ is not a 3-ball at all.
We prove this indirectly: suppose $\mathcal{A}_f$ is a 3-ball with $\mathcal{O}=15$.
Consider the $h_0$-successor $w_+^0= 13$ of $\mathcal{O}=15$, with Morse index $i(13)=2$; see \eqref{eq:5.4} and fig.~\ref{fig:5.3}(c).
By corollary \ref{cor:4.4} in a 3-ball, the $h_0$-successor 13 of $\mathcal{O}=15$ must coincide with the $h_1$ most distant equilibrium from $\mathcal{O}=15$, in $\mathcal{E}_+^2(\mathcal{O})$. 
Since 2 is even, however, that $h_1$-last equilibrium in $\mathcal{E}_+^2(\mathcal{O})$ is easily identified by its label 14, in fig.~\ref{fig:5.3}(b).
This contradiction shows that $\sigma=\sigma_f$ from \eqref{eq:5.5} is not a tetrahedral 3-ball.
Alternatively to this indirect proof we could also have shown blocking of any heteroclinic orbit from $\mathcal{O}=15$ to the face equilibrium 14, based on zero numbers.

In fact we should have expected such failure: our construction of $h_0, h_1$ in theorem \ref{thm:1.2} is based on a signed cell complex $\mathcal{C}^s$ which is  assumed to be the signed Thom-Smale complex $\mathcal{C}_f^s$ of a Sturm global attractor $\mathcal{A}_f$\,. 
In the example of fig.~\ref{fig:5.3}, we kind of started from a Sturm 3-ball $\mathcal{A}_f$\,, albeit with a misplaced South pole $\mathbf{S}$ and an incorrect signature orientation of the cell $c_9$ in its signed complex.
Cooked up from the recipes of theorem \ref{thm:1.2} and definition \ref{def:5.4} by sheer luck, the Sturm permutation \eqref{eq:5.5} still described a Sturm global attractor $\mathcal{A}_g$, with an associated signed Thom-Smale complex $\mathcal{C}_g^s$\,.
However, $\mathcal{A}_g \neq \mathcal{A}_f$ turns out to be a geometrically different global attractor, and $\mathcal{C}_g^s \neq \mathcal{C}_f^s$\ is not even a 3-ball.

That failure should caution us against another premature temptation. 
It is true that each closed hemisphere $\Sigma_\pm^2(\mathcal{O})$ of any 3-ball Sturm attractor is a planar Sturm attractor, itself.
See definition \ref{def:5.1}(i), (ii).
However it is not true, conversely, that we may glue any pair of planar Sturm disks, with matching poles and boundaries, along the shared meridian boundary to form the 2-sphere boundary $\Sigma^2(\mathcal{O})$ of a Sturm 3-ball attractor.
In fact, the two disks also have to satisfy the mandatory compatibility constraints of definition \ref{def:5.1}(iii), (iv), on edge orientations and overlap, to define a Sturm 3-ball attractor.
In fig.~\ref{fig:5.3}, the two quadrangular closed disks $\mathrm{clos}\, \mathbf{W}$ and $\mathrm{clos}\, \mathbf{E}$, each with a single diagonal, are such a non-matching pair.
Indeed, the Eastern disk $\mathrm{clos}\, \mathbf{E}$ violates definition \ref{def:5.1}(iii).

 \begin{figure}[p!]
\centering \includegraphics[width=1.0\textwidth]{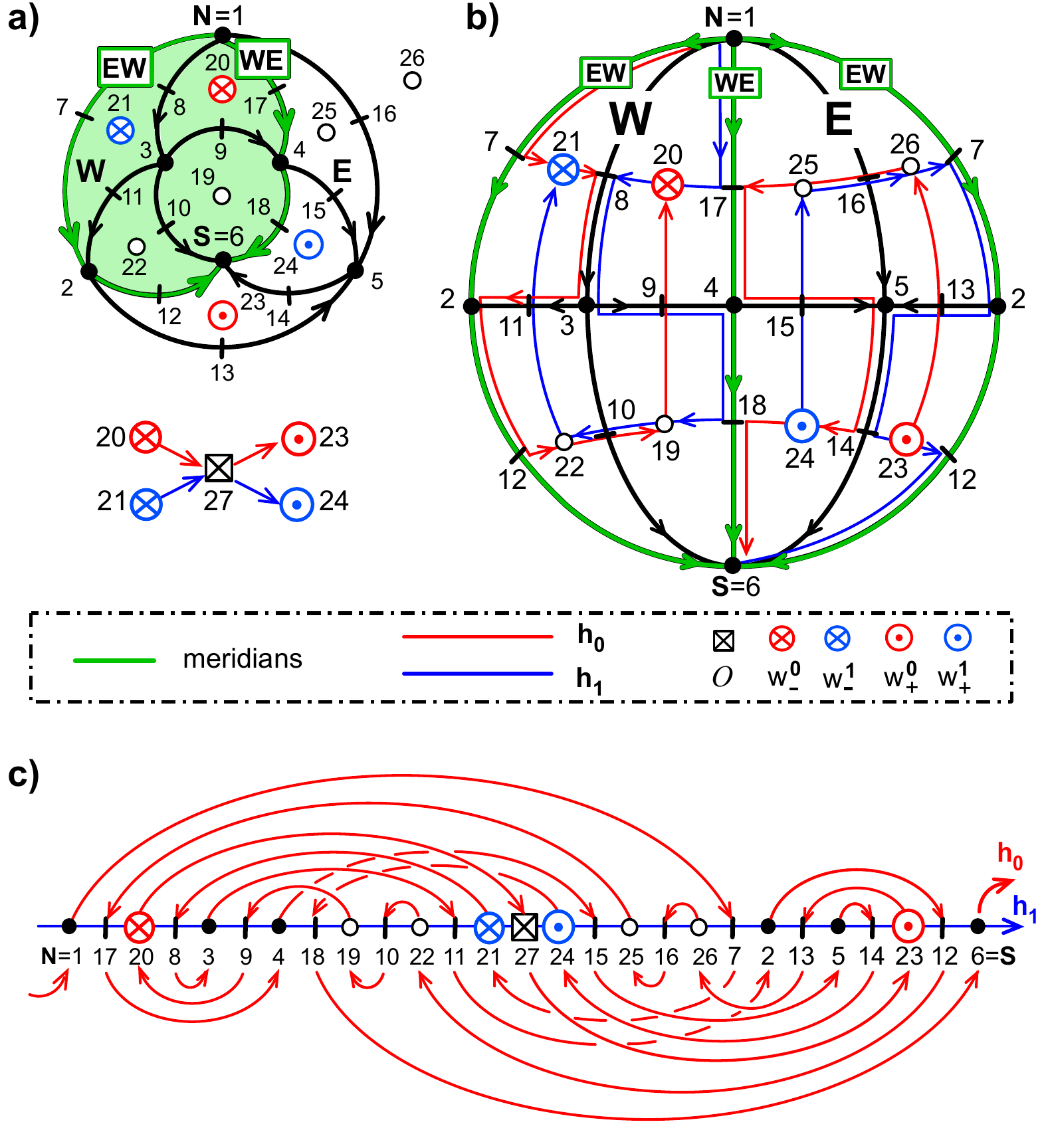}
\caption{\emph{
A signed regular octahedron 3-ball complex with antipodal poles $\mathbf{N}=1$ and $\mathbf{S}=6$. 
By \cite{firo14, firo3d-1, firo3d-3}, there does not exist any Sturm octahedron complex with antipodal poles. 
See figs.~\ref{fig:5.2}, \ref{fig:5.3} for our general setting and notation.
(a) Equilibria $\mathcal{E} =\{1,\ldots, 27\}$, bipolar orientation of the 1-skeleton, and hemisphere decomposition $\mathbf{W}, \mathbf{E}$ into 4+4 faces, one exterior. 
Only the overlap condition (iv) of definition \ref{def:5.1} is violated by the faces of the two pairs $w_-^\iota, w_+^{1-\iota}$\,, respectively.
(b) The SZS-pair $h_0$ (red) and $h_1$ (blue), constructed according to definitions \ref{def:5.3} and \ref{def:5.4}, provides Hamiltonian paths. 
See also \eqref{eq:5.6} for the resulting paths $h_0, h_1$\,.
(c) The involutive permutation $\sigma=h_0^{-1}\circ h_1$ of  \eqref{eq:5.7} is dissipative and Morse, but fails to define a meander. 
There are 16 self-crossings.
Therefore the signed regular octahedron 3-ball complex (a) with antipodal poles fails to define a Sturm signed Thom-Smale complex.
}}
\label{fig:5.4}
\end{figure}

Our third and final example, fig.~\ref{fig:5.4}, applies our path construction to an octahedral 3-ball. 
Fig.~\ref{fig:5.4}(a) prescribes a signed octahedron complex with diagonally opposite poles $\mathbf{N}=1$ and $\mathbf{S}=6$. 
In \cite{firo3d-1, firo3d-3}, however, we have already shown that there does not exist any signed Thom-Smale octahedral complex of Sturm type with diagonally opposite poles. 
See also \cite{firo14} for this surprising phenomenon.
So our construction is asking for trouble, again. 
To be specific we consider a symmetric decomposition into hemispheres $\mathbf{W}, \mathbf{E}$ with 4+4 faces, as indicated in fig.~\ref{fig:5.4}(a). 
The hemisphere splitting avoids direct edges between the meridians $\mathbf{EW}$ and $\mathbf{WE}$, like edge 10 in fig.~\ref{fig:5.3}, which would contradict definition \ref{def:5.1}(iii).
All edge orientations in the bipolar 1-skeleton are then determined uniquely by conditions (i)--(iii) of definition \ref{def:5.1}. 
Only the overlap condition (iv) is violated, this time. 
See fig.~\ref{fig:5.4}(b).

Without difficulties, the practical recipes of definitions \ref{def:5.3} and \ref{def:5.4} provide Hamiltonian paths $h_0,h_1$\,, as before, with the equilibrium labels indicated in fig.~\ref{fig:5.4}(a), (b): 
	\begin{equation}
	\begin{aligned}
	h_0: \text{1 7 21 8 3 11 2 12 22 10 19 9 20 27 23 13 26 16 25 17 4 15 5 14 24 18 6}\,;\\
	h_1: \text{1 17 20 8 3 9 4 18 19 10 22 11 21 27 24 15 25 16 26 7 2 13 5 14 23 12 6}\,.
	\end{aligned}
	\label{eq:5.6}
	\end{equation}
For the permutation $\sigma=h_0^{-1}\circ h_1$ we therefore obtain the involution
	\begin{equation}
	\begin{aligned}
	\sigma 
	&=  \lbrace 1,\text{ 20, 13, 4, 5, 12, 21, 26, 11, 10, 9, 6, 3, 14,}\\
	&\phantom{=\lbrace 1,\ \,}\text{ 25, 22, 19, 18, 17, 2, 7, 16, 23, 24, 15, 8, 27}\rbrace =\\
	&= \text{(2 20) (3 13) (6 12) (7 21) (8 26) (9 11) (15 25) (16 22) (17 19)}\,.
	\end{aligned}
	\label{eq:5.7}
	\end{equation}
This time however, due to the violation of the overlap condition in definition \ref{def:5.1}(iv), the permutation $\sigma$ does not define a meander.
See fig.~\ref{fig:5.4}(c) for the 16 resulting self-crossings generated by the permutation $\sigma$.

In conclusion we see how the recipe of theorem \ref{thm:1.2}, for the construction of the unique Hamiltonian boundary orders $h_0,h_1$ and the unique associated Sturm permutation $\sigma=h_0^{-1}\circ h_1$\,, works well for signed regular complexes $\mathcal{C}^s$ -- provided that these complexes are the signed Thom-Smale complex $\mathcal{C}^s=\mathcal{C}^s_f$ of a Sturm global attractor $\mathcal{A}_f$\,, already. 
In other words, there is a 1-1 correspondence between Sturm permutations and Sturm signed Thom-Smale complexes. 
For non-Sturm signed regular complexes, however, the construction recipe for  $h_0,h_1$ may fail to provide a Sturm permutation $\sigma=h_0^{-1}\circ h_1$\,. 
This was the case for the octahedral example of fig.~\ref{fig:5.4}. 
But even if the construction of a Sturm permutation $\sigma$ succeeds, by our recipe, the result will -- and must -- fail to produce the naively intended Sturm realization of a prescribed non-Sturm signed regular complex $\mathcal{C}^s$.
This was the case for the second tetrahedral example of fig.~\ref{fig:5.3}. 
The goal of a complete geometric description of all Sturm signed Thom-Smale complexes $\mathcal{C}^s_f$\,, as abstract signed regular complexes $\mathcal{C}^s$, therefore requires a precise geometric characterization of the fast unstable manifolds and the Sturmian signatures, on the cell level. 
Only for planar cell complexes and for 3-balls has that elusive goal been reached, so far.

%%%%%%%%%%%%%%%%%%%%%%%%%%%%%%%%%%%%%%%%%%%%%%%%%%%%%%%%%%%
%\newpage

\end{document}